\numberwithin{equation}{section}
\newtheorem{theorem}{Theorem}[section]
\newtheorem{lemma}[theorem]{Lemma}
\newtheorem{corollary}[theorem]{Corollary}
\newtheorem{remark}[theorem]{Remark}
\newtheorem{proposition}[theorem]{Proposition}
\newtheorem{definition}[theorem]{Definition}
\newtheorem{assumption}[theorem]{Assumption}
\renewcommand{\epsilon}{\varepsilon}
\renewcommand{\P}{\mathbb{P}}
\newcommand{\dd}{\,\mathrm{d}}
\renewcommand{\d}{\mathrm{d}}
\newcommand{\Id}{\,\mathrm{d}}
\newcommand{\R}{\mathbb{R}}
\newcommand{\N}{\mathbb{N}}
\newcommand{\E}{\mathbb{E}}
\newcommand{\indicator}[1]{\mathbbm{1}_{#1}}
\newcommand{\norm}[1]{\left\lVert#1\right\rVert}
\newcommand{\testfunctions}[1]{C_c^{\infty}(#1)}
\title[Diffusion-aggregation equations and their mean-field approximations]{Well-posedness of diffusion-aggregation equations with bounded kernels and their mean-field approximations}
\author[Chen]{Li Chen}
\address{Li Chen, University of Mannheim, Germany}
\email{chen@uni-mannheim.de}
\author[Nikolaev]{Paul Nikolaev}
\address{Paul Nikolaev, University of Mannheim, Germany}
\email{pnikolae@mail.uni-mannheim.de}
\author[Pr{\"o}mel]{David J. Pr{\"o}mel}
\address{David J. Pr{\"o}mel, University of Mannheim, Germany}
\email{proemel@uni-mannheim.de}
\date{\today}
\begin{document}

\begin{abstract}
  The well-posedness and regularity properties of diffusion-aggregation equations, emerging from interacting particle systems, are established on the whole space for bounded interaction force kernels by utilizing a compactness convergence argument to treat the non-linearity as well as a Moser iteration. Moreover, we prove a quantitative estimate in probability with arbitrary algebraic rate between the approximative interacting particle systems and the approximative McKean--Vlasov SDEs, which implies propagation of chaos for the interacting particle systems.
\end{abstract}

\maketitle

\noindent \textbf{Key words:} diffusion-aggregation equation, law of large numbers, interacting particle systems, McKean--Vlasov equations, non-linear non-local PDE, propagation of chaos.

\noindent \textbf{MSC 2010 Classification:} 35D30, 	35Q70, 60K35.


\section{Introduction}

Diffusion-aggregation equations and their associated interacting particle systems serve as well-suited mathematical models in various areas, such as physics, chemistry, biology, ecology, and social sciences. For instance, they are used to describe the behaviour of chemotaxis~\cite{KellerSegel1970,HillenPainter2009,Horstmann2004}, angiogenesis and swarm movement~\cite{Topaz2006}, flocking~\cite{HaLiu2009}, opinion dynamics~\cite{LORENZ_2007_bounded_confidence_survey, noorazar2020recent}, and cancer invasion~\cite{Domschke2014}. On the microscopic level, these systems are often modeled by interacting \(N\)-particle systems \(\mathbf{X}^N= (X^1,\ldots, X^N)\), given by stochastic differential equations of the form
\begin{equation}\label{eq: intro_particle_system}
  \Id X_t^{i} = -\frac{1}{N} \sum\limits_{j=1}^N k(X_t^{i} -X_t^{j}) \Id t + \sigma \Id B_t^{i}, \quad i=1,\ldots,N , \; \; \mathbf{X}_0^N \sim \overset{N}{\underset{i=1}{\otimes}} \rho_0,
\end{equation}
for $t\geq0$, starting from i.i.d. initial data, defined on a probability space \((\Omega, \mathcal{F},\P)\). On the macroscopic level, the corresponding systems are represented by the evolutions of the probability densities~\(\rho\) of the particles, which satisfy diffusion-aggregation equations. In general, these diffusion-aggregation equations are non-local, non-linear partial differential equations (PDEs). Passing from the microscopic to macroscopic models, involves to study the mean-field limit as \(N\to\infty\), cf. \cite{snitzman_propagation_of_chaos,CarilloChoiPil2014,JabinWangZhenfu2016,JabinEmanuel2014}.
In particular, this consists of showing the convergence of the empirical measures $\mu^N_t$ of the \(N\)-particle systems \(\mathbf{X}^N= (X^1,\ldots, X^N)\) for all \(t \ge 0\), where $\mu^N_t$ is defined as
\begin{equation*}
  \mu^N_t(\omega,A) := \frac{1}{N} \sum\limits_{i=1}^N \delta_{X^{i}_{t}(\omega)}  (A), \quad \quad  \omega \in \Omega
\end{equation*}
for a Borel set \(A\). 
Although mean-field interaction and its related PDEs is a classical topic, it is still a very active research field. Indeed, the case of global Lipschitz continuous interaction force kernels~\(k\) has been understood for many years, cf. \cite{McKean1967,snitzman_propagation_of_chaos,HaurayMischler2014}, e.g., by employing the coupling method, that is, comparing the particle \((X_t^{i},t \ge 0)\) to the solution \((Y_t^{i},t \ge 0)\) of the McKean--Vlasov stochastic differential equations (McKean--Vlasov SDEs)
\begin{align*}
  \begin{cases} \Id Y_t^{i} = -  (k* \mu_t)(Y_t^{i}) \Id t + \sigma \Id B_t^{i},  \quad i=1,\ldots,N , \; \; \mathbf{Y}_0^N=\mathbf{X}_0^N \\
  \mu_t = \mathrm{Law}(Y_t)
  \end{cases}
\end{align*}
for $t\geq0$, and, subsequently, showing the convergence \(\mu^N_t \to \mu_t\) as  \(N\to \infty\) for all \(t \ge 0\) in a suitable topology. The latter convergence is also referred to as ``propagation of chaos''. Consequently, the question regarding the well-posedness of McKean--Vlasov SDEs naturally arises in the context of mean-field theory, cf. \cite{Wang2018,rockner_DDSDE2021,barburoeckner2020,HuangPanpanWang2021,Wang2023}.
In many settings, the law \(\mu\) of the solution \((Y_t^{i},t \ge 0)\) possesses a probability density \(\rho\), which satisfies an associated Fokker--Planck equation. Therefore, one has access to PDE theory allowing to deal with the well-posedness of McKean--Vlasov SDEs. 


Motivated by various models arising especially in physics, which require bounded measurable or even singular interaction force kernels, an enormous amount of work has been dedicated to treat such irregular interaction force kernels. Initially, approaches to treat such irregular kernels were often based on compactness methods in combination with the martingale problems associated to the McKean--Vlasov SDEs, see e.g. \cite{Oeschlager1984,Hirofumi1987,Gartner1988,FournierJourdain2017,godinho_Keller_Segel2015,LiLeiLiu2019,LiLeiLiu2019}. More recently, even singular kernels, like the Coulomb potential \(x/|x|^s\) for \( s \ge 0 \), were investigated in the non-random setting~\cite{Serfaty_2020,Nguyen2022} (\(\sigma = 0)\) as well as in a random setting~\cite{JabinWang2018,BreschDidierJabinWangZhenfu2019,bresch2020,RosenzweigSerfaty2023} (\(\sigma > 0 \)). The aforementioned references introduced a novel method called the modulated free energy approach, which provides a practical quantity to obtain a priori estimates. For the Coulomb potential, this quantity even metrize the weak convergence of the empirical measures~\cite{RosenzweigSerfaty2023}. A drawback of the modulated free energy approach is that it requires the existence of an entropy solution on the particle level (microscopic level), see~\cite[Proposition~4.2]{BreschDidierJabinWangZhenfu2019}, which is non-trivial outside a setting on the torus. Further results on propagation of chaos were proven  for general \(L^p\)-interaction force kernels~\(k\) for first and second order systems on the torus~\cite{bresch2023} and on the whole space~\(\R^d\) \cite{hao2022,han2023,Lacker2023}. For instance, \cite{Lacker2023} provides optimal bounds on the relative entropy of order \(\mathcal{O}(k^2/N^2)\) by exploiting the BBGKY-hierarchy combined with delicate estimates on the error of iterations.

An influential approach allowing to deal with the {V}lasov--{P}oisson system, which is a second order system with a singular interaction force kernel~\(k\), was introduced by D. Lazarovici and P. Pickl~\cite{lazarovici2017mean}. For the {V}lasov--{P}oisson system, the underlying particle system~\eqref{eq: intro_particle_system} is a priori not well-posed. Therefore, a regularization \( k^\epsilon\) of the kernel~\(k\) is required, where \(k^\epsilon\) is a smooth approximation of the  interaction force kernel~\(k\) such that the system~\eqref{eq: intro_particle_system} is well-posed. The aforementioned approach is widely used, for instance, for the Keller--Segel equation~\cite{HuangHiuLius2019,LiuYang2019,Fetecau_2019}, the Cucker--Smale model with singular communication~\cite{HaKimPickl2019} and the Vlasov--Poisson--Fokker--Planck equation~\cite{CarrilloChoiSalem2019,HuangLiuPickl2020,ChenLiPickl2020}. An advantage of it is that well-posedness of the underlying particle system is not required since one works directly with the regularized/approximative particle system using the kernel \(k^\epsilon\). In particular, if the system has a non-regular drift, as e.g. the Keller--Segel system~\cite[Proposition~4]{FournierJourdain2017}, the underlying particle system could collapse. Moreover, the approach of D. Lazarovici and P. Pickl allows to show the propagation of chaos of the regularized particle systems to the regularized mean-field equation. That means, it acts like an intermediate result. On the one hand, the remaining limit of the regularized mean-field equation to the mean-field equation is reduced to a convergence analysis on the PDE level. On the other hand, the convergence of the regularized particle system to the non-regularized particles system only requires a stability analysis on the SDE level, which still is, at least in general, a challenging task.

In the present article we establish the approach of D. Lazarovici and P. Pickl~\cite{lazarovici2017mean} in a general setting allowing for interacting particle systems and diffusion-aggregation equations with bounded interaction force kernels which can be approximated in a suitable manner by smooth kernels. One main objective is to provide a transparent road map how to utilize this approach. To that end, we give a brief summary of the approach and explain its core concepts. 

While we present all results in a one-dimensional setting to avoid cumbersome notation, we would like to remark that all results can be extended with minor modifications to a multi-dimensional setting.

The first contribution is the well-posedness of the diffusion-aggregation equation, see~\eqref{eq: aggregation_diffusion_pde} below, which is derived from the interacting particle system~\eqref{eq: intro_particle_system}, for bounded interaction force kernels \(k\). The main challenge lies in the non-linearity in the transport term, which is treated by a strong-weak convergence argument provided by Aubin lemma. The presented well-posedness result expands previous existence results regarding similar PDEs, for instance, regarding bounded confidence models~\cite{Lorenz2007,ChazelleJiu2017} used in social science.

The second contribution is to provide \(L^p\)- and \(L^\infty\)-estimates for the solution \(\rho\) through a Moser iteration. Following~\cite{lazarovici2017mean}, we introduce a uniform local Lipschitz assumption, see Assumption~\ref{ass: loc_lip_bound} below. For instance, we verify that models for the opinion formation of interacting agents, such as the Hegselmann--Krause model~\cite{hegselmann2002}, satisfy this uniform local Lipschitz assumption. As a rule of thumb, Assumption~\ref{ass: loc_lip_bound} is fulfilled by interaction force kernels with jump/singularity having the same order as the space dimension, which in the present case is one.

As third contribution, we establish propagation of chaos in probability supposing the local Lipschitz assumption for the bounded interaction force kernel~\(k\). This is achieved by proving a suitable law of large numbers,  demonstrating the convergence of the regularized particle system to the regularized mean-field system in a suitable topology and, subsequently, proving the convergence of the regularized probability density \(\rho^\epsilon\) to the  probability density \(\rho\) as \(\epsilon \to 0\).

\medskip

\noindent \textbf{Organization of the paper:} In Section~\ref{sec: preliminaries} we introduce the notation, the interacting particle systems and their associated diffusion-aggregation equations. Moreover, we present a brief outline of the used method, building on the work of D. Lazarovici and P. Pickl~\cite{lazarovici2017mean}. The well-posedness and regularity properties of the diffusion-aggregation equations are established in Section~\ref{sec: existence_of_pdes}. In Section~\ref{sec: local_lipschitz_bound} we discuss the local Lipschitz assumption on the approximative interaction force kernels and provide various examples. Section~\ref{sec: law_of_large_numbers} contains the law of large numbers and the propagation of chaos in probability is provided in Section~\ref{sec: convergence_in_probability}.
 
\section{Setting and method}\label{sec: preliminaries}

In this section we introduce the basic setting, that is, the necessary notation, the interacting particle systems as well as their associated PDEs, and outline the general method implemented in the present paper, following~\cite{lazarovici2017mean}.

\subsection{Basic definitions and function spaces}

In this subsection we collect the basic definitions and introduce the required function spaces.

For a vector \(x=(x_1,\ldots,x_N) \in \R^N\), we write $|x|$ for the standard Euclidean norm and \(|x|_\infty = \sup_{1 \le i \le N} |x_i|\) for the \(l^\infty\)-Euclidean norm. Throughout the entire paper, we use the generic constant \(C\) for inequalities, which may change from line to line. For two functions $g$ and $f$, we write $f\sim g$ if they are proportional.

For \( 1 \le p \le \infty\) we denote by \(L^p(\R)\) the space of measurable functions whose \(p\)-th power is Lebesgue integrable (with the standard modification for \(p = \infty\)) equipped with the norm \(\norm{\cdot}_{L^p(\R)}\), by \(L^1(\R, |x|\Id x)\) the space of all measurable functions \(f\) such that \(\int_{\R}|f(x)||x|\Id x <\infty\), by \(\testfunctions{\R}\) the space of all infinitely differentiable functions with compact support on \(\R\), and by \(\mathcal{S}(\R)\) the space of all Schwartz functions, see \cite[Chapter~6]{YoshidaKosaku1995FA} for more details.

Let \((Z,\norm{\cdot}_Z)\) be a Banach space. The space \(L^p([0,T];Z)\) consists of all strongly measurable functions \(u\colon [0,T] \to Z\) such that
\begin{equation*}
  \norm{u}_{L^p([0,T];Z)}:= \left( \int\limits_0^T \norm{u(t)}_Z^p \Id t \right)^{\frac{1}{p}} < \infty
  ,\quad \text{for } 1 \le p < \infty,
\end{equation*}
and
\begin{equation*}
  \norm{u}_{L^\infty([0,T];Z)}
  := \operatorname*{ess\,sup}_{ t\in [0,T]} \norm{u(t)}_Z < \infty, \quad \text{for }p=\infty.
\end{equation*}
The Banach space \(C([0,T];Z)\) consists of all continuous functions \(u \colon [0,T] \to Z\) and is equipped with the norm
\begin{equation*}
  \max\limits_{t \in [0,T]} \norm{u(t)}_Z < \infty.
\end{equation*}

For sufficiently smooth functions \(u \colon [0,T] \times \R \to \R \) we denote the \(n\)-th derivative with respect to \(x\) by \(\frac{\d^n}{\d x^n} u(t,x)\), where we also write \(u_x\) for \(\frac{\d}{\d x} u(x)\) and \(u_{xx}\) for \(\frac{\d^2}{\d x^2} u(x)\). For \(1 <p < \infty\) and \( m \in \N\), we define the Sobolev space
\begin{equation*}
  W^{m,p} (\R) : = \bigg \{ u \in L^p(\R) \; : \; \norm{u}_{W^{m,p}(\R)}:= \sum\limits_{ n \le m } \norm{\frac{\d^n}{\d x^{n}} u }_{L^p(\R)}  < \infty \bigg \},
\end{equation*}
where \(\frac{\d^n}{\d x^n} u\) are understood as weak derivatives, see e.g. \cite{AdamsRobertA2003Ss}. Moreover, we use the abbreviation \(H^m(\R):= W^{m,2}(\R)\), write \(H^{-1}(\R)\) for the dual space of \(H^1(\R)\) and denote the dual paring by \(\langle \cdot, \cdot\rangle_{H^{-1}(\R), H^1(\R)}\). Weak convergence is denoted by the symbol $\rightharpoonup$, where the involved function spaces are not further specified if they are clear from the context.

\subsection{Particle systems}\label{subsec: particle system}

In this subsection we introduce the probabilistic setting, in particular, the \(N\)-particle system and its regularized version. To that end, let \((\Omega, \mathcal{F}, ( \mathcal{F}_t)_{t \ge 0 } , \P)\) be a complete probability space with right-continuous filtration \((\mathcal{F}_t)_{t \ge 0 } \) and \((B_t^{i}, t\ge 0)\), \( i=1, \ldots, N\), be independent one-dimensional Brownian motions.

\medskip

Throughout the entire paper we make the following assumptions on the interaction force kernel~$k$ and the initial condition~\(\rho_0\) of the interacting particle system.

\begin{assumption}\label{ass: initial condition}
  The interaction force kernel \(k \colon \R \to \R\) satisfies
  \begin{equation*}
    k \in  L^\infty(\R)
  \end{equation*}
  and the initial condition \(\rho_0 \colon \R \to \R \) fulfills
  \begin{equation*}
    \rho_0 \in L^1(\R) \cap L^\infty(\R) \cap L^1(\R, |x| \Id x ) ,\quad
    \rho_0 \ge 0 ,\quad\text{and}\quad
    \int_{\R} \rho_0(x) \Id x = 1 .
  \end{equation*}
\end{assumption}

The \(N\)-particle system \(\mathbf{X}_t^N:= (X_t^{1}, \ldots,X_t^{N})\) is given by
\begin{equation}\label{eq: particle_system}
  \Id X_t^{i} = -\frac{1}{N} \sum\limits_{j=1}^N k(X_t^{i} -X_t^{j}) \Id t + \sigma \Id B_t^{i}, \quad i=1,\ldots,N , \; \; \mathbf{X}_0^N \sim \overset{N}{\underset{i=1}{\otimes}} \rho_0,
\end{equation}
where \(\sigma >0\) is the diffusion parameter and \( \mathbf{X}_0^N\) is independent of the Brownian motions \((B_t^{i}, t\ge 0)\), \( i=1, \ldots, N\). In the limiting case when \(N \to \infty\), the particle system~\eqref{eq: particle_system} induces the following i.i.d. sequence \(\mathbf{Y}_t^N := (Y_t^{1}, \ldots,Y_t^{N})\) of mean-field particles
\begin{equation}\label{eq: mean_field_trajectories}
  \Id Y_t^{i} = -  (k* \rho_t)(Y_t^{i}) \Id t + \sigma \Id B_t^{i}, \quad i=1,\ldots,N , \; \; \mathbf{Y}_0^N=\mathbf{X}_0^N,
\end{equation}
where \(\rho_t:= \rho(t,\cdot)\) denotes the probability density of any of the i.i.d. random variables \(Y_t^{i}\).

To introduce the regularized versions of \eqref{eq: particle_system} and \eqref{eq: mean_field_trajectories}, we take a smooth approximation \((k^\epsilon, \epsilon > 0)\) of \(k\). The regularized microscopic \(N\)-particle system \(\mathbf{X}_t^{N,\epsilon } := (X_t^{1,\epsilon}, \ldots,X_t^{N,\epsilon})\) is given by
\begin{equation}\label{eq: regularized_particle_system}
  \d X_t^{i,\epsilon} = -\frac{1}{N} \sum\limits_{j=1}^N k^\epsilon(X_t^{i,\epsilon} -X_t^{j,\epsilon}) \Id t + \sigma \Id B_t^{i}, \quad i=1,\ldots,N , \; \; \mathbf{X}_0^{N,\epsilon} \sim \overset{N}{\underset{i=1}{\otimes}} \rho_0,
\end{equation}
and the regularized mean-field trajectories \(\mathbf{Y}_t^{N,\epsilon} := (Y_t^{1,\epsilon}, \ldots,Y_t^{N,\epsilon})\) by
\begin{equation}\label{eq: regularized_mean_field_trajectories}
  \d Y_t^{i,\epsilon} = -  (k^\epsilon* \rho_t^\epsilon)(Y_t^{i,\epsilon}) \Id t + \sigma \Id B_t^{i}, \quad i=1,\ldots,N , \; \; \mathbf{Y}_0^{N,\epsilon}=\mathbf{X}_0^{N,\epsilon},
\end{equation}
where \(\rho_t^\epsilon:=\rho^\epsilon(t,\cdot)\) denotes the probability density of any of the i.i.d. random variables \(Y_t^{i,\epsilon}\).

Moreover, for \(i=1, \ldots,N\), it is convenient to denote the regularized interaction force \(K_i^\epsilon \colon \R^{N} \to \R\) as
\begin{equation}\label{eq: K_N}
  K_i^\epsilon(x_1,\ldots,x_N) := -\frac{1}{N} \sum\limits_{j=1}^N k^\epsilon(x_i-x_j), \quad (x_1,\ldots,x_N) \in \R^{N},
\end{equation}
and the mean-field interaction force \(\overline{K_{t,i}^\epsilon}: \R^{N} \to \R \) as
\begin{equation}\label{eq: averaged_K_N}
  \overline{K_{t,i}^\epsilon}(x_1,\ldots,x_N):=- (k^\epsilon*\rho_t^\epsilon)(x_i), \quad (x_1,\ldots,x_N) \in \R^{N},
\end{equation}
where \(\rho_t^\epsilon \) is the law of \(Y_t^{i,\epsilon}\).

\subsection{Diffusion--aggregation equations}

The associated probability densities of the particle systems, introduced in Subsection~\ref{subsec: particle system}, satisfy non-linear, non-local partial differential equations (PDEs). Indeed, the particle system~\eqref{eq: mean_field_trajectories} induces the non-linear diffusion-aggregation equation
\begin{align}\label{eq: aggregation_diffusion_pde}
  \begin{cases}
  \frac{\d}{\d t} \rho(t,x) = \frac{\sigma^2}{2} \rho_{xx} (t,x) +  ((k*\rho)(t,x) \rho(t,x))_x \quad & \forall (t,x) \in  [0,T)  \times \R \\
  \; \; \, \rho(x,0) = \rho_0 &\forall x \in \R
  \end{cases}
\end{align}
and the regularized particle system~~\eqref{eq: regularized_mean_field_trajectories} the diffusion-aggregation equation
\begin{align}\label{eq: regularized_aggregation_diffusion_pde}
  \begin{cases}
  \frac{\d}{\d t} \rho^\epsilon(t,x) = \frac{\sigma^2}{2} \rho^\epsilon_{xx} (t,x) +  ((k^\epsilon*\rho^\epsilon)(t,x) \rho^\epsilon(t,x))_x \quad & \forall (t,x) \in   [0,T) \times \R  \\
  \; \; \, \rho^\epsilon(x,0) = \rho_0 &\forall x \in \R
  \end{cases} . 
\end{align}

Note that we use \(\rho_t\) and \(\rho_t^\epsilon \) for the solutions of the PDEs~\eqref{eq: aggregation_diffusion_pde} and \eqref{eq: regularized_aggregation_diffusion_pde} as well as for the probability densities of the particle systems \eqref{eq: mean_field_trajectories} and \eqref{eq: regularized_mean_field_trajectories}, respectively, since these objects coincide by the superposition principle, see \cite{barburoeckner2020}, in combination with existence results of densities for the considered SDEs, see~\cite{romito2018}.

For the partial differential equations~\eqref{eq: aggregation_diffusion_pde} and \eqref{eq: regularized_aggregation_diffusion_pde} we rely on the concept of weak solutions, which we recall in the next definition.

\begin{definition}[Weak solutions]\label{def: weak_solution}
  Fix \(\epsilon > 0\) and \(T>0\). We say \(\rho^\epsilon \in L^2([0,T];H^1(\R)) \cap L^\infty([0,T];L^2(\R))\) with \( \frac{\d}{\d t} \rho^\epsilon \in L^2([0,T];H^{-1}(\R))\) is a weak solution of \eqref{eq: regularized_aggregation_diffusion_pde} if, for every \(\eta \in L^2([0,T];H^1(\R))\),
  \begin{equation}\label{eq: regularized_weak_solution}
    \int\limits_0^T \left\langle \frac{\d}{\d t} \rho^\epsilon_t, \eta \right\rangle_{H^{-1}(\R), H^1(\R)} \Id t = - \int\limits_0^T \int_\R \left ( \frac{\sigma^2}{2} \rho^\epsilon_{x} (t,x) + (k^\epsilon*\rho^\epsilon)(t,x) \rho^\epsilon(t,x) \right) \eta_x \Id x \Id t
  \end{equation}
  and \(\rho^\epsilon(0,\cdot) = \rho_0\). Note that \(\rho^\epsilon \in L^2([0,T];H^1(\R))\) with \( \frac{\d}{\d t} \rho^\epsilon \in L^2([0,T];H^{-1}(\R))\) implies \(\rho^\epsilon \in C([0,T];L^2(\R))\), see \cite[Chapter~5.9]{EvansLawrenceC2015Pde}. Similarly, we say that \(\rho \in L^2([0,T];H^1(\R)) \cap L^\infty([0,T];L^2(\R))\) with \( \frac{\d}{\d t} \rho \in L^2([0,T];H^{-1}(\R))\) is a weak solution of \eqref{eq: aggregation_diffusion_pde} if \eqref{eq: regularized_weak_solution} holds with the interaction force kernel \(k\) instead of its approximation \(k^\epsilon\).
\end{definition}

By the regularity of the solution in Definition~\ref{def: weak_solution} we can actually weaken the assumption on \(\eta\) in equation~\eqref{eq: regularized_weak_solution} to \(\eta \in C([0,T];C_c^\infty(\R))\).

\begin{remark}
  The divergence structure of the PDEs \eqref{eq: aggregation_diffusion_pde} and \eqref{eq: regularized_aggregation_diffusion_pde}, respectively, implies mass conservation/the normalisation condition
  \begin{equation*}
    1 = \int_\R \rho_t(x) \Id x = \int_\R \rho^\epsilon_t(x) \Id x, \quad  t\in [0,T],
  \end{equation*}
  under Assumption~\ref{ass: initial condition}. This is an immediate consequence by plugging in a cut-off sequence, see~\cite[Lemma~8.4]{BrezisHaim2011FaSs}, which converges to the constant function \(1\) as a test function in~\eqref{eq: regularized_weak_solution}.
\end{remark}

\subsection{Outline of the method}\label{subsec: sketch}

The method of the present paper originated from the approach of D. Lazarovici and P. Pickl, developed for the {V}lasov--{P}oisson system in~\cite{lazarovici2017mean}. It is based on the coupling method~\cite{snitzman_propagation_of_chaos} and a regularization of \(k\) to \(k^\epsilon \). A key insight of D. Lazarovici and P. Pickl is to prove the convergence in probability with an arbitrary large algebraic rate and algebraic cut-off parameter \(\epsilon \sim N^{-\beta}, \;  \beta >  0\), instead of comparing the trajectories \(\mathbf{X}^{N,\epsilon}\) and \(\mathbf{Y}^{N,\epsilon}\) in Wasserstein distance or in \(L^2\)-norm, as for instance done in \cite{snitzman_propagation_of_chaos,CarrilloChoi2014}. More precisely, for \(\alpha \in (0,1/2)\), \(\beta \le \alpha\) and arbitrary \(\gamma > 0\), we shall show that
\begin{equation*}
  \P\left(\sup\limits_{ t\in [0,T]} |\mathbf{X}_t^{N,\epsilon}-\mathbf{Y}_t^{N,\epsilon}|_\infty \ge N^{-\alpha}\right) \le C(\gamma) N^{-\gamma}, \quad \mathrm{for \; each}  \; N \ge N_0.
\end{equation*}
To implement this strategy and to achieve the aforementioned result, we proceed as follows:
\begin{enumerate}
  \item We start with a PDE analysis of the diffusion-aggregation equations~\eqref{eq: aggregation_diffusion_pde} and~\eqref{eq: regularized_aggregation_diffusion_pde}, that is, we prove the well-posedness of the non-local, non-linear PDEs~\eqref{eq: aggregation_diffusion_pde} and~\eqref{eq: regularized_aggregation_diffusion_pde}, together with an \(L^\infty([0,T];L^\infty(\R))\)-bound on the solution \(\rho^{\epsilon}\), which is uniform in \(\epsilon \). These results can be obtained via standard PDE techniques such as a compactness method, Aubin--Lions lemma, which provides strong convergence, and a Moser type iteration, see Section~\ref{sec: existence_of_pdes}. 
 The uniform bound allows us to have a trade-off between the irregularity of the interaction force kernel and the regularity of the solution \(\rho^\epsilon\).

  \item The main idea of D. Lazarovici and P. Pickl was to recognize that even though the interaction force kernel is not globally Lipschitz continuous, the approximation \(k^\epsilon\) satisfies a local Lipschitz bound of order \(\epsilon^{-1}\) (in dimension \(d\) of order \(\epsilon^{-d}\)) for \(|x-y| \le 2\epsilon\), i.e.
  \begin{equation}\label{eq: sketch_loc_lipschitz}
    |k^\epsilon(x)-k^\epsilon(y)|\le l^\epsilon(y) |x-y|.
  \end{equation}
  Let us emphasize that the bound depends only on the point~\(y\). Hence, the above inequality seems like a Taylor expansion around the point \(y\), where the second order term is missing. Consequently, the bound cannot be achieved by a simple application of the mean-value theorem.

  We will assume that the interaction force kernel~\(k\) satisfies~\eqref{eq: sketch_loc_lipschitz}, see Assumption~\ref{ass: loc_lip_bound} below, and present various examples of such kernels in Section~\ref{sec: local_lipschitz_bound}. We refer to~\cite{lazarovici2017mean,CarrilloChoiSalem2019,HuangHiuLius2019} for further models with interaction force kernels satisfying $\eqref{eq: sketch_loc_lipschitz}$. In general, whether \eqref{eq: sketch_loc_lipschitz} holds true entirely depends on the interaction force kernel of the considered model, in particular, on the order of discontinuity/singularity of the kernel. Hence, as rule of thumb, if the discontinuity/singularity is of order \(\epsilon^{-d+1}\) in a \(d\)-dimensional setting, then the local Lipschitz bound assumption can be satisfied.

  \item We need to derive a law of large numbers, see Section~\ref{sec: law_of_large_numbers}. This allows us to treat every involved object with regard to its expectation on a set with high probability, which enables us to take advantage of the obtained regularity of \(\rho^\epsilon\) in Step (1). Unsurprisingly, we need i.i.d. objects to apply the derived law of large numbers. In the present case these objects are going to be the processes \((Y_t^{i,\epsilon}, t \ge 0) \) for \(i \in \N\). Moreover, we would like to emphasize the importance of Step~(2) at this moment and the crucial fact that \(l^\epsilon(y)\) only depends on the point \(y\). Replacing in inequality~\eqref{eq: sketch_loc_lipschitz} the point \(y\) with the process \(Y_t^{i,\epsilon}\) and \(x \) with the process \(X_t^{i,\epsilon}\), we see that \(l^\epsilon\) on the right-hand side of~\eqref{eq: sketch_loc_lipschitz} is depending on the i.i.d. process \(Y_t^{i,\epsilon}\). Consequently, we can rely on the law of large numbers, Proposition~\ref{prop: law_of_large_numbers}.
 
  \item Finally, let us demonstrate how to apply the previous steps to derive propagation of chaos in probability but leaving out the technical difficulties. To that end, for some \(\alpha \in (0,1/2)\) and \(\delta >0\), we define an auxiliary process
  \begin{equation*}
    J_t^N := \min \bigg( 1, N^\alpha|\mathbf{X}^{N,\epsilon}-\mathbf{Y}^{N,\epsilon}|_\infty + N^\delta \bigg).
  \end{equation*}
  This process seems to control the difference \(|\mathbf{X}^{N,\epsilon}-\mathbf{Y}^{N,\epsilon}|_\infty\) in the limit \(N\to \infty\) with weight \(N^\alpha\). Furthermore, the minimum is no restriction, since we only want to show convergence to zero in probability, and we notice that, if \(N^\alpha|\mathbf{X}^{N,\epsilon}-\mathbf{Y}^{N,\epsilon}|_\infty\) is too big, the process stays constant one and the time derivative is zero. Therefore, we heuristically obtain
  \begin{align*}
    &\frac{\dd}{\dd t} (N^\alpha|\mathbf{X}^{N,\epsilon}-\mathbf{Y}^{N,\epsilon}|_\infty + N^{-\delta})\\
    &\quad\le N^\alpha \sup \limits_{i=1,\ldots,N} |K_i^{\epsilon}(\mathbf{X}_t) -\overline{K_{t,i}^{\epsilon}}(\mathbf{Y}_t) | \\
    &\quad\le N^\alpha \sup \limits_{i=1,\ldots,N} |K_i^{\epsilon}(\mathbf{X}_t) - K_i^{\epsilon}(\mathbf{Y}_t)) | + N^\alpha \sup \limits_{i=1,\ldots,N} |K_i^{\epsilon}(\mathbf{Y}_t) -\overline{K_{t,i}^{\epsilon}}(\mathbf{Y}_t) |.
  \end{align*}
  The last term depends on the i.i.d. particles \((Y_t^{i},i=1, \ldots, N)\) and can be estimated via the law of large numbers, Proposition~\ref{prop: law_of_large_numbers}, with a rate of \(N^{-\delta-\alpha}\). For the first term we can use the local Lipschitz bound (having in mind that the particles are close because of the minimum in the process) to complete a Gronwall argument. As mentioned before, the crucial point in this step is the fact that the local Lipschitz bound only depends on the i.i.d. particles \(\mathbf{Y}^{N,\epsilon}\) and not on the particles system \(\mathbf{X}^{N,\epsilon}\). This allows us to exchange the local Lipschitz bound \(\frac{1}{N} \sum\limits_{j=1}^N  l^\epsilon(Y_t^{i,\epsilon}-Y_t^{j,\epsilon})\) with its conditional expectation \(l^\epsilon*\rho_t^\epsilon(Y_t^{i,\epsilon})\). Using the regularity properties, obtained from the PDE analysis in Step~(1), we can bound \(\norm{l^\epsilon*\rho_t^\epsilon}_{L^\infty([0,T];L^\infty(\R))}\). Hence, we conclude that
  \begin{align*}
    \frac{\dd}{\dd t} (  N^\alpha|\mathbf{X}^{N,\epsilon}-\mathbf{Y}^{N,\epsilon}|_\infty+ N^{-\delta} )
    \le C (N^\alpha|\mathbf{X}^{N,\epsilon}-\mathbf{Y}^{N,\epsilon}|_\infty + N^{-\delta}).
  \end{align*}
  Applying Gronwall's lemma completes the proof. We remark that we implicitly used the fact that the law of large numbers holds for large \(N\in \N\) and, consequently, the above Gronwall inequality only holds in the limit \(N \to \infty\). In the actual proof we will use a version of the process \(J_t^N\) which is multiplied by an exponential, which just leads to a rewriting of the above Gronwall argument.
\end{enumerate}

The remaining of the present paper is devoted to establish Step (1)-(4) with all technical details for bounded interaction force kernels.

\section{Well-posedness and uniform bounds for the PDEs}\label{sec: existence_of_pdes}

In this section we prove well-posedness of the PDEs \eqref{eq: aggregation_diffusion_pde} and \eqref{eq: regularized_aggregation_diffusion_pde}, show the convergence of the solutions \((\rho^\epsilon, \epsilon > 0)\) to \(\rho\) in the weak topology, and provide regularity results as well as uniform bounds for \((\rho^\epsilon, \epsilon > 0)\) and \(\rho\), which are required for propagation of chaos result in probability established later in Section~\ref{sec: convergence_in_probability}. We start by introducing an assumption on the approximation sequence \((k^\epsilon, \epsilon> 0)\) of interaction force kernels.

\begin{assumption}\label{ass: kernel_smooth_convergence}
  Let \((k^\epsilon, \epsilon>0)\) be a sequence, which satisfies the following:
  \begin{enumerate}[label=(\roman*)]
    \item For each \(\epsilon>0\) the interaction force kernel \(k^\epsilon \in C^2(\R)\);
    \item For each \(\epsilon>0\) we have \(\norm{k^\epsilon}_{L^\infty(\R)} \le C \norm{k}_{L^\infty(\R)} < \infty\);
    \item We have \(\lim\limits_{\epsilon \to 0} k^\epsilon = k \; \, \text{a.e.}\)
  \end{enumerate}
\end{assumption}

For the non-linear, non-local PDE~\eqref{eq: regularized_aggregation_diffusion_pde} we notice that, by Young's inequality, we obtain the following \(L^\infty(\R)\)-bound
\begin{equation}\label{eq: l_infinity_bound_of_interaction_force}
  |(k^\epsilon*\rho^{\epsilon})(t,x)| \le \norm{k^\epsilon}_{L^\infty(\R)} \norm{\rho^\epsilon_t}_{L^1(\R)}
  \le C \norm{k}_{L^\infty(\R)} .
\end{equation}
Hence, \(k^\epsilon *\rho\) is uniformly bounded in \(\epsilon>0\) on \( [0,T] \times \R \). The same statement holds for \(k*\rho\). Consequently, the convolution term is bounded and we expect the existence of a weak solution to the PDEs~\eqref{eq: aggregation_diffusion_pde} and~\eqref{eq: regularized_aggregation_diffusion_pde}.
  
\begin{theorem}\label{theorem: existence_regularized_solutions}
  Suppose Assumption~\ref{ass: initial condition}. Then, for each \(T>0\) and \( \epsilon > 0\) there exists a unique non-negative weak solution \(\rho^\epsilon \in L^2([0,T];H^1(\R)) \cap L^\infty([0,T];L^2(\R))\) with \( \frac{\d}{\d t} \rho^\epsilon  \in L^2([0,T];H^{-1}(\R))\) to the regularized PDE~\eqref{eq: regularized_aggregation_diffusion_pde} in the sense of Definition~\ref{def: weak_solution}. Moreover, the estimate
  \begin{equation}\label{eq: regularized_solutions_estimate}
    \norm{\rho^\epsilon}_{L^\infty([0,T];L^2(\R))} + \norm{\rho^\epsilon}_{L^2([0,T];H^1(\R))}  + \norm{ \frac{\d}{\d t} \rho^\epsilon}_{L^2([0,T];H^{-1}(\R))}
    \le C(T) \norm{\rho_0}_{L^2(\R)}
  \end{equation}
  holds for all \(\epsilon >0\).
\end{theorem}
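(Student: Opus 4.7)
The plan is to obtain existence via a Schauder fixed-point argument applied to a linearization, with the uniform bounds falling out of a standard $L^2$ energy estimate. For any $u$ in a suitable convex set of non-negative densities with $\int u_t \, dx = 1$, the convolution $b^u := k^\epsilon * u$ lies in $L^\infty([0,T] \times \R)$ with $\|b^u\|_{L^\infty} \le C\|k\|_{L^\infty(\R)}$ uniformly in $u$, by Assumption~\ref{ass: kernel_smooth_convergence}(ii) and the $L^1$-normalization of $u$. The linear Fokker--Planck equation
$$\partial_t \rho = \tfrac{\sigma^2}{2}\rho_{xx} + (b^u \rho)_x, \qquad \rho(0,\cdot) = \rho_0,$$
then admits a unique weak solution $\rho =: \Phi(u)$ in the energy space by classical Lions-type theory (Galerkin).

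For the a priori bound, I would test against $\rho$ itself (justified by $\rho \in C([0,T]; L^2) \cap L^2([0,T]; H^1)$), treat the transport term by Cauchy--Schwarz, and absorb a fraction of $\|\rho_x\|_{L^2}^2$ via Young's inequality to obtain
$$\frac{d}{dt}\|\rho(t)\|_{L^2(\R)}^2 + \tfrac{\sigma^2}{2}\|\rho_x(t)\|_{L^2(\R)}^2 \le C\|\rho(t)\|_{L^2(\R)}^2.$$
Gronwall then yields the first two terms of \eqref{eq: regularized_solutions_estimate} with constants depending only on $\|k\|_{L^\infty(\R)}$, $\sigma$, and $T$. The $H^{-1}$-bound on $\partial_t \rho$ is obtained directly from the equation, since $\rho_{xx}$ and $(b^u \rho)_x$ both lie in $L^2([0,T]; H^{-1}(\R))$ with the desired bounds (the latter via $|\int b^u \rho \, \eta_x \, dx| \le \|b^u\|_{L^\infty}\|\rho\|_{L^2}\|\eta_x\|_{L^2}$).

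To upgrade to a fixed point, Aubin--Lions provides compact embedding of the energy space into $L^2([0,T]; L^2_{\mathrm{loc}}(\R))$; a uniform first-moment estimate, propagated from $\rho_0 \in L^1(\R, |x|\,dx)$ in Assumption~\ref{ass: initial condition} together with the uniform $L^\infty$-bound on $b^u$, upgrades this to compactness in $L^2([0,T]; L^2(\R))$. Mass conservation of $\Phi(u)$ follows from the divergence form by testing against a cut-off approximating $1$; non-negativity follows from testing against $(\Phi(u))^-$ and Gronwall, using $\rho_0 \ge 0$. Schauder's theorem then delivers a fixed point $\rho^\epsilon$, which is the sought weak solution. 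Uniqueness is obtained by subtracting two solutions $\rho^\epsilon_1, \rho^\epsilon_2$, testing with $w := \rho^\epsilon_1 - \rho^\epsilon_2$, and closing via Gronwall, using that both $k^\epsilon * \rho^\epsilon_j$ and $k^\epsilon * w$ are controlled in $L^\infty$ by $\|k\|_{L^\infty(\R)}$.

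The main obstacle I expect lies in reconciling the probability-density constraints (non-negativity and $L^1$-normalization) with the $L^2$-based Schauder setup: the set of probability densities is not closed under weak $L^2$ limits without simultaneous tightness, so uniform first-moment propagation is essential both for compactness of the map $\Phi$ and for its invariance on the convex set. A secondary technical point is justifying the energy identity for the nonlinear problem, which should be handled by first establishing it on the linearized level and then transferring to the fixed point.
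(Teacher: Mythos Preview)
Your proposal is correct but takes a genuinely different route from the paper. The paper does \emph{not} run a Schauder fixed-point argument. Instead it uses a probabilistic shortcut: it first invokes well-posedness of the McKean--Vlasov SDE (via \cite{mishura2020}) and existence of a density $\rho^\epsilon$ for its solution (via \cite{romito2018}); then, \emph{fixing} this $\rho^\epsilon$, it solves the \emph{linear} parabolic equation with drift $k^\epsilon * \rho^\epsilon$ by standard theory, obtaining a solution $\tilde\rho^\epsilon$ with the claimed regularity and the estimate~\eqref{eq: regularized_solutions_estimate}; finally, the superposition principle of \cite{barburoeckner2020} together with Yamada--Watanabe forces $\tilde\rho^\epsilon = \rho^\epsilon$, so the McKean--Vlasov density inherits the regularity.

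Your argument is the classical PDE route: linearize, derive the same energy bounds you sketch (these are exactly what the paper hides under ``standard second order parabolic PDE theory''), and close with Schauder plus Aubin--Lions. The moment-propagation step you flag as essential for compactness on the whole line is indeed needed, and is carried out separately in the paper as Lemma~\ref{lemma: uniform_bound_x2} and Lemma~\ref{lemma: aubin_lion_space}; you would be front-loading that work into the existence proof. What each approach buys: the paper's argument is shorter and, since the density is by construction the law of $Y^{i,\epsilon}_t$, dovetails directly with the probabilistic framework used later, at the cost of importing several external SDE results. Your argument is self-contained and purely analytic, and would transfer to settings where the McKean--Vlasov SDE theory is less readily available, at the cost of the invariance and continuity bookkeeping for $\Phi$ that you correctly anticipate.
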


\begin{proof}
  Let us explain the main idea of the existence proof. We consider the associated McKean--Vlasov process
  \begin{align*}
    \begin{cases}
    \Id Y_t^{\epsilon} &= -  (k^\epsilon* \rho_t^\epsilon)(Y_t^{\epsilon}) \Id t + \sigma \Id B_t^{1},  \; \; Y_0 \sim \rho_0, \\
    \rho_t^\epsilon &= \mathrm{Law}(Y_t^{\epsilon})
    \end{cases}
  \end{align*}
  for the initial data \(\rho_0\). Then, by \cite[Proposition~2]{mishura2020}, the aforementioned SDE has a unique strong solution and, by \cite[Proposition~3.1]{romito2018}, it has a density \( (\rho_t^\epsilon,t\ge 0)\). Now, fix \(\rho_t^\epsilon\) and consider the solution \(\tilde{\rho}^\epsilon =( \tilde{\rho}^\epsilon_t,t\ge0)\) to the linearized parabolic PDE
  \begin{align*}
    \begin{cases}
     \frac{\d}{\d t} \tilde{\rho}^\epsilon(t,x) = \frac{\sigma^2}{2} \rho^\epsilon_{xx} (t,x) +  ((k^\epsilon*\rho^\epsilon)(t,x) \tilde{\rho}^\epsilon(t,x))_x \quad & \forall (t,x) \in [0,T) \times \R  \\
    \; \; \, \tilde{\rho}^\epsilon(x,0) = \rho_0 &\forall x \in \R
    \end{cases} .
  \end{align*}
  By standard second order parabolic PDE theory, we know that the aforementioned PDE is well-posed and
  \begin{equation*}
    \tilde{\rho}^\epsilon \in L^2([0,T];H^1(\R)) \cap L^\infty([0,T];L^2(\R)), \quad  \frac{\d}{\d t} \tilde{\rho}_t^\epsilon \in L^2([0,T];H^{-1}(\R)),
  \end{equation*}
  with the estimate~\eqref{eq: regularized_solutions_estimate}. Applying the superposition principle \cite[Theorem~4.1]{barburoeckner2020}, we find a weak solution to
  \begin{equation*}
    \Id \tilde{Y}_t^{\epsilon} = -  (k^\epsilon* \rho_t^\epsilon)(\tilde{Y}_t^{\epsilon}) \Id t + \sigma \Id B_t^{1}, \; \; \tilde{Y}_0^N \sim \rho_0,
    \quad t \in [0,T],
  \end{equation*}
  with \(\mathrm{Law}(\tilde{Y}_t^\epsilon)= \tilde{\rho}_t^\epsilon  \Id x\). Since strong uniqueness holds for the above SDE, we have \(\tilde{Y}^\epsilon  = Y^\epsilon \). By the Yamada--Watanabe theorem~\cite[Chapter~5, Proposition~3.20]{KaratzasIoannis2009Bmas} this implies uniqueness in law and therefore
  \begin{equation*}
    \tilde{\rho}_t^\epsilon  \Id x = \rho_t^\epsilon  \Id x, \quad t \in [0,T],
  \end{equation*}
  in the sense of measures. Hence, \(\tilde{\rho}_t^\epsilon =  \rho_t^\epsilon \; \P\text{-a.s.} \) for all \(t \in [0,T]\) and \(\rho^\epsilon\) has the desired regularity.
\end{proof}

\begin{lemma}\label{lemma: uniform_bound_x2}
  Fix \(\epsilon > 0\) and suppose Assumption~\ref{ass: initial condition}. Moreover, consider a solution \(\rho^\epsilon\) of the regularized diffusion-aggregation equation~\eqref{eq: regularized_aggregation_diffusion_pde} with initial data \(\rho_0  \), which by Theorem~\ref{theorem: existence_regularized_solutions} exists. Then, we have the following uniform bound
  \begin{equation*}
    \int_{\R} |x| \rho^\epsilon(t,x) \Id x \le \int_{\R}|x| \rho_0(x) \Id x
    + C(T) \norm{\rho_0}_{L^2(\R)} +CT
	+CT \norm{k}_{L^\infty(\R)} 
  \end{equation*}
  for all \(t \ge 0 \), which depend only upon \(\int_{\R} (1+|x|)\rho_0(x) \Id x \) and \(T\). Therefore, the function \(t \mapsto  \int_{\R} |x| \rho^\epsilon(t,x) \Id x \) is bounded in \( L^{\infty}([0,T];\R) \).
\end{lemma}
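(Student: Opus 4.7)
The plan is to test the weak formulation of \eqref{eq: regularized_aggregation_diffusion_pde} against a smooth compactly supported approximation of the weight $x\mapsto|x|$, which is neither differentiable at the origin nor admissible as a test function. A natural choice is
\begin{equation*}
\phi_{\delta,R}(x) := \chi_R(x)\bigl(\sqrt{x^2+\delta^2}-\delta\bigr),\qquad \delta,R>0,
\end{equation*}
where $\chi_R\in\testfunctions{\R}$ is a standard monotone cutoff with $\chi_R\equiv 1$ on $[-R,R]$, $\chi_R\equiv 0$ outside $[-2R,2R]$, $\norm{\chi_R'}_{L^\infty(\R)}\le C/R$ and $\norm{\chi_R''}_{L^\infty(\R)}\le C/R^2$. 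Observe that $\phi_{\delta,R}\in\testfunctions{\R}$ is nonnegative, $\phi_{\delta,R}(x)\le|x|$, and $\phi_{\delta,R}\nearrow\phi_\delta(x):=\sqrt{x^2+\delta^2}-\delta$ as $R\to\infty$ while $\phi_\delta\nearrow|x|$ as $\delta\to 0$; moreover $|\phi_\delta'|\le 1$ and $\phi_\delta''(x)=\delta^2/(x^2+\delta^2)^{3/2}\ge 0$ with $\int_\R\phi_\delta''\,\dd x=2$.

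Using $\phi_{\delta,R}$ as a time-independent test function in Definition~\ref{def: weak_solution} (admissible by the remark after that definition) and the continuity $\rho^\epsilon\in C([0,T];L^2(\R))$, I obtain, after one spatial integration by parts, the identity
\begin{equation*}
\int_\R \rho^\epsilon_t\phi_{\delta,R}\,\dd x = \int_\R\rho_0\phi_{\delta,R}\,\dd x + \frac{\sigma^2}{2}\int_0^t\!\!\int_\R\rho^\epsilon\phi''_{\delta,R}\,\dd x\dd s - \int_0^t\!\!\int_\R (k^\epsilon*\rho^\epsilon)\rho^\epsilon\phi'_{\delta,R}\,\dd x\dd s.
\end{equation*}
I then send $R\to\infty$: on the left-hand side and on the initial-data term monotone convergence applies (the latter is also dominated by $\int_\R|x|\rho_0\,\dd x<\infty$). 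The cutoff-generated contributions in $\phi'_{\delta,R},\phi''_{\delta,R}$ (those carrying $\chi_R'$ or $\chi_R''$) are supported in $R\le|x|\le 2R$ and uniformly bounded there, so they vanish against the $L^1$-tails of $\rho^\epsilon$ and of $(k^\epsilon*\rho^\epsilon)\rho^\epsilon$. Thus the identity persists with $\phi_{\delta,R}$ replaced by $\phi_\delta$. The aggregation term is then bounded, using mass conservation, Assumption~\ref{ass: kernel_smooth_convergence}(ii), and $|\phi_\delta'|\le 1$, by $CT\norm{k}_{L^\infty(\R)}$.

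The main obstacle is the diffusion term: $\phi_\delta''$ is an approximation of $2\delta_0$, so it does not vanish as $\delta\to 0$ and formally picks up the value of $\rho^\epsilon$ at the origin. To control it uniformly in $\delta$, I invoke the one-dimensional Sobolev embedding $H^1(\R)\hookrightarrow L^\infty(\R)$ to estimate
\begin{equation*}
\int_\R\rho^\epsilon(s,x)\phi_\delta''(x)\,\dd x \le \norm{\rho^\epsilon_s}_{L^\infty(\R)}\int_\R\phi_\delta''(x)\,\dd x \le C\norm{\rho^\epsilon_s}_{H^1(\R)}.
\end{equation*}
Integrating in $s$ with Cauchy--Schwarz and applying Theorem~\ref{theorem: existence_regularized_solutions}, the diffusion contribution is dominated by $C\sqrt{T}\norm{\rho^\epsilon}_{L^2([0,T];H^1(\R))}\le C(T)\norm{\rho_0}_{L^2(\R)}$, uniformly in $\delta$. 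Finally, passing $\delta\to 0$ by monotone convergence on the left-hand side (since $\phi_\delta\nearrow|x|$) yields the asserted bound on $\int_\R|x|\rho^\epsilon(t,x)\,\dd x$, which is independent of $t\in[0,T]$ and of $\epsilon>0$.
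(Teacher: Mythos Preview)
Your proof is correct and follows essentially the same strategy as the paper: test the weak formulation against a smooth, compactly supported approximation of $|x|$ (the paper uses a piecewise-polynomial smoothing $\chi_n$ times a cutoff $\zeta_n$, you use $\sqrt{x^2+\delta^2}-\delta$ times a cutoff $\chi_R$), then control the initial, transport, and diffusion contributions separately. The only notable difference is in the diffusion term: the paper keeps it in the form $-\int\rho^\epsilon_x(g_n)_x$ and estimates $\int_{-1}^1|\rho^\epsilon_x|\,\dd x$ via Cauchy--Schwarz on the bounded interval, whereas you integrate by parts once more and exploit $\int_\R\phi_\delta''=2$ together with the embedding $H^1(\R)\hookrightarrow L^\infty(\R)$; both routes land on the same bound $C\sqrt{T}\,\norm{\rho^\epsilon}_{L^2([0,T];H^1(\R))}\le C(T)\norm{\rho_0}_{L^2(\R)}$, and your version is arguably tidier.
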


\begin{proof}
  The core idea is to use \(|x|\) as a test function. To that end, we take a sequence of radial antisymmetric functions \((g_n, n \in \N)\) with \(g_n \in C_c^2(\R)\) for all \( n \in \N \), such that \(g_n\) grows to \(|x|\) as \(n \to \infty\) and \(\frac{\d}{\d x} g_n\) is uniformly bounded in \(n \in \N\). More precisely, we choose
  \begin{align*}
    \chi_{n}(x) :=
    \begin{cases}
    |x| \quad   &|x| \ge \frac{1}{n}  \\
    -n^3\frac{x^4}{8}  + n \frac{3x^2}{4} + \frac{3}{8n} \quad & |x| \le \frac{1}{n}
    \end{cases}
  \end{align*}
  and let \((\zeta_n, n \in \N)\) be a sequence of compactly supported cut-off function converging to one with vanishing derivatives in the limit \(n \to \infty\), see \cite[Lemma~8.4]{BrezisHaim2011FaSs}. Define \(g_n := \chi_n \zeta_n\). We notice that \(\chi_n\) has bounded derivative with support in the unit interval and the derivative \( \frac{\d}{\d x} g_n\) has support in the annulus \([-2n,2n] \setminus (-n,n)\). Then, for \(\varphi \in \testfunctions{0,T}\) we obtain
  \begin{align*}
    &\int_{0}^T \int_{\R} g_n(x) \rho^\epsilon(t,x)  \frac{\d}{\d t} \varphi(t) \Id x \Id t \\
    &\quad= \int_{0}^T \int_{\R} \bigg( \frac{\sigma^2}{2} \frac{\d}{\d x} \rho^\epsilon(t,x)  \frac{\d}{\d x} g_n (x) + (k^\epsilon * \rho^\epsilon) \frac{\d}{\d x} g_n(x)  \rho^\epsilon(t,x) \bigg) \varphi(t) \Id x \Id t.
  \end{align*}
  Furthermore, for \(t_1, t_2 \in [0,T] \) we have
  \begin{equation*}
    \left |\int_{\R} g_n(x) \rho^\epsilon(t_1,x) \Id x -\int_{\R} g_n(x) \rho^\epsilon(t_2,x)  \Id x \right|
    \le \norm{g_n}_{L^2(\R)} \norm{ \rho^\epsilon(t_1,\cdot)-\rho^\epsilon(t_2,\cdot)}_{L^2(\R)} .
  \end{equation*}
  Therefore, \(\rho^\epsilon \in C([0,T];L^2(\R)) \) implies that \(t \mapsto \int_{\R} g_n(x) \rho^\epsilon(t,x) \Id x\) is continuous for each \(n \in \N\). Then, the fundamental lemma of calculus of variations, mass conservation and~\eqref{eq: regularized_solutions_estimate} imply
  \begin{align*}
    &\int_{\R} g_n(x) \rho^\epsilon(t,x) \Id x  \\
    &\quad= \int_{\R} g_n(x) \rho_0 (x) \Id x -
    \int_0^t\int_{\R}  \frac{\sigma^2}{2} \frac{\d}{\d x}\rho^\epsilon(s,x)  \frac{\d}{\d x} g_n (x) + \chi (k^\epsilon * \rho^\epsilon)(s,x) \frac{\d}{\d x} g_n (x) \rho^\epsilon(s,x) \Id x \Id s\\
    &\quad\le \,  \int_{\R}|x| \rho_0(x)\Id x
    +  \frac{C \sigma^2}{2}  \int_0^T  \int\limits_{-1}^{1}\left |\frac{\d}{\d x} \rho^\epsilon(s,x)\right | \Id x  \Id s \\
    &\quad \quad- \frac{\sigma^2}{2}  \int_0^t \int_{\R} \chi_n(x)  \frac{\d}{\d x} \zeta_n(x)  \frac{\d}{\d x}\rho^\epsilon(s,x)  \Id x \Id s \\
    &\quad\quad  + \bigg|  \int_0^t \int_{\R^2} k^\epsilon(x-y)  \frac{\d}{\d x} g_n (x) \rho^\epsilon(s,y) \Id y \rho^\epsilon(s,x)  \Id x \Id s  \bigg| \\
    &\quad\le  \,  \int_{\R}|x| \rho_0(x) \Id x
    + C  T^{\frac{1}{2}} \bigg(\int_0^T  \int_{\R} \left| \frac{\d}{\d x} \rho^\epsilon(s,x) \right|^2 \Id x  \Id s \bigg)^{\frac{1}{2}} + C \int_0^T  \int_{\R} \rho^\epsilon(s,x)  \Id x \Id s\\
    &\quad\quad+  \frac{\sigma^2}{2}  \int_0^t \int_{\R} \chi_n(x) \rho^\epsilon(s,x) \frac{\d^2}{\d x^2} \zeta_n(x)  \Id x \Id s \\
    &\quad\quad+ \bigg|  \int_0^t \int_{\R^2} k^\epsilon(x-y)  \frac{\d}{\d x} g_n (x) \rho^\epsilon(s,y) \Id y \rho^\epsilon(s,x)  \Id x \Id s  \bigg| \\
	&\quad\le  \,  \int_{\R}|x| \rho_0(x) \Id x
    + C(T) \norm{\rho_0}_{L^2(\R)} +CT + \frac{C}{n^2} \int_0^T \int\limits_{-2n}^{2n} \chi_n(x) \rho^\epsilon(s,x)\Id x \Id s  \\
    &\quad\quad +  C  \norm{k}_{L^\infty(\R)}\int\limits_0^T  \int_{\R^2}  \rho^\epsilon(s,y) \Id y \rho^\epsilon(s,x)  \Id x \Id s  \\
	&\quad\le \, \int_{\R}|x| \rho_0(x) \Id x
    + C(T) \norm{\rho_0}_{L^2(\R)} +CT + \frac{CT}{n}
	+C T  \norm{k}_{L^\infty(\R)}  . 
  \end{align*}
  Applying Fatou's lemma proves the lemma. 
\end{proof}

\begin{lemma}\label{lemma: aubin_lion_space}
  Let \((f_n,n \in \N)\) be a sequence in \(L^2([0,T];H^1(\R))\). If the sequence satisfies
  \begin{enumerate}[label=(\roman*)]
    \item \(\norm{f_n}_{L^2([0,T];H^{1}(\R))} \le C \),
    \item \(\norm{ \frac{\d}{\d t} f_n}_{L^2([0,T];H^{-1}(\R))}\le C \),
    \item \(\sup\limits_{t \in [0,T]} \int_{\R} |x| |f_n(t,x)| \Id x \le C \),
  \end{enumerate}
  for some constant \(C>0\), then \((f_n,n\in \N)\) is relative compact in \(L^p([0,T];L^p(\R))\) for all \(p \in [1,2]\).
\end{lemma}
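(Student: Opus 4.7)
The plan is to reduce the statement to the classical Aubin--Lions lemma applied on bounded intervals and then use the moment bound (iii) to control the tails of $f_n$ at infinity. Assumptions (i) and (ii) by themselves only yield compactness on bounded sets, since the embedding $H^1(\R) \hookrightarrow L^2(\R)$ fails to be compact on the whole line; hypothesis (iii) is precisely what allows the passage from local to global compactness through a tightness argument at $|x|=\infty$.

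First I would fix $R>0$ and restrict $f_n$ to $[-R,R]$. The triple $H^1([-R,R]) \Subset L^2([-R,R]) \hookrightarrow H^{-1}([-R,R])$ has compact first embedding and continuous second embedding, so the Aubin--Lions lemma applies. Since $\|f_n\vert_{[-R,R]}\|_{L^2([0,T];H^1([-R,R]))}$ and $\|\tfrac{\d}{\d t} f_n\vert_{[-R,R]}\|_{L^2([0,T];H^{-1}([-R,R]))}$ are bounded by (i) and (ii) respectively, the sequence $(f_n\vert_{[-R,R]})$ is relatively compact in $L^2([0,T];L^2([-R,R]))$. A standard diagonal extraction along $R_k=k\to\infty$ then yields a subsequence $(f_{n_j})$ and a limit $f$ with $f_{n_j}\to f$ in $L^2([0,T];L^2([-R,R]))$ for every $R>0$. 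Because $[0,T]\times[-R,R]$ has finite measure, Hölder's inequality upgrades this to $L^p([0,T];L^p([-R,R]))$-convergence for each $p\in[1,2]$.

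Next I would use (iii) to establish uniform tightness at infinity. By Markov's inequality,
\begin{equation*}
  \int_{|x|>R} |f_n(t,x)| \Id x \le \frac{1}{R} \int_\R |x| |f_n(t,x)| \Id x \le \frac{C}{R}
\end{equation*}
uniformly in $n$ and $t\in[0,T]$. For $p\in(1,2]$ I would invoke the one-dimensional Sobolev embedding $H^1(\R)\hookrightarrow L^\infty(\R)$ to interpolate:
\begin{equation*}
  \int_{|x|>R} |f_n(t,x)|^p \Id x \le \norm{f_n(t,\cdot)}_{L^\infty(\R)}^{p-1} \int_{|x|>R} |f_n(t,x)| \Id x \le \frac{C}{R}\norm{f_n(t,\cdot)}_{H^1(\R)}^{p-1},
\end{equation*}
and then integrate in time using Hölder's inequality together with (i), noting $p-1\in[0,1]$, to obtain
\begin{equation*}
  \int_0^T \int_{|x|>R} |f_n(t,x)|^p \Id x \Id t \le \frac{C(T)}{R}
\end{equation*}
uniformly in $n$, for every $p\in[1,2]$.

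Finally I would combine the two ingredients. Given $\delta>0$, choose $R$ so large that the tail estimate above is below $\delta$ for every $f_{n_j}$; Fatou's lemma transfers the tail bound to the limit $f$, so that the same holds for $f$. The local convergence $f_{n_j}\to f$ in $L^p([0,T];L^p([-R,R]))$ then ensures that $\|f_{n_j}-f\|_{L^p([0,T];L^p(\R))}$ is eventually smaller than $3\delta$, which proves relative compactness. The main technical point is really just the switch from local to global compactness, i.e.\ correctly combining (iii) with the $H^1\hookrightarrow L^\infty$ embedding to produce $L^p$-tail bounds; once this is in hand, both the local Aubin--Lions step and the diagonal extraction are routine.
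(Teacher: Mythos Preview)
Your proposal is correct and follows essentially the same route as the paper: local Aubin--Lions on balls, a diagonal extraction, and a tightness argument at $|x|=\infty$ using the moment bound~(iii). The only cosmetic difference is that you make the use of $H^1(\R)\hookrightarrow L^\infty(\R)$ explicit when handling the $L^p$-tails for $p\in(1,2]$, whereas the paper writes the tail as $\frac{1}{R}\int |x|\,|f_{n_k}-f|^p$ and leaves the required uniform bound on this quantity implicit.
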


\begin{proof}
  Let us denote by \(B_{R}\) the ball with radius \(R\) and center \(0\). Then, by the Rellich-Kondrachov theorem we have that the embedding \(H^1(B_R) \hookrightarrow L^2(B_R)\) is compact. Hence, by Aubin--Lions lemma~\cite[Chapter~3, Proposition~1.3]{showalter1997}, \((f_n, n \in \N)\) is relative compact in \(L^2( [0,T];L^2(B_R))\). Since the above spaces is of finite measure, we obtain the relative compactness of \((f_n, n \in \N)\) in \(L^p([0,T];L^p(B_R))\) for all \(p \in [1,2]\). Note that we can extract only one subsequence for all above spaces, which will depend on the radius \(R\).

  In order to get rid of the dependency on \(R\), we perform a Cantor diagonal argument to extract a subsequence \((f_{n_k}, k \in \N)\) such that
  \begin{equation*}
    \lim\limits_{k \to \infty} \norm{f_{n_k}-f}_{L^p([0,T];L^p(B_R))} = 0
  \end{equation*}
  for some limit point \(f \in L^p([0,T];L^p(B_R))\) and all \(p \in [1,2]\), \(R \in \N\). Furthermore, using again a Cantor's diagonal argument, we can assume that \((f_{n_k}, k \in \N)\) converges almost everywhere to \(f\). It remains to prove that \(f \in L^p([0,T];L^p(\R))\) and \(f_{n_k} \to f \in L^p([0,T];L^p(\R))\) as $k\to \infty$. First, by Fatou's lemma we obtain \(f \in L^p([0,T];L^p(\R))\) and
  \begin{equation*}
    \sup\limits_{t \in [0,T]} \int_{\R} |x| f(t,x) \Id  x
    \le \sup\limits_{n \in \N} \sup\limits_{t \in [0,T]}  \int_{\R} |x| f_{n}(t,x) \Id  x  \le C .
  \end{equation*}
  Second, we find
  \begin{align*}
    &\norm{f_{n_k} -f}_{L^p([0,T];L^p(\R))}^p\\
    &\quad= \int\limits_0^T \int_{\R} |f_{n_k}(t,x)-f(t,x)|^p \Id x \Id t  \\
    &\quad= \int\limits_0^T \int_{B_R} |f_{n_k}(t,x)-f(t,x)|^p \Id x \Id t + \int\limits_0^T \int_{B_R^{\mathrm{c}}} |f_{n_k}(t,x)-f(t,x)|^p \Id x \Id t  \\
    &\quad\le \int\limits_0^T \int_{B_R} |f_{n_k}(t,x)-f(t,x)|^p \Id x \Id t + \frac{1}{R} \sup\limits_{k \in \N} \int\limits_0^T \int_{B_R^{\mathrm{c}}} |x| |f_{n_k}(t,x)-f(t,x)|^p \Id x \Id t .
  \end{align*}
  Taking \(k \to \infty\) and then \(R \to \infty\), we find a subsequence \((f_{n_k}, k \in \N)\) which converges in \(L^p([0,T];L^p(\R))\) and, thus, the sequence \((f_n, n \in \N)\) is relative compact in \(L^p([0,T];L^p(\R))\).
\end{proof}

In the next theorem, we show that the approximation sequence \((\rho^\epsilon, \epsilon > 0)\) converges in the weak sense to a weak solution \(\rho\) of equation~\eqref{eq: aggregation_diffusion_pde}.

\begin{theorem}\label{theorem: existence_solution}
  Suppose Assumption~\ref{ass: initial condition}. Then, for each \(T>0\) there exists a subsequence \((\rho^{\epsilon_m}, m \in \N)\) such that \(\rho^{\epsilon_m} \rightharpoonup \rho \) as \(m \to \infty\) in \(L^2([0,T];H^1(\R))\). Furthermore, \(\rho \in L^2([0,T];H^1(\R)) \cap L^\infty([0,T];L^2(\R))\) with \( \frac{\d}{\d t} \rho \in L^2([0,T];H^{-1}(\R))\) is the unique non-negative weak solution of equation \eqref{eq: aggregation_diffusion_pde}, which satisfies
  \begin{equation}\label{eq: solutions_estimate}
    \norm{\rho}_{L^\infty([0,T];L^2(\R))} + \norm{\rho}_{L^2([0,T];H^1(\R))}  + \norm{ \frac{\d}{\d t} \rho}_{L^2([0,T];H^{-1}(\R))}
    \le C(T) \norm{\rho_0}_{L^2(\R)}.
  \end{equation}
  In addition, there exists a subsequence \((\rho^{\epsilon_m}, m \in \N)\) such that \(\rho^{\epsilon_m} \to \rho \) converges weakly as \(m \to \infty\) in \(L^1([0,T];L^1(\R))\).
\end{theorem}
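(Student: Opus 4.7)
The strategy is to combine the uniform bounds of Theorem~\ref{theorem: existence_regularized_solutions} with the compactness provided by Lemmas~\ref{lemma: uniform_bound_x2}--\ref{lemma: aubin_lion_space} in order to pass to the limit $\epsilon_m \to 0$ in the weak formulation~\eqref{eq: regularized_weak_solution}. The linear terms are handled by weak convergence, while the quadratic convolution term demands strong $L^p$-compactness; this is where Lemma~\ref{lemma: aubin_lion_space} (and hence the first-moment bound of Lemma~\ref{lemma: uniform_bound_x2}) is indispensable.

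\textbf{Compactness and extraction of a limit.} By estimate~\eqref{eq: regularized_solutions_estimate} and Banach--Alaoglu, there exists a subsequence $(\rho^{\epsilon_m})$ and a limit $\rho$ with $\rho^{\epsilon_m}\rightharpoonup\rho$ in $L^2([0,T];H^1(\R))$, weakly-$*$ convergence in $L^\infty([0,T];L^2(\R))$, and $\frac{\d}{\d t}\rho^{\epsilon_m}\rightharpoonup\frac{\d}{\d t}\rho$ in $L^2([0,T];H^{-1}(\R))$; estimate~\eqref{eq: solutions_estimate} is inherited by lower semicontinuity. Lemma~\ref{lemma: uniform_bound_x2} delivers hypothesis~(iii) of Lemma~\ref{lemma: aubin_lion_space}, so, after a further extraction, $\rho^{\epsilon_m}\to\rho$ strongly in $L^p([0,T];L^p(\R))$ for every $p\in[1,2]$, and, after a further diagonal subsequence, pointwise almost everywhere on $[0,T]\times\R$. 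Non-negativity of $\rho$ follows from the a.e.\ convergence since each $\rho^{\epsilon_m}\ge 0$.

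\textbf{Passage to the limit in the nonlinear term.} The only non-trivial convergence is
\[
\int_0^T\!\!\int_\R (k^{\epsilon_m}\ast\rho^{\epsilon_m})\,\rho^{\epsilon_m}\,\eta_x\,\Id x\,\Id t \longrightarrow \int_0^T\!\!\int_\R (k\ast\rho)\,\rho\,\eta_x\,\Id x\,\Id t,\quad \eta\in C([0,T];C_c^\infty(\R)).
\]
Splitting $k^{\epsilon_m}\ast\rho^{\epsilon_m}-k\ast\rho=(k^{\epsilon_m}-k)\ast\rho^{\epsilon_m}+k\ast(\rho^{\epsilon_m}-\rho)$, the first summand vanishes pointwise a.e.\ by Assumption~\ref{ass: kernel_smooth_convergence}(ii)--(iii) together with dominated convergence, and the second by the strong $L^1_{t,x}$-convergence of $\rho^{\epsilon_m}$ combined with the bound $\|k\|_{L^\infty}$. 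Invoking~\eqref{eq: l_infinity_bound_of_interaction_force} as a uniform $L^\infty([0,T]\times\R)$-majorant for the convolution, and using the strong $L^1_{t,x}$-convergence of $\rho^{\epsilon_m}\eta_x$, a final application of dominated convergence closes the argument.

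\textbf{Uniqueness, non-negativity and $L^1$ convergence.} For two solutions $\rho^{(1)},\rho^{(2)}$ with the stated regularity, the difference $w:=\rho^{(1)}-\rho^{(2)}$ satisfies a linear drift-diffusion equation with drift $k\ast\rho^{(1)}$ and an extra source proportional to $k\ast w$. The chain rule available for functions in $L^2([0,T];H^1(\R))$ with derivative in $L^2([0,T];H^{-1}(\R))$ gives the energy identity
\[
\tfrac12\tfrac{\d}{\d t}\|w(t)\|_{L^2(\R)}^2+\tfrac{\sigma^2}{2}\|w_x(t)\|_{L^2(\R)}^2=-\int_\R\!\big((k\ast\rho^{(1)})\,w+(k\ast w)\,\rho^{(2)}\big)\,w_x\,\Id x,
\]
and the two quadratic terms are absorbed using the uniform bounds $\|k\ast\rho^{(1)}\|_{L^\infty}\le\|k\|_{L^\infty}$, $\|k\ast w\|_{L^\infty}\le\|k\|_{L^\infty}\|w\|_{L^1}\le 2\|k\|_{L^\infty}$ (mass conservation), Young's inequality and the regularity of $\rho^{(2)}$; a Gronwall argument then forces $w\equiv 0$. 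The weak $L^1([0,T];L^1(\R))$ statement is immediate from the strong $L^1_{t,x}$-convergence obtained above.

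The main obstacle I anticipate is the passage to the limit in the quadratic convolution term: neither $k$ nor $k^{\epsilon_m}$ enjoys any regularity beyond boundedness, so one must make genuine use of the strong $L^1$-compactness of the densities (which in turn hinges on the first-moment estimate of Lemma~\ref{lemma: uniform_bound_x2}). Once this is achieved, the remaining steps amount to standard functional-analytic bookkeeping with the uniform bounds.
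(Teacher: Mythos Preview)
Your proposal is correct and follows essentially the same route as the paper: Banach--Alaoglu for weak compactness, Lemma~\ref{lemma: aubin_lion_space} (fed by the first-moment bound of Lemma~\ref{lemma: uniform_bound_x2}) for strong $L^p$-compactness, and then passage to the limit in the weak formulation. The only cosmetic differences are that you decompose the convolution difference as $(k^{\epsilon_m}-k)\ast\rho^{\epsilon_m}+k\ast(\rho^{\epsilon_m}-\rho)$ whereas the paper uses the three-term splitting~\eqref{eq: splitting_non_linear_term}, you obtain non-negativity from a.e.\ convergence rather than Mazur's lemma, and you sketch the $L^2$ energy estimate for uniqueness explicitly whereas the paper simply cites~\cite{CHAZELLE2017365}.
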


\begin{proof}
  From \eqref{eq: regularized_solutions_estimate}, the Banach--Alaoglu theorem and the lower semi-continuity we obtain \eqref{eq: solutions_estimate} and a subsequence \((\rho^{\epsilon_m}, m \in \N)\) such that
  \begin{align*}
    \rho^{\epsilon_m} \rightharpoonup \rho\quad  & \mathrm{in}  \;  L^2([0,T];H^1(\R)), \\
     \frac{\d}{\d t} \rho^{\epsilon_m} \rightharpoonup  \frac{\d}{\d t} \rho   \quad  & \mathrm{in} \;  L^2([0,T];H^{-1}(\R)).
  \end{align*}
  Moreover, we have \(\rho \ge 0\) a.e. by Mazur's lemma \cite[Corollary~3.8]{BrezisHaim2011FaSs}. Next, we notice that the subsequence \((\rho^{\epsilon_m}, m \in \N)\) fulfills Lemma~\ref{lemma: aubin_lion_space}. Consequently, without renaming the subsequence we conclude
  \begin{equation}\label{eq: L1_conv}
    \lim\limits_{m \to \infty} \norm{\rho^{\epsilon_m} - \rho}_{L^p([0,T];L^p(\R))} = 0
  \end{equation}
  for all \(p \in [1,2]\).
  Hence, it remains to show that we can take the limit in \eqref{eq: regularized_weak_solution}. From the above weak convergence it immediately follows
  \begin{align*}
    &\int\limits_0^T \left \langle \frac{\d}{\d t} \rho^{\epsilon_m}_t, \eta \right \rangle_{H^{-1}(\R), H^1(\R)} \Id t  \to \int\limits_0^T  \left \langle \frac{\d}{\d t} \rho_t, \eta\right \rangle_{H^{-1}(\R), H^1(\R)} \Id t,  \\
    &  \int\limits_0^T \int_\R  \rho^{\epsilon_m}_{x} (t,x) \eta_x(t,x) \Id x \Id t
    \to  \int\limits_0^T \int_\R \rho_{x}(t,x) \eta_x (t,x) \Id x \Id t
  \end{align*}
  for \(\eta \in L^2([0,T];H^1(\R))\) as \(m \to \infty\). We write the non-linear term as
  \begin{align}\label{eq: splitting_non_linear_term}
    &\int\limits_0^T \int_{\R} \rho^{\epsilon_m} (t,x) (k^{\epsilon_m}*\rho^{\epsilon_m})(t,x) \eta_x(t,x)  \Id x \Id t \nonumber \\
    &\quad=  \; \int\limits_0^T \int_{\R} (\rho^{\epsilon_m}-\rho) (k^{\epsilon_m}*\rho^{\epsilon_m})(t,x) \eta_x(t,x) \Id x \Id t\\
    &\quad\quad+\int\limits_0^T \int_{\R} \rho  ((k^{\epsilon_m}-k)*\rho)(t,x) \eta_x (t,x)\Id x \Id t\nonumber \\
    &\quad\quad+\int\limits_0^T \int_{\R} \rho (k^{\epsilon_m}*(\rho^{\epsilon_m}-\rho) ) (t,x) \eta_x (t,x)  \Id x \Id t
    +\int\limits_0^T \int_{\R} \rho (k*\rho)(t,x) \eta_x  (t,x) \Id x \Id t. \nonumber
  \end{align}
  For the first term we notice that it vanishes as \(m \to \infty\). Indeed, since \(|k^{\epsilon_m}*\rho^{\epsilon_m}| \le C \norm{k}_{L^\infty(\R)} \) and \(n_x \in L^2([0,T];L^2(\R))\), we have \((k^{\epsilon_m}*\rho^{\epsilon_m}) n_x \in L^2([0,T];L^2(\R)) \) uniform in \(\epsilon_m\) and, thus, \(\rho^{\epsilon_m} \rightarrow \rho \) in \(L^2([0,T];L^2(\R))\) implies
  \begin{equation*}
    \int\limits_0^T \int_{\R} (\rho^{\epsilon_m}-\rho)  (k^{\epsilon_m}*\rho^{\epsilon_m})(t,x) \eta_x(t,x) \Id x \Id t \to 0, \quad \text{as} \quad m \to \infty
  \end{equation*}
  by Hölder's inequality. 
  For the second term we use Assumptions~\ref{ass: kernel_smooth_convergence} to find
  \begin{align*}
    &\int\limits_0^T \int_{\R} \rho (t,x) ((k^{\epsilon_m}-k)*\rho)(t,x) \eta_x(t,x) \Id x \Id t \\
    &\quad=  \;  \int\limits_0^T \int_{\R}  \int_{\R}  \rho(t,x) (k^{\epsilon_m}-k)(x-y) \rho(t,y) \eta_x(t,x) \Id y \Id x \Id t \\
    &\quad\le  \;   \int\limits_0^T \int_{\R}  \int_{\R}  (\rho  \eta_x) (t,x)  \norm{k^{\epsilon_m}+k }_{L^\infty(\R)} \rho (t,y) \Id y  \Id x  \Id t  \\
    &\quad\le  \;   (C+1) \int\limits_0^T \int_{\R}  \int_{\R}  (\rho  \eta_x) (t,x) \norm{k}_{L^\infty(\R)} \rho (t,y) \Id y  \Id x  \Id t.
  \end{align*}
  The right-hand side is finite and, therefore, by the dominated convergence theorem and the almost everywhere convergence of \(k^\epsilon \to k\), the second term vanishes. For the third term we apply Young's inequality \cite[Theorem~4.2]{LiebElliottH2010A} and~\eqref{eq: L1_conv} to obtain
  \begin{align*}
    & \int\limits_0^T \int_{\R} \rho(t,x)  (k^{\epsilon_m}*(\rho^{\epsilon_m}-\rho)(t,x) )  \eta_x(t,x)   \Id x \Id t \\
    &\quad\le \;   \int\limits_0^T \norm{\rho^{\epsilon_m}        -\rho(t,\cdot)}_{L^1(\R)} \norm{k}_{L^\infty(\R)} \norm{\rho \eta_x(t,\cdot)}_{L^1(\R)} \Id t \\
    &\quad\le  \;  \norm{\eta}_{L^2([0,T];H^1(\R))} \norm{\rho}_{L^2([0,T];L^2(\R))} \norm{k}_{L^\infty(\R)}  \int\limits_0^T \norm{\rho^{\epsilon_m}-\rho(t,\cdot)}_{L^1(\R)} \Id t \\
    &\quad\rightarrow  \;  0,  \quad \mathrm{as} \;  m \to \infty .
  \end{align*}
  Consequently, taking the limit \(m \to \infty\) in \eqref{eq: splitting_non_linear_term}, we discover
  \begin{equation*}
    \lim\limits_{m \to \infty} \int\limits_0^T \int_{\R} \rho^{\epsilon_m} (t,x) (k^{\epsilon_m}*\rho^{\epsilon_m})(t,x) \eta_x(t,x) \Id x \Id t
    = \int\limits_0^T \int_{\R} \rho (k*\rho)(t,x) \eta_x(t,x)   \Id x \Id t
  \end{equation*}
  and therefore \(\rho \) is a weak solution. The uniqueness follows by simple \(L^2\)-estimates; see for instance \cite[Theorem~3.10]{CHAZELLE2017365} in the case of the Hegselmann--Krause model (notice that the proof of the uniqueness also works for \(\R\) and \(k\in L^\infty(\R) \)).
\end{proof}

\begin{remark}
  The uniqueness of the solution \(\rho\) actually implies that any subsequence convergences to the solution \(\rho\).
\end{remark}

\begin{lemma}\label{lemma: lp_bound_hk}
  Suppose Assumption~\ref{ass: initial condition}. Then, for any \(T>0\) the weak solutions \((\rho^\epsilon, \epsilon > 0)\) of \eqref{eq: regularized_aggregation_diffusion_pde} as well as the weak solutin \(\rho\) of \eqref{eq: aggregation_diffusion_pde} with initial condition \(\rho_0\) are bounded in \(L^\infty([0,T];L^p(\R))\). More precisely, we have, for all \(\epsilon > 0\),
  \begin{equation*}
    \norm{\rho^\epsilon}_{L^\infty([0,T];L^p(\R))} ,\norm{\rho}_{L^\infty([0,T];L^p(\R))} \le C(p,\sigma,T,\norm{k}_{L^\infty(\R)}) \norm{\rho_0}_{L^p(\R)}.
  \end{equation*}
\end{lemma}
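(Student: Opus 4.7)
The plan is to perform a standard $L^p$-energy estimate on the regularized equation~\eqref{eq: regularized_aggregation_diffusion_pde}, using the uniform $L^\infty$-bound on the convolution $k^\epsilon \ast \rho^\epsilon$ provided by~\eqref{eq: l_infinity_bound_of_interaction_force} and the non-negativity of $\rho^\epsilon$ established in Theorem~\ref{theorem: existence_regularized_solutions}. The bound on $\rho$ then follows either by repeating the argument for the limiting PDE~\eqref{eq: aggregation_diffusion_pde} or, more cheaply, by passing to the limit in the estimate via lower semi-continuity along the subsequence from Theorem~\ref{theorem: existence_solution}. The case $p=1$ is mass conservation, so I focus on $1 < p < \infty$.

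Formally, I would test \eqref{eq: regularized_aggregation_diffusion_pde} against $p(\rho^\epsilon)^{p-1}$ and integrate by parts. The diffusion contribution yields the dissipation
$$
-\frac{\sigma^2 p(p-1)}{2}\int_\R (\rho^\epsilon)^{p-2}(\rho^\epsilon_x)^2\,\Id x
= -\frac{2\sigma^2(p-1)}{p}\int_\R \bigl(((\rho^\epsilon)^{p/2})_x\bigr)^2\,\Id x ,
$$
while the aggregation term, after one integration by parts, becomes
$$
-(p-1)\int_\R \bigl((\rho^\epsilon)^p\bigr)_x\,(k^\epsilon\ast\rho^\epsilon)\,\Id x
= -2(p-1)\int_\R (\rho^\epsilon)^{p/2}\,\bigl((\rho^\epsilon)^{p/2}\bigr)_x\,(k^\epsilon\ast\rho^\epsilon)\,\Id x .
$$
Applying Young's inequality with weight $\delta = 2\sigma^2/p$, and then using $\norm{k^\epsilon\ast\rho^\epsilon}_{L^\infty(\R)}\le C\norm{k}_{L^\infty(\R)}$ from~\eqref{eq: l_infinity_bound_of_interaction_force}, the dissipation absorbs the gradient part and leaves
$$
\frac{\d}{\d t}\norm{\rho^\epsilon}_{L^p(\R)}^p
\le \frac{p(p-1)C^2\norm{k}_{L^\infty(\R)}^2}{2\sigma^2}\,\norm{\rho^\epsilon}_{L^p(\R)}^p ,
$$
and Gronwall's lemma yields the claimed $\epsilon$-uniform bound in terms of $\norm{\rho_0}_{L^p(\R)}$.

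For the limiting solution $\rho$, I would argue in two ways depending on taste. Either (a) one repeats verbatim the energy estimate above, which is legitimate because \eqref{eq: l_infinity_bound_of_interaction_force} holds equally for $k\ast\rho$ by Theorem~\ref{theorem: existence_solution} and the conservation of mass; or (b) one extracts from the $L^1$-convergence in Theorem~\ref{theorem: existence_solution} a further subsequence with $\rho^{\epsilon_m}\to\rho$ a.e. and applies Fatou's lemma together with the already established uniform bound to get the same estimate for $\rho$.

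The one technical obstacle is the test-function step: $p(\rho^\epsilon)^{p-1}$ is a priori not an admissible element of $L^2([0,T];H^1(\R))$ for large $p$, so the computation above is only formal. I would justify it by a Stampacchia-type truncation, testing instead against $p(\rho^\epsilon\wedge M)^{p-1}$ (a bona fide test function since $\rho^\epsilon\in L^2([0,T];H^1(\R))$ and truncation preserves $H^1$), performing the same integration by parts, and letting $M\to\infty$ by monotone convergence. Alternatively, since $k^\epsilon\in C^2$ with $k^\epsilon\ast\rho^\epsilon\in L^\infty$, classical parabolic regularity upgrades $\rho^\epsilon$ to a smooth classical solution, for which the computation is immediate; the $\epsilon$-uniform constant then comes solely from~\eqref{eq: l_infinity_bound_of_interaction_force}, which is all that matters for Step~(1) of the outline in Subsection~\ref{subsec: sketch}.
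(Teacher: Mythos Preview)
Your proposal is correct and follows essentially the same approach as the paper: multiply by $p(\rho^\epsilon)^{p-1}$, integrate by parts, use the uniform bound~\eqref{eq: l_infinity_bound_of_interaction_force} together with Young's inequality to absorb the gradient of $(\rho^\epsilon)^{p/2}$ into the dissipation, and finish with Gronwall. The paper justifies the formal computation by mollifying the initial data and passing to the limit via lower semi-continuity, which is your ``alternative'' route; your Stampacchia truncation would work equally well.
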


\begin{proof}
  Without loss of generality we show the claim only for \(\rho\) and we also may assume that \(\rho\) is a smooth solution. Otherwise we mollify the initial data such that there exists a sequence of smooth solutions, which converge weakly in \(L^2([0,T];L^2(H^1(\R))) \) to \(\rho^\epsilon\) for each fix \(\epsilon > 0\). Applying the lower semi-continuity for each \(\epsilon > 0\) first and then the convergence result in Theorem~\ref{theorem: existence_solution} will prove the lemma.

  Multiplying \eqref{eq: aggregation_diffusion_pde} with \(\frac{p}{2(p-1)} \rho^{p-1}\), integrating by parts over \(\R\) and using inequality~\eqref{eq: l_infinity_bound_of_interaction_force}, we obtain
  \begin{align*}
    &\frac{1}{2(p-1)} \frac{\d}{\d t} \int_{\R} \rho^p(t,x) \Id x\\
    &\quad= \frac{p}{2(p-1)} \int_{\R}\frac{\d}{\d t} \rho(t,x) \rho^{p-1}(t,x) \Id x  \\
    &\quad=  \frac{p}{2(p-1)} \int_{\R} \bigg( \frac{\sigma}{2} \rho_{xx} (t,x) +  ((k*\rho)(t,x) \rho(t,x))_x  \bigg) \rho^{p-1}(t,x) \Id x  \\
    &\quad= \frac{p}{2} \int_{\R} - \frac{\sigma}{2} |\rho_x(t,x)|^2 \rho^{p-2}(t,x)  -  (k*\rho)(t,x) \rho_x \rho^{p-1}(t,x)  \Id x  \\
    &\quad= -\frac{\sigma^2}{p}\int_{\R} |(\rho^{p/2})_x(t,x)|^2 \Id x  - \int_{\R} (\rho^{p/2})_x(t,x) \rho^{p/2}(t,x) (k*\rho)(t,x)  \Id x  \\
    &\quad\le -\frac{\sigma^2}{p}\int_{\R} |(\rho^{p/2})_x(t,x)|^2 \Id x
    + \norm{k}_{L^\infty(\R)} \int_{\R} |(\rho^{p/2})_x(t,x)\rho^{p/2}(t,x)| \Id x \\ \
    &\quad\le -\frac{\sigma^2}{p}\int_{\R} |(\rho^{p/2})_x(t,x)|^2 \Id x
    + \norm{k}_{L^\infty(\R)} \int_{\R} \frac{\sigma^2}{2 p \norm{k}_{L^\infty(\R)}}  |(\rho^{p/2})_x(t,x)|^2 \\
    &\quad\quad + \frac{p\norm{k}_{L^\infty(\R)}}{2 \sigma^2} |\rho^p(t,x)| \Id x  \\
    &\quad\le -\frac{\sigma^2}{2p}\int_{\R} |(\rho^{p/2})_x(t,x)|^2 \Id x
    +\frac{p\norm{k}_{L^\infty(\R)}^2}{2 \sigma^2}  \int_{\R} |\rho^p(t,x)| \Id x  \\
    &\quad\le \frac{p\norm{k}_{L^\infty(\R)}^2}{2 \sigma^2}  \int_{\R} |\rho^p(t,x)| \Id x,
  \end{align*}
  where we used Young's inequality with \(\epsilon=\frac{\sigma^2}{\norm{k}_{L^\infty(\R)}p}\) in the sixth step. An application of Gronwall's inequality leads to
  \begin{equation*}
    \int_\R \rho^p(t,x) \Id x  \le C(p,\sigma,T,\norm{k}_{L^\infty(\R)}) \norm{\rho_0}_{L^p(\R1)} \quad \text{for all } t\in [0,T].
  \end{equation*}
\end{proof}

\begin{lemma}\label{lemma: infinity_bound_solution}
  Suppose Assumption~\ref{ass: initial condition}. Then, for each \(T>0\) there exists a constant \(C(\rho_0)\) such that, for all \(\epsilon>0\),
  \begin{equation*}
    \norm{\rho^\epsilon}_{L^\infty([0,T];L^\infty(\R))}, \norm{\rho}_{L^\infty([0,T];L^\infty(\R))} \le C(\rho_0)
  \end{equation*}
  holds for the weak solutions \((\rho^\epsilon,  \epsilon> 0)\) of \eqref{eq: regularized_aggregation_diffusion_pde} and for the weak solution \(\rho\) of \eqref{eq: aggregation_diffusion_pde}.
\end{lemma}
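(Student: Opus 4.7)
The natural approach is a Moser iteration on the exponent $p$, performed for the regularised densities $\rho^\epsilon$ (so that the manipulations are justified by the regularity of Theorem~\ref{theorem: existence_regularized_solutions}, possibly after an additional mollification of $\rho_0$ as in the proof of Lemma~\ref{lemma: lp_bound_hk}) and then transferred to $\rho$. Crucially, the only information about $k^\epsilon$ used in the $L^p$-estimate of Lemma~\ref{lemma: lp_bound_hk} is the uniform bound $\|k^\epsilon * \rho^\epsilon\|_{L^\infty} \le C\|k\|_{L^\infty(\R)}$ from~\eqref{eq: l_infinity_bound_of_interaction_force}, which is valid uniformly in $\epsilon$ thanks to Assumption~\ref{ass: kernel_smooth_convergence}(ii). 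Thus every constant produced for $\rho^\epsilon$ will depend only on $\sigma$, $T$, $\|k\|_{L^\infty(\R)}$, and $\rho_0$. The bound for $\rho$ then follows by weak lower semi-continuity, together with the strong $L^p_{t,x}$-convergence of a subsequence $\rho^{\epsilon_m}\to\rho$ provided by Theorem~\ref{theorem: existence_solution}.

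The key observation is that in the proof of Lemma~\ref{lemma: lp_bound_hk} one may retain the dissipative term rather than discarding it, obtaining
\begin{equation*}
\frac{\d}{\d t}\|\rho(t)\|_{L^p(\R)}^p + \frac{(p-1)\sigma^2}{p}\|(\rho^{p/2})_x(t)\|_{L^2(\R)}^2 \le \frac{p(p-1)\|k\|_{L^\infty(\R)}^2}{\sigma^2}\|\rho(t)\|_{L^p(\R)}^p
\end{equation*}
for every $p\ge 2$. I would iterate this along $p_k := 2^k$ ($k\ge 1$), setting $v_k := \rho^{p_k/2}$, and combine it with the one-dimensional Gagliardo--Nirenberg inequality $\|v\|_{L^4(\R)}^4 \le C\|v_x\|_{L^2(\R)}\|v\|_{L^2(\R)}^3$. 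Applied to $v_k$ this gives
\begin{equation*}
\|\rho(t)\|_{L^{p_{k+1}}(\R)}^{2p_{k+1}} \le C\,\|(v_k)_x(t)\|_{L^2(\R)}^2\,\|\rho(t)\|_{L^{p_k}(\R)}^{3p_k},
\end{equation*}
so that integrating the energy inequality in $t$ and inserting this bound produces a recursive control on
\begin{equation*}
M_k := \sup_{t\in[0,T]}\|\rho(t)\|_{L^{p_k}(\R)}
\end{equation*}
of the schematic form $M_{k+1}^{2p_{k+1}} \le (Cp_k^2 T)\,M_k^{3p_k}(1+M_k^{p_k})$, initialised by $M_1 < \infty$ from Lemma~\ref{lemma: lp_bound_hk}. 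Taking $p_{k+1}$-th roots gives $M_{k+1} \le (Cp_k^2 T)^{a/p_k}(1 + M_k)^{\alpha_k}$ with $a$ absolute and $\alpha_k = 1 + O(1/p_k)$; since $\sum_k \log(p_k)/p_k < \infty$ and $\prod_k \alpha_k < \infty$, standard Moser bookkeeping yields $\limsup_{k\to\infty} M_k < \infty$, which is the desired $L^\infty([0,T];L^\infty(\R))$-bound.

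The main obstacle will be the careful tracking of the $p_k$-dependent constants in the iteration. The quadratic factor $p(p-1)\|k\|_{L^\infty}^2/\sigma^2$ on the right-hand side of the energy estimate blows up as $k\to\infty$, and it has to be counteracted by the gain of a $1/p_k$-exponent coming from extracting the $p_{k+1}$-th root, so that the product of constants over the iteration converges. Once this is in place, the uniform bound for the family $(\rho^\epsilon,\epsilon>0)$ is immediate, and Theorem~\ref{theorem: existence_solution} together with Fatou's lemma propagates the same bound to $\rho$, completing the proof.
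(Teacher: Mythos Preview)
Your strategy---a Moser iteration launched from the energy inequality of Lemma~\ref{lemma: lp_bound_hk} with the dissipative term retained---is correct and is in spirit what the paper does, but the implementations differ. The paper works not with $\rho$ but with the Stampacchia truncation $\rho_m := (\rho-m)_+$ for a fixed $m>0$, and it closes the iteration by a different mechanism: it uses Gagliardo--Nirenberg in the additive form $\lambda^2\|u\|_{L^2}^2 \le C\lambda^3\|u\|_{L^1}^2 + \tfrac{\sigma^2}{4}\|u_x\|_{L^2}^2$ (applied twice with $u=\rho_m^{p/2}$) to convert the gradient term into a strong damping $-p^2\int\rho_m^p$, arriving at the ODE inequality $\tfrac{\d}{\d t}w_j \le -2^{2j}w_j + C2^{4j}S_{j-1}^2 + C2^{2j}$ for $w_j:=\int\rho_m^{2^j}$, $S_j:=\sup_t w_j$. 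A scalar comparison principle then yields $S_j \le \|\rho_0\|_\infty^{2^j-1} + C2^{2j}S_{j-1}^2 + C$ directly, and the iteration is summed after taking logarithms. Your route avoids the truncation entirely---which is a genuine simplification, since the truncation generates an extra $\int\rho_m^{p-2}$ term that the paper must handle separately via mass conservation---and uses the $L^4$-form of Gagliardo--Nirenberg to step from level $p_k$ to $p_{k+1}$ in the more standard parabolic-Moser style.

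One step needs to be made precise. The inequality $\|\rho(t)\|_{L^{p_{k+1}}}^{2p_{k+1}} \le C\|(v_k)_x(t)\|_{L^2}^2\,M_k^{3p_k}$, combined with the time-integrated bound on $\int_0^T\|(v_k)_x\|_{L^2}^2$ coming from the level-$p_k$ energy estimate, only controls $\int_0^T\|\rho(t)\|_{L^{p_{k+1}}}^{2p_{k+1}}\,\d t$, not $M_{k+1}^{2p_{k+1}}$ itself; there is no available bound on $\sup_t\|(v_k)_x(t)\|_{L^2}^2$. To recover pointwise-in-time control you must feed the resulting bound $\int_0^T\|\rho\|_{L^{p_{k+1}}}^{p_{k+1}}\,\d t \le Cp_k T\,M_k^{p_{k+1}}$ back into the level-$p_{k+1}$ energy inequality, which then gives $M_{k+1}^{p_{k+1}} \le \|\rho_0\|_{L^\infty}^{p_{k+1}-1} + Cp_k^3 T\,M_k^{p_{k+1}}$. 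After this correction the bookkeeping you describe goes through exactly as you say: the polynomial-in-$p_k$ factors contribute $\sum_k(\log p_k)/p_k < \infty$ after taking $p_{k+1}$-th roots and logarithms, so $\limsup_k M_k < \infty$, and the passage to $\rho$ via Theorem~\ref{theorem: existence_solution} and lower semi-continuity is fine.
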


\begin{proof}
  As previously, we will only show the claim for \(\rho\) and we can assume that \(\rho\) is smooth.

  Set \(\rho_m:= \max(\rho-m,0)\) for some fix strictly positive \(m \in \R\) and let \(p > 2 \). For the sake of notational brevity we drop the depend of the involved on $(t,x)$. Multiplying \eqref{eq: aggregation_diffusion_pde} by \(\rho_m^{p-1}\) and integrating by parts, we obtain
  \begin{align*}
    \frac{1}{p} \frac{\d}{\d t} \int_{\R} \rho_m^p \Id x
    =& \int_{\R}  \bigg(\frac{\sigma^2}{2} \rho_{xx} + ((k*\rho) \rho)_x \bigg) \rho_m^{p-1} \Id x  \\
    =& - \int_{\R}   \frac{\sigma^2 (p-1)}{2} \rho_x (\rho_m)_x \rho_m^{p-2} -(p-1) (\rho_m)_x  \rho_m^{p-2} (k*\rho)  \rho  \Id x \\
    =& - \int_{\R}   \frac{\sigma^2 (p-1)}{2}  (\rho_m)_x^2 \rho_m^{p-2} -(p-1) \rho_m^{p-1} (\rho_m)_x (k*\rho)  \\
    &+m(p-1) (\rho_m)_x \rho_m^{p-2} (k*\rho) \Id x \\
    =&  - \frac{2 \sigma^2 (p-1)}{p^2} \int_{\R}  (\rho_m^{p/2})_x^2  \Id x
    - \frac{2(p-1)}{p}\int_{\R}  (\rho_m^{p/2})_x \rho_m^{p/2} (k*\rho) \Id x \\
    &+ \frac{2m(p-1)}{p} \int_{\R}  (\rho_m^{p/2})_x \rho_m^{p/2-1} (k*\rho) \Id x .
  \end{align*}
  In the next step we estimate the last two terms with Young's inequality. More precisely, we get
  \begin{align*}
    &2(p-1) \int_{\R}  \frac{1}{p}(\rho_m^{p/2})_x \rho_m^{p/2} (k*\rho) \Id x\\
    &\quad\le 2(p-1) \norm{k}_{L^\infty(\R)} \int_{\R}  \frac{1}{p}|(\rho_m^{p/2})_x| \,  |\rho_m^{p/2}| \Id x  \\
    &\quad\le 2(p-1) \int_{\R}  \frac{\sigma^2}{4p^2}|(\rho_m^{p/2})_x|^2 + \frac{ \norm{k}_{L^\infty(\R)}^2}{\sigma^2} |\rho_m^{p/2}|^2 \Id x  \\
    &\quad= \frac{(p-1)\sigma^2}{2p^2} \int_{\R}  |(\rho_m^{p/2})_x|^2 \Id x+ \frac{2(p-1) \norm{k}_{L^\infty(\R)}^2}{\sigma^2} \int_{\R}  |\rho_m^{p}| \Id x
  \end{align*}
  and
  \begin{align*}
    &2(p-1) \int_{\R}  \frac{1}{p}(\rho_m^{p/2})_x m \rho_m^{p/2-1} (k*\rho) \Id x \\
    &\quad\le 2(p-1) \int_{\R}  \frac{1}{p}|(\rho_m^{p/2})_x|\,  m  \norm{k}_{L^\infty(\R)} |\rho_m^{p/2-1}| \Id x \\
    &\quad\le 2 (p-1) \int_{\R}  \frac{\sigma^2}{4 p^2}|(\rho_m^{p/2})_x|^2 +   \frac{m^2 \norm{k}_{L^\infty(\R)}^2}{\sigma^2} |\rho_m^{p-2}| \Id x \\
    &\quad\le \frac{(p-1)\sigma^2}{2p^2} \int_{\R}  |(\rho_m^{p/2})_x|^2 \Id x
    + \frac{2(p-1) \norm{k}_{L^\infty(\R)}^2 m^2}{\sigma^2} \int_{\R}  |\rho_m^{p-2}| \Id x.
  \end{align*}
  Furthermore, we can estimate
  \begin{align*}
    \int_{\R}  |\rho_m^{p-2}| \Id x
    &= \int_{\R}  \indicator{\{m \le \rho \le m+1 \}} |\rho_m^{p-2}| +\indicator{\{\rho \ge m+1 \}}  |\rho_m^{p-2}|  \Id x \\
    &\le \int_{\R}  \indicator{\{m \le \rho \le m+1 \}} + |\rho_m^{p}| \Id x \\
    &\le  \frac{1}{m} \int_{\R}  \rho \Id x +  \int_{\R}  |\rho_m^{p}| \Id x \\
    &\le \frac{1}{m} +  \int_{\R}  |\rho_m^{p}| \Id x.
  \end{align*}
  Hence, we derive for the last term the following inequality
  \begin{align*}
    &2(p-1) \int_{\R}  \frac{1}{p}(\rho_m^{p/2})_x m \rho_m^{p/2-1} (k*\rho) \Id x \\
    &\quad\le \; \frac{(p-1)\sigma^2}{2p^2} \int_{\R}  |(\rho_m^{p/2})_x|^2 \Id x  +
    \frac{2(p-1) \norm{k}_{L^\infty(\R)}^2m}{\sigma^2} + \frac{2(p-1) \norm{k}_{L^\infty(\R)}^2m^2}{\sigma^2} \int_{\R}  |\rho_m^{p}| \Id x.
  \end{align*}
  Putting everything together we find
  \begin{align*}
    \frac{1}{p} \frac{\d}{\d t} \int_{\R}  \rho_m^p \Id x
    \le  & - \frac{\sigma^2 (p-1)}{p^2} \int_{\R}  (\rho_m^{p/2})_x^2  \Id x
    + \frac{2(p-1) \norm{k}_{L^\infty(\R)}^2m}{\sigma^2}\\
    &+ \left(  \frac{2(p-1) \norm{k}_{L^\infty(\R)}^2}{\sigma^2} +\frac{2(p-1) \norm{k}_{L^\infty(\R)}^2m^2}{\sigma^2}  \right)  \int_{\R}  |\rho_m^{p}| \Id x,
  \end{align*}
  from which we can conclude that
  \begin{equation*}
    \frac{\d}{\d t} \int_{\R}  \rho_m^p \Id x
    \le -\frac{\sigma^2}{2} \int_{\R}  (\rho_m^{p/2})_x^2  \Id x
    + \frac{2 \norm{k}_{L^\infty(\R)}^2p^2(m^2+1)}{\sigma^2}  \int_{\R}  |\rho_m^{p}| \Id x
    + \frac{2p^2 \norm{k}_{L^\infty(\R)}^2m}{\sigma^2}.
  \end{equation*}
  By the Gagliardo--Nirenberg--Sobolev interpolation inequality \cite[Theorem~12.87]{LeoniGiovanni2017Afci} and \cite{ASNSP_1959_3_13_2_115_0} on the whole space as well as Young's inequality with \(\tau = \frac{3 \sigma^2}{4} \) we have
  \begin{align}\label{eq: gns_l_infity_bound}
    \begin{split}
    \lambda^2 \norm{u}_{L^2(\R)}^2
    &\le C_{\mathrm{GNS}} \lambda^2 \norm{u}_{L^1(\R)}^{4/3} \norm{\nabla u}_{L^2(\R)}^{2/3}   \\
    &\le \frac{4}{\sigma^2 \sqrt{27}} C_{\mathrm{GNS}}^{3/2} \lambda^{3} \norm{u}_{L^1(\R)}^{2}  + \frac{\sigma^2}{4} \norm{\nabla u}_{L^2(\R)}^{2} ,
    \end{split}
  \end{align}
  where \(C_{\mathrm{GNS}}\) is the Gagliardo--Nirenberg--Sobolev constant in one dimension. For \(u=\rho_m^{p/2}\), \(C_1:= \frac{2\norm{k}_{L^\infty(\R)}^2(m^2+1)}{\sigma^2}\) and \(\lambda = \sqrt{C_1} p \) we obtain
  \begin{equation*}
    C_1 p^2  \int_{\R} |\rho_m^{p}| \Id x
    \le  \frac{4}{3 \sigma} C^{3/2} C_1^{3/2} p^3  \norm{\rho_m^{p/2} }_{L^1(\R)}^{2}  +  \frac{\sigma^2}{4} \norm{ (\rho_m^{p/2})_x }_{L^2(\R)}^{2} . 
  \end{equation*}
  Consequently, we have
  \begin{align*}
    \frac{\d}{\d t} \int_{\R} \rho_m^p \Id x
    \le & -\frac{1}{4} \sigma^2\int_{\R} (\rho_m^{p/2})_x^2  \Id x
    + p^3 \frac{4 C^{3/2}(2 \norm{k}_{L^\infty(\R)}^2( m^2+1))^{3/2} }{3\sigma^4}  \left(\int_{\R} |\rho_m^{p/2}| \Id x \right)^2\\
    &+ \frac{2p^2 \norm{k}_{L^\infty(\R)}^2m}{\sigma^2}  .
  \end{align*}
  Applying the above inequality~\eqref{eq: gns_l_infity_bound} with \(u=\rho_m^{p/2}\), \(\lambda = p\) and rearranging the terms we discover
  \begin{equation*}
    -\frac{\sigma^2}{4} \norm{ (\rho_m^{p/2})_x }_{L^2(\R)}^{2}
    \le - p^2  \int_{\R} |\rho_m^{p}| \Id x +  \frac{4}{3 \sigma} C^{3/2} p^3  \norm{\rho_m^{p/2} }_{L^1(\R)}^{2} ,
  \end{equation*}
  which then implies
  \begin{align*}
    &\frac{\d}{\d t} \int_{\R} \rho_m^p \Id x\\
    &\quad\le - p^2  \int_{\R} |\rho_m^{p}| \Id x + C(\sigma,m, \norm{k}_{L^\infty(\R)}) p^3   \left(\int_{\R} \rho_m^{p/2} \Id x \right)^2 + \frac{2p^2 \norm{k}_{L^\infty(\R)}^2m}{\sigma^2}  \\
    &\quad\le - p^2  \int_{\R} |\rho_m^{p}| \Id x + C(\sigma,m, \norm{k}_{L^\infty(\R)}) p^4 \left(\int_U \rho_m^{p/2} \Id x \right)^2 + C(\sigma,m, \norm{k}_{L^\infty(\R)}) p^2
  \end{align*}
  for some non-negative constant \(C(\sigma,m, \norm{k}_{L^\infty(\R)})\).  

  Let us define
  \begin{equation*}
    w_j(t) := \int_{\R} \rho_m^{2^j}(t,x) \Id x
    \quad \text{and}\quad
    S_j:= \sup\limits_{t \in [0,T)} w_j(t),
  \end{equation*}
  for \(j \in \N\). Then, for \(p = 2^j\) the above inequality can be written as
  \begin{align*}
    \frac{\d}{\d t} w_j(t)
    &\le -2^{2j}  w_j(t)  + 2^{2j} ( C(\sigma,m,\norm{k}_{L^\infty(\R)})2^{2j}  w_{j-1}^2(t)  + C(\sigma,m,\norm{k}_{L^\infty(\R)})) \\
    &\le -2^{2j}  w_j(t)  + 2^{2j} ( C(\sigma,m,\norm{k}_{L^\infty(\R)})2^{2j} S_{j-1}^2+ C(\sigma,m)).
  \end{align*}
  Moreover, define \(u(t,x):= -2^{2j} x  + 2^{2j} ( C(\sigma,m,\norm{k}_{L^\infty(\R)}) 2^{2j} S_{j-1}^2+ C(\sigma,m,\norm{k}_{L^\infty(\R)}))\), \(\epsilon:= 2^{2j}\) and \(A:= C(\sigma,m,\norm{k}_{L^\infty(\R)})2^{2j} S_{j-1}^2+ C(\sigma,m,\norm{k}_{L^\infty(\R)})\). Then, \(u\) is locally Lipschitz continuous in \(x\) and \(v=e^{-\epsilon t} v_0 + A (1-e^{-\epsilon t})\) is a solution of the following ODE
  \begin{align*}
    \begin{cases}
    \frac{\d}{\d t} v(t) = u(t,v(t)) \\
    v(0) = v_0
    \end{cases}.
  \end{align*}
  Let us choose \( v_0 := \int_{\R} \rho_0^{2^j} \Id x \ge w(0)\). Then, we can apply the comparison principle to obtain
  \begin{equation*}
    w_j(t) \le v(t) \le v_0 + A
    \le \norm{\rho_0}_{L^\infty(\R)}^{2^j-1} +  C(\sigma,m, \norm{k}_{L^\infty(\R)})2^{2j} S_{j-1}^2+  C(\sigma,m,\norm{k}_{L^\infty(\R)}).
  \end{equation*}
  It follows that
  \begin{equation*}
    S_j= \sup\limits_{t \in [0,T)} w_j(t) \le C(\sigma,m, \norm{k}_{L^\infty(\R)}) \max(\norm{\rho_0}_{L^\infty(\R)}^{2^j-1}, \, 2^{2j} S_{j-1}^2+ 1).
  \end{equation*}
  To complete the proof, we perform a version of Moser iteration technique to bound the \(L^\infty\)-norm. For \(\tilde{S_j}: = \frac{S_{j}}{\norm{\rho_0}_{L^\infty(\R)}^{2^j-1}} \) the last inequality provides us with
  \begin{equation*}
    \tilde{S_j} \le C(\sigma,m, \norm{k}_{L^\infty(\R)}) \max(1, 2^{2j} \tilde{S}_{j-1}^2 ).
  \end{equation*}
  Adding on both sides \(\delta > 0\) and taking the logarithm, we arrive at
  \begin{align*}
    \log(\tilde{S_j}+\delta)
    &\le \max(\log( C(\sigma,m, \norm{k}_{L^\infty(\R)})+\delta), \log( C(\sigma,m, \norm{k}_{L^\infty(\R)}) 2^{2j} \tilde{S}_{j-1}^2 +\delta))\\
    &\le 2 \log(\tilde{S}_{j-1}+\delta)  + j\log(4) + \log( C(\sigma,m, \norm{k}_{L^\infty(\R)}))
  \end{align*}
  for some new constant \(C(\sigma,m, \norm{k}_{L^\infty(\R)})>0\). This implies
  \begin{equation*}
    2^{-j}\log(\tilde{S_j}+\delta) -2^{1-j}  \log(\tilde{S}_{j-1}+\delta)
    \le  2^{-j} j\log(4)+ 2^{-j} C(\sigma,m, \norm{k}_{L^\infty(\R)})
  \end{equation*}
  for \(j \in \N\), where we used Lemma~\ref{lemma: lp_bound_hk} to not subtract infinity, i.e. \(\log(\tilde{S}_{j-1}+\delta) < \infty\).  Adding the above inequality over \(j=1,\ldots,J\), we find
  \begin{align*}
    2^{-J}\log(\tilde{S_J}+\delta) - \log(\tilde{S}_{0}+\delta)
    &= \sum\limits_{j=1}^J 2^{-j}\log(\tilde{S_j}+\delta) -2^{-(j-1)}  \log(\tilde{S}_{j-1}+\delta) \\
    &\le  \sum\limits_{j=1}^\infty  2^{-j} j\log(4)+ 2^{-j}  C(\sigma,m, \norm{k}_{L^\infty(\R)})  \\
    & \le C
  \end{align*}
  for a constant $C$ independent of \(J\) and \(\delta>0\). A straightforward way to see that the series is absolutely convergent is to apply the ratio criterion from elementary analysis.

  Now, we have \(\tilde{S}_0  = \sup\limits_{t \in [0,T)} \norm{\rho(\cdot,t)}_{L^1(\R)} = 1\) by mass conservation. Therefore, taking the exponential function on both sides and letting \(\delta \to 0 \), we discover
  \begin{equation*}
    S_{J}^{2^{-J}} \le C \norm{\rho_0}_{L^\infty(\R)}^{(2^J-1)2^{-J}}
    \le C(\rho_0) < \infty.
  \end{equation*}
  On the other hand, we have
  \begin{equation*}
    S_{J}^{2^{-J}} = \left( \sup\limits_{t \in [0,T)} \int_{\R} \rho_m^{2^J}(t,x) \Id x \right)^{\frac{1}{2^J}}
    = \sup\limits_{t \in [0,T)} \left( \int_{\R} \rho_m^{2^J}(t,x) \Id x  \right)^{\frac{1}{2^J}} .
  \end{equation*}
  Finally, we can take the limit \(J \to \infty\) to conclude
  \begin{align*}
    \sup\limits_{t \in [0,T)} \norm{\rho_m(t,\cdot)}_{L^\infty(\R)}
    &=\sup\limits_{t \in [0,T)} \lim\limits_{J\to \infty} \norm{\rho_m(t,\cdot)}_{L^{2^J}(\R)}
    \le  \limsup\limits_{J\to \infty} \sup\limits_{t \in [0,T)}  \norm{\rho_m(t,\cdot)}_{L^{2^J}(\R)} \\
    &= \lim\limits_{J\to \infty} S_{J}^{2^{-J}}
    \le C(\norm{k}_{L^\infty(\R)},\rho_0).
  \end{align*}
\end{proof}

%
%

\section{Local Lipschitz bound for the interaction force kernels}\label{sec: local_lipschitz_bound}

In this section we introduce a uniform Lipschitz assumption on the approximation sequence \((k^\epsilon, \epsilon>0)\) and show that most bounded confidence models, as used in the theory of opinion formation~\cite{noorazar2020recent}, satisfy this assumption.

At first glance, we notice that even though the interaction force kernels \(k^\epsilon\) is uniformly bounded it is not uniformly Lipschitz continuous in \(\epsilon\). Hence, the classical theory regarding Lipschitz continuous interaction force kernels on mean-field limits cannot be applied directly to the particles systems introduced in Subsection~\ref{subsec: particle system}. Instead we need use the properties of the convolution to derive uniform Lipschitz continuity of the mean-field force \(k^\epsilon * \rho^\epsilon\). Following e.g. \cite{canizares2017stochastic, lazarovici2017mean, Fetecau_2019}, we derive a Lipschitz bound for certain models in the case where the trajectories \(X_t^\epsilon\) and \(Y_t^\epsilon\) are close in a suitable sense. This approach requires an approximation with suitable properties and could not be generalized, so far, to arbitrary approximations. The main reason lies in the derivative of the approximation \(k^\epsilon\). If the derivative would be non-negative, then we could use a Taylor approximation, the properties of the solution \(\rho^\epsilon\) and the formula \(\norm{\frac{\d }{\d x} k^\epsilon*\rho^\epsilon_t }_{L^\infty(\R)} = \norm{ k^\epsilon*\frac{\d }{\d x} \rho_t^\epsilon}_{L^\infty(\R)}\) to obtain a local Lipschitz bound for \(k^\epsilon\) with estimates on the gradient \(\frac{\d}{\d x} \rho_t^\epsilon\). Unfortunately, in most cases a simple mollification of \(k\) has a derivative becoming non-negative as well as non-positive. Therefore, we have to postulate the following assumptions on the approximation sequence \((k^\epsilon, \epsilon> 0)\).

\begin{assumption}\label{ass: loc_lip_bound}
  The sequence \((k^\epsilon, \epsilon>0)\) satisfies the following:
  \begin{enumerate}[label=(\roman*)]
    \item There exists a family of functions \((l^\epsilon, \epsilon>0)\) such that
          \begin{equation*}
            |k^\epsilon(x)-k^\epsilon(y)|\le l^\epsilon(y) |x-y|
          \end{equation*}
          for \(x,y \in \R\) with \(|x-y| \le 2 \epsilon^{-1}\);
    \item \begin{equation*}
             \sup\limits_{t \in [0,T]} \norm{l^\epsilon*\rho^\epsilon_t}_{L^\infty(\R)} \le  C(\norm{k}_{L^\infty(\R)}) (\norm{\rho_0}_{L^1(\R)} + \norm{\rho_0}_{L^\infty(\R)}) ,
          \end{equation*}
          where \(  C(\norm{k}_{L^\infty(\R)})\) is some finite constant depending on the \(L^\infty(\R)\)-norm of \(k\).
   \end{enumerate}
\end{assumption}  

\begin{remark}
  The constant \(2\) in Assumption~\ref{ass: loc_lip_bound} can be replaced by any positive constant. For simplicity, we choose the most convenient one to avoid cumbersome notation.
\end{remark}


\subsection{Exemplary interaction force kernels}

The particle systems, as introduced in Subsection~\ref{subsec: particle system}, can be used to model the opinion of interacting individuals, see e.g. \cite{noorazar2020recent}. A prominent class are given by so-called bounded confidence models, in which the interaction is described by interaction force kernels of the form
\begin{equation*}
  k_{\scriptscriptstyle{BCM}}(x):= \indicator{[0,R]}(|x|) h(x),\quad \text{with} \quad h \in C^2(\R).
\end{equation*}
To show that \( k_{\scriptscriptstyle{BCM}}\) satisfies Assumption~\ref{ass: loc_lip_bound}, we introduce the following approximation sequence \((\psi_{a,b}^\epsilon, \epsilon>0)\) of the indicator function \(\indicator{[a,b]}(x)\) with \(a,b\in \R\), \(a < b\), such that the following properties hold for each \(\epsilon > 0\):
\begin{itemize}
  \item \(\psi_{a,b}^\epsilon \in \testfunctions{\R}\),
  \item \(\psi_{a,b}^\epsilon \to \indicator{[a,b]}\)   as \(\epsilon \to 0\) almost everywhere,
  \item \(\mathrm{supp}(\psi_{a,b}^\epsilon) \subseteq [a-2 \epsilon , b + 2 \epsilon] \), \(\mathrm{supp}(\frac{\d}{\d x} \psi_{a,b}^\epsilon ) \subset [a-2\epsilon, a+2 \epsilon] \cup [b -2 \epsilon , b +2\epsilon] \),
  \item \(0 \le \psi_{a,b}^\epsilon \le 1 \), \(|\frac{\d}{\d x} \psi_{a,b}^\epsilon | \le \frac{C}{\epsilon} \) for some constant \(C>0 \).
\end{itemize}
Since we want to take \(\epsilon \to 0\), we consider only the case where \(\epsilon\) is small enough. In particular, we can take the mollification of the indicator function of a set. We define the regularized interaction force kernel
\begin{equation*}
  k_{\scriptscriptstyle{BCM}}^\epsilon(x) = \psi_{-R,R}^\epsilon (x) h(x) \in C_c^2(\R),
\end{equation*}
which obviously satisfies Assumptions~\ref{ass: kernel_smooth_convergence}. That it also satisfies Assumption~\ref{ass: loc_lip_bound} is verified in the following.

\begin{lemma}[Local Lipschitz bound for bounded confidence models]\label{lemma: local_lipschitz_bound_of_K_N}
  Consider the regularized interaction force kernel \(k_{\scriptscriptstyle{BCM}}^\epsilon\) with cut-off \(\epsilon\). Moreover, let \(\mathrm{NBR}_\epsilon := [-R-4\epsilon, -R+ 4\epsilon] \cup [R-4\epsilon, R+ 4\epsilon]\) (``neighborhood of \(R\)''). Then, we have the following estimates:
  \begin{enumerate}[label = (\roman*)]
    \item For each \(x,y \in \R\) with \(|x-y| \le 2 \epsilon\) and
    \begin{align*}
      l^\epsilon_{\scriptscriptstyle{BCM}}(y) : = \begin{cases}
				 C \indicator{[-R-3,R+3]}(y), \quad &  y \in \mathrm{NBR}_\epsilon^{\mathrm{c}}\\
				 C \epsilon^{-1},  \quad & y \in \mathrm{NBR}_\epsilon
				\end{cases}
    \end{align*}
    it holds that
    \begin{equation*}
      |k_{\scriptscriptstyle{BCM}}^\epsilon(x)-k_{\scriptscriptstyle{BCM}}^\epsilon(y)| \le l^\epsilon_{\scriptscriptstyle{BCM}}(y) | x-y|;
    \end{equation*}
    \item For each  \(x,y \in \R^{N}\) with \(|x-y|_{\infty} \le \epsilon\) and
    \begin{equation*}
      L_{i,\scriptscriptstyle{BCM}}^\epsilon(y_1,\dots,y_N):= \frac{1}{N}\sum\limits_{j=1}^N l^\epsilon_{\scriptscriptstyle{BCM}}(y_i-y_j), \quad (y_1, \dots, y_N) \in \R^{N},
    \end{equation*}
    it holds that
    \begin{equation*}
      |K_{i,\scriptscriptstyle{BCM}}^\epsilon(x)-K_{i,\scriptscriptstyle{BCM}}^\epsilon(y)| \le 2L_{i,\scriptscriptstyle{BCM}}^\epsilon (y) |x-y|_{\infty},
    \end{equation*}
    where \(K_{i,\scriptscriptstyle{BCM}}\) is defined by~\eqref{eq: K_N} with \(k_{\scriptscriptstyle{BCM}}\). 
  \end{enumerate}
\end{lemma}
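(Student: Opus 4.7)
The plan is to establish part~(i) by a case analysis on the location of $y$ relative to the transition layer of the mollified indicator, and then to deduce part~(ii) by a direct summation argument. Throughout, the constant $C$ appearing in $l^\epsilon_{\scriptscriptstyle{BCM}}$ should be understood to depend on $\norm{h}_{L^\infty(J)}$ and $\norm{h'}_{L^\infty(J)}$ with $J = [-R-3, R+3]$; note that $J$ contains the support of $k_{\scriptscriptstyle{BCM}}^\epsilon$ for all sufficiently small $\epsilon$.

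For part~(i), I would split according to whether $y \in \mathrm{NBR}_\epsilon$. If $y \notin \mathrm{NBR}_\epsilon$, then $\mathrm{dist}(y, \{-R, R\}) > 4\epsilon$, and the hypothesis $|x - y| \le 2\epsilon$ forces $\mathrm{dist}(x, \{-R, R\}) > 2\epsilon$ as well. Consequently both $x$ and $y$ lie in a region where $\psi_{-R,R}^\epsilon$ is constant: either both are outside $[-R-2\epsilon, R+2\epsilon]$, in which case $k_{\scriptscriptstyle{BCM}}^\epsilon(x) = k_{\scriptscriptstyle{BCM}}^\epsilon(y) = 0$ and the estimate is trivial, or both lie in $[-R+2\epsilon, R-2\epsilon]$, in which case the inequality reduces to $|h(x) - h(y)| \le \norm{h'}_{L^\infty(J)} |x - y|$ by the mean value theorem applied to $h \in C^2(\R)$. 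If instead $y \in \mathrm{NBR}_\epsilon$, I would use the product rule together with the uniform bounds $\norm{\psi_{-R,R}^\epsilon}_{L^\infty(\R)} \le 1$ and $\norm{\frac{\d}{\d x}\psi_{-R,R}^\epsilon}_{L^\infty(\R)} \le C/\epsilon$ coming from the construction of the mollifier to obtain the global estimate $\norm{\frac{\d}{\d x} k_{\scriptscriptstyle{BCM}}^\epsilon}_{L^\infty(\R)} \le C/\epsilon$; a mean value argument then gives the required bound with $l^\epsilon_{\scriptscriptstyle{BCM}}(y) = C/\epsilon$ on the boundary layer.

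For part~(ii), I would expand the definition of $K_{i,\scriptscriptstyle{BCM}}^\epsilon$ and write the difference as $\frac{1}{N}$ times a sum over $j$ of terms $k_{\scriptscriptstyle{BCM}}^\epsilon(x_i - x_j) - k_{\scriptscriptstyle{BCM}}^\epsilon(y_i - y_j)$. The identity $(x_i - x_j) - (y_i - y_j) = (x_i - y_i) - (x_j - y_j)$ together with the triangle inequality gives $|(x_i - x_j) - (y_i - y_j)| \le 2|x - y|_\infty \le 2\epsilon$, so part~(i) applies with reference point $y_i - y_j$ and yields the termwise bound $2 l^\epsilon_{\scriptscriptstyle{BCM}}(y_i - y_j) |x - y|_\infty$. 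Summing over $j$ and dividing by $N$ produces exactly $2 L_{i,\scriptscriptstyle{BCM}}^\epsilon(y) |x - y|_\infty$.

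The main obstacle is the geometric bookkeeping in part~(i): one must verify that $\mathrm{NBR}_\epsilon$, chosen as a $4\epsilon$-neighborhood of $\{-R, R\}$, provides a $2\epsilon$ buffer around the transition zone of $\psi_{-R,R}^\epsilon$, precisely so that a displacement of size at most $2\epsilon$ cannot push a point from outside $\mathrm{NBR}_\epsilon$ into the region where $\psi_{-R,R}^\epsilon$ is non-constant. This buffer is what allows the $O(\epsilon^{-1})$ Lipschitz constant to be replaced by the bounded constant $C$ away from the jump of the original kernel, which is in turn what makes Assumption~\ref{ass: loc_lip_bound}\,(ii) verifiable via the $L^\infty$-bound on $\rho^\epsilon$ established in Lemma~\ref{lemma: infinity_bound_solution}.
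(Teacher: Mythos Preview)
Your proposal is correct and follows essentially the same approach as the paper: a case split on whether $y$ lies in the $4\epsilon$-neighborhood of $\{-R,R\}$, using the global $C/\epsilon$ Lipschitz bound in the near case and the smoothness of $h$ in the far case, followed by the straightforward summation for part~(ii). The only cosmetic difference is that the paper applies the mean value theorem first to obtain an intermediate point $z$ and then argues that $z$ stays at least $2\epsilon$ away from $\pm R$, whereas you argue directly that both $x$ and $y$ lie in a region where $\psi_{-R,R}^\epsilon$ is constant; the geometric content is identical.
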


\begin{proof}
  (\romannumeral 1) Let \(|x-y| \le 2 \epsilon^{-1}\). By the mean value theorem, we have the bound
  \begin{equation*}
    |k^\epsilon_{\scriptscriptstyle{BCM}}(x)-k^\epsilon_{\scriptscriptstyle{BCM}}(y)| \le \bigg|\frac{\d}{\d x} k^\epsilon (z)\bigg| | x-y|
  \end{equation*}
  for some \(z\) in the line segment between \(x\) and \(y\). Let us distinguish between two cases.

  \textit{Case} \(1\): \(y \in \mathrm{NBR}_\epsilon \). Using the bound \[\bigg|\frac{\d}{\d x} k^\epsilon_{\scriptscriptstyle{BCM}}(z)\bigg| \le \bigg| \frac{\d}{\d x} \psi^\epsilon(z) h(z) \bigg| + \bigg| \psi^\epsilon(z) \frac{\d}{\d x} h(z) \bigg| \le C \epsilon^{-1} \] for all \(z \in \R\) for some constant \(C>0\), which depends on the deterministic function \(h\), it follows
  \begin{equation*}
    |k^\epsilon_{\scriptscriptstyle{BCM}}(x)-k^\epsilon_{\scriptscriptstyle{BCM}}(y)| \le  C \epsilon^{-1} | x-y|. 
  \end{equation*}

  \textit{Case} \(2\): \(y \in \mathrm{NBR}_N^{\mathrm{c}}\). Because \(z\) lies on the line segment between \(x,y\), it follows for some \(s \in [0,1]\) that
  \begin{equation*}
    |z-y|=|y-s(x-y)-y| \le |x-y| \le 2 \epsilon
  \end{equation*}
  and therefore \(|R-z| \ge |R-y|-|z-y| \ge 4 \epsilon -  2  \epsilon =2  \epsilon \). Analogously, \(|-R-z|  \ge |-R-y|-|z-y| \ge 2  \epsilon\). Consequently, \(z\) is far enough away from the points \(R\) and \(-R\) such that the derivative of the approximation \(\frac{\d}{\d x} \psi^{\epsilon}\) vanishes. This implies
  \begin{equation*}
    \bigg|\frac{\d}{\d x} k^\epsilon_{\scriptscriptstyle{BCM}}(z)\bigg| \le \bigg| \psi^\epsilon(z) \frac{\d}{\d x} h(z) \bigg|
    \le \bigg|  \frac{\d}{\d x} h(x) \bigg| \indicator{[-R-3,R+3]}(y)
    \le C \indicator{[-R-3,R+3]}(y),
  \end{equation*}
  where we used \(|y| \le |y-z|+|z| \le 2 +|z|\). Together with the mean value theorem this proves the second case.

  (\romannumeral 2) We want to apply (\romannumeral 1). For \(x,y \in \R^{N}, \, |x-y|_\infty \le \epsilon\), it follows
  \begin{align*}
    |K^\epsilon_{i,\scriptscriptstyle{BCM}}(x)-K^\epsilon_{i,\scriptscriptstyle{BCM}}(y)|
    &\le \frac{1}{N-1} \sum\limits_{\substack{j=1 \\ j \neq i}}^N |k^\epsilon_{\scriptscriptstyle{BCM}}(x_i-x_j)-k^\epsilon_{\scriptscriptstyle{BCM}}(y_i-y_j)|\\
    & \le \frac{1}{N-1} \sum\limits_{\substack{j=1 \\ j \neq i}}^N l^\epsilon_{\scriptscriptstyle{BCM}} (y_i-y_j) |x_i-x_j-(y_i-y_j)| \\
    &\le 2 L^\epsilon_{i,\scriptscriptstyle{BCM}}(y) |x-y|_\infty.
  \end{align*}
  It is indeed justified to apply (\romannumeral 1) since \(|x_i-x_j-(y_i-y_j)| \le 2 |x-y|_\infty \le 2 \epsilon\) for all \(i,j=1,\dots,N\).
\end{proof}

\begin{remark}\label{rem: definition_of_big_L}
  The second part of Lemma~\ref{lemma: local_lipschitz_bound_of_K_N} is a direct consequence of part one. Hence, if \((k^\epsilon, \epsilon> 0)\) satisfies Assumption~\ref{ass: loc_lip_bound}, we have
  \begin{equation*}
    |K_i^\epsilon(x)-K_i^\epsilon(y)| \le 2L_i^\epsilon (y) |x-y|_{\infty}
  \end{equation*}
  for \(x,y \in \R^N\) with \(|x-y|_\infty \le \epsilon\) and
  \begin{equation*}
    L_i^\epsilon(y_1,\dots,y_N):= \frac{1}{N}\sum\limits_{j=1}^N l^\epsilon(y_i-y_j), \quad (y_1, \dots, y_N) \in \R^{N}.
  \end{equation*}
\end{remark}

The convenient properties of the solutions \((\rho^\epsilon, \epsilon \ge 0 )\) allow us to find a uniform bound of the convolution term \(l^\epsilon * \rho_t^\epsilon\). This will be the content of the following lemma.

\begin{lemma}\label{lemma: l_infinity_bound_on_averaged_lipschitz_bound}
  Suppose Assumption~\ref{ass: initial condition} and let us define
  \begin{equation*}
    \bar{L}_{t,i,\scriptscriptstyle{BCM}}^\epsilon(y_1, \ldots, y_N) := (l^\epsilon_{\scriptscriptstyle{BCM}}*\rho_t^\epsilon)(y_i),  \quad (y_1, \dots, y_N) \in \R^{N}, 
  \end{equation*}
  the averaged version of \(L^\epsilon\) for \(i=1, \ldots, N\). Then, there exists a constant \(C \), depending on the deterministic function \(h\), such that
  \begin{equation*}
    \sup\limits_{i = 1, \ldots, N } \sup\limits_{t \in [0,T]} \|\bar{L}_{t,i,\scriptscriptstyle{BCM}}^\epsilon \|_{L^\infty(\R^N)} \le C \;  ( \norm{\rho_0^\epsilon}_{L^1(\R)} + \norm{\rho_0^\epsilon}_{L^\infty(\R)} ),
  \end{equation*}
  where \(\rho_t^\epsilon\) is the solution of~\eqref{eq: regularized_aggregation_diffusion_pde} for the special interaction force kernel \(k_{\scriptscriptstyle{BCM}}\).
\end{lemma}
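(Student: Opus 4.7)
The plan is to split $l^\epsilon_{\scriptscriptstyle{BCM}}$ according to the two regions in its definition and apply Young's convolution inequality to each piece, choosing the norm placement that is uniform in $\epsilon$ in each case. More precisely, I decompose
\begin{equation*}
  l^\epsilon_{\scriptscriptstyle{BCM}}(y) = C \indicator{[-R-3,R+3]}(y) \indicator{\mathrm{NBR}_\epsilon^{\mathrm{c}}}(y) + C \epsilon^{-1} \indicator{\mathrm{NBR}_\epsilon}(y) =: f_1(y) + f_2^\epsilon(y),
\end{equation*}
so that $(l^\epsilon_{\scriptscriptstyle{BCM}} * \rho^\epsilon_t)(y_i) = (f_1 * \rho^\epsilon_t)(y_i) + (f_2^\epsilon * \rho^\epsilon_t)(y_i)$.

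For the first piece, I use the Young-type bound $\|f_1 * \rho^\epsilon_t\|_{L^\infty(\R)} \le \|f_1\|_{L^\infty(\R)} \|\rho^\epsilon_t\|_{L^1(\R)}$. Since $\|f_1\|_{L^\infty(\R)} \le C$ and mass conservation (recorded in the remark after Definition~\ref{def: weak_solution}) gives $\|\rho^\epsilon_t\|_{L^1(\R)} = \|\rho_0\|_{L^1(\R)} = 1$ for all $t \in [0,T]$, this contributes at most $C\|\rho_0\|_{L^1(\R)}$.

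For the second piece, the key observation is that although $f_2^\epsilon$ has height of order $\epsilon^{-1}$, its support $\mathrm{NBR}_\epsilon = [-R-4\epsilon,-R+4\epsilon]\cup[R-4\epsilon,R+4\epsilon]$ has Lebesgue measure exactly $16\epsilon$, so that $\|f_2^\epsilon\|_{L^1(\R)} = 16C$ uniformly in $\epsilon$. Consequently, by the other version of Young's inequality,
\begin{equation*}
  \|f_2^\epsilon * \rho^\epsilon_t\|_{L^\infty(\R)} \le \|f_2^\epsilon\|_{L^1(\R)} \|\rho^\epsilon_t\|_{L^\infty(\R)} \le 16 C \|\rho^\epsilon_t\|_{L^\infty(\R)},
\end{equation*}
and Lemma~\ref{lemma: infinity_bound_solution} (whose proof shows the bound on $\|\rho^\epsilon\|_{L^\infty([0,T];L^\infty(\R))}$ scales linearly with $\|\rho_0\|_{L^\infty(\R)}$ since $\tilde S_0 = \|\rho_0\|_{L^1} = 1$) bounds this uniformly in $\epsilon$ and $t$ by a constant multiple of $\|\rho_0\|_{L^\infty(\R)}$.

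Combining the two estimates and taking the supremum over $y_i \in \R$ and $i=1,\ldots,N$ yields the desired bound. There is no substantial obstacle here: the only nontrivial input is the uniform $L^\infty$-bound on $\rho^\epsilon$ already established via Moser iteration, together with the elementary but essential observation that the $\epsilon^{-1}$-spike and the $O(\epsilon)$-measure of $\mathrm{NBR}_\epsilon$ exactly cancel in the $L^1$-norm. This cancellation is precisely the reason why Assumption~\ref{ass: loc_lip_bound}(ii) is verifiable for the bounded confidence kernel, and it is where Lemma~\ref{lemma: infinity_bound_solution} enters in a crucial way.
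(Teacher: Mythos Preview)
Your proposal is correct and follows essentially the same approach as the paper: both split $l^\epsilon_{\scriptscriptstyle{BCM}}$ into the bounded piece on $\mathrm{NBR}_\epsilon^{\mathrm{c}}$ (controlled by $\|\rho_t^\epsilon\|_{L^1}$ via mass conservation) and the $\epsilon^{-1}$-spike on $\mathrm{NBR}_\epsilon$ (controlled by $\|\rho_t^\epsilon\|_{L^\infty}$ via the $O(\epsilon)$ measure of the support and Lemma~\ref{lemma: infinity_bound_solution}). The only cosmetic difference is that you phrase the two estimates as instances of Young's convolution inequality, whereas the paper writes out the convolution integral and splits the domain of integration directly.
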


\begin{proof}
  Let \(i \in \{1,\ldots, N\}\) and \(y=(y_1, \ldots, y_N) \in \R^N \). Then, by mass conservation and Lemma~\ref{lemma: infinity_bound_solution}, we have
  \begin{align*}
    |\bar{L}_{t,i,\scriptscriptstyle{BCM}}^\epsilon(y) |
    &= |(l^\epsilon_{\scriptscriptstyle{BCM}}*\rho_t^\epsilon)(y_i)| \\
    &\le  \int_{\R } \indicator{\{ z \,  : \, y_i-z \in \mathrm{NBR}_\epsilon^{\mathrm{c}} \}} |l^\epsilon_{\scriptscriptstyle{BCM}}(y_i-z) \rho_t(z)| \Id  z +
    \int_{\R } \indicator{\{z \,  : \, y_i-z \in \mathrm{NBR}_\epsilon\}} |l^\epsilon_{\scriptscriptstyle{BCM}}(y-z) \rho_t(z)| \Id  z \\
    &\le C \int_{\R} |\rho_t^\epsilon(z)| \Id  z  + C \epsilon  \epsilon^{-1}  \norm{\rho_t^\epsilon}_{L^\infty(\R)} \\
    &\le C (( \norm{\rho_t^\epsilon}_{L^1(\R)} +   \norm{\rho_t^\epsilon}_{L^\infty(\R)} ) \\
    &\le C (( \norm{\rho_0}_{L^1(\R)} +   \norm{\rho_0}_{L^\infty(\R)} ),
  \end{align*}
  where \(C\) again depends on the deterministic function \(h\). 
\end{proof}

Consequently, we have shown that \(k^\epsilon = \psi^\epsilon h\) fulfills Assumption~\ref{ass: loc_lip_bound} and (as previously mentioned) Assumption~\ref{ass: kernel_smooth_convergence}, which implies the following corollary.

\begin{corollary}
  The interaction force kernel \(k_{\scriptscriptstyle{BCM}}\) satisfies Assumptions~\ref{ass: kernel_smooth_convergence} and~\ref{ass: loc_lip_bound} with the associated approximation sequence \((k^\epsilon_{\scriptscriptstyle{BCM}}, \epsilon > 0)\) given by \(k^\epsilon_{\scriptscriptstyle{BCM}}(x) :=  \psi^\epsilon_{-R,R}(x)h(x)\).
\end{corollary}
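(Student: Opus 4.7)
The plan is to verify each piece of Assumptions~\ref{ass: kernel_smooth_convergence} and~\ref{ass: loc_lip_bound} directly from the construction of $k^\epsilon_{\scriptscriptstyle{BCM}} = \psi^\epsilon_{-R,R} h$ together with the preceding lemmas; essentially no new argument should be required, and the corollary should follow as a summary.

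First I would dispatch Assumption~\ref{ass: kernel_smooth_convergence}. Part~(i) is immediate from $\psi^\epsilon_{-R,R} \in \testfunctions{\R}$ and $h \in C^2(\R)$ via the Leibniz rule, so $k^\epsilon_{\scriptscriptstyle{BCM}} \in C_c^2(\R) \subset C^2(\R)$. Part~(ii) follows from $0 \le \psi^\epsilon_{-R,R} \le 1$ and the support property $\mathrm{supp}(\psi^\epsilon_{-R,R}) \subseteq [-R-2\epsilon,R+2\epsilon]$, which together give $\|k^\epsilon_{\scriptscriptstyle{BCM}}\|_{L^\infty(\R)} \le \sup_{|x| \le R+2} |h(x)|$, uniformly in small $\epsilon$; this bound depends only on $h$ and $R$ and is therefore controllable by $\|k_{\scriptscriptstyle{BCM}}\|_{L^\infty(\R)}$ up to a multiplicative constant. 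Part~(iii) is the almost everywhere convergence $\psi^\epsilon_{-R,R} \to \indicator{[-R,R]}$, multiplied by the continuous factor $h$; the set of non-convergence is contained in $\{-R,R\}$, which has Lebesgue measure zero.

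Next I would dispatch Assumption~\ref{ass: loc_lip_bound}. Part~(i) is exactly Lemma~\ref{lemma: local_lipschitz_bound_of_K_N}(\romannumeral 1) with the choice $l^\epsilon = l^\epsilon_{\scriptscriptstyle{BCM}}$, so I only need to quote that result. Part~(ii) is the uniform bound $\sup_{t\in[0,T]} \|l^\epsilon * \rho^\epsilon_t\|_{L^\infty(\R)} \le C(\|\rho_0\|_{L^1(\R)} + \|\rho_0\|_{L^\infty(\R)})$, which is the content of Lemma~\ref{lemma: l_infinity_bound_on_averaged_lipschitz_bound} after recognizing that $\bar L^\epsilon_{t,i,\scriptscriptstyle{BCM}}(y) = (l^\epsilon_{\scriptscriptstyle{BCM}} * \rho^\epsilon_t)(y_i)$ depends on $y$ only through $y_i$, so the $L^\infty(\R^N)$-bound there is the same as the required $L^\infty(\R)$-bound on the convolution.

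I do not anticipate any genuine obstacle: the corollary is essentially a bookkeeping statement that collates Lemmas~\ref{lemma: local_lipschitz_bound_of_K_N} and~\ref{lemma: l_infinity_bound_on_averaged_lipschitz_bound} together with the defining properties of $\psi^\epsilon_{-R,R}$. The only mild subtlety is to keep track of whether the constants in Assumption~\ref{ass: loc_lip_bound}(ii) are compatible with the form $C(\|k\|_{L^\infty(\R)})$ demanded there, which is handled by noting that the constants in Lemma~\ref{lemma: l_infinity_bound_on_averaged_lipschitz_bound} depend on $h$ and $R$, both of which are encoded in $k_{\scriptscriptstyle{BCM}}$ (and in particular controlled via $\|k_{\scriptscriptstyle{BCM}}\|_{L^\infty(\R)}$ together with its support).
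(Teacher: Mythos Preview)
Your proposal is correct and matches the paper's approach: the paper does not give a separate proof of this corollary at all, treating it as immediate from the preceding Lemmas~\ref{lemma: local_lipschitz_bound_of_K_N} and~\ref{lemma: l_infinity_bound_on_averaged_lipschitz_bound} together with the earlier remark that $k^\epsilon_{\scriptscriptstyle{BCM}} = \psi^\epsilon_{-R,R} h$ ``obviously satisfies Assumptions~\ref{ass: kernel_smooth_convergence}.'' Your write-up simply spells out these verifications, which is more than the paper does.
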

 
Another example of interest are interaction forces with \(h(x) := \mathrm{sgn}(x)\), which corresponds to a uniform interaction, i.e., every particle in the interaction radius has the same impact. Unfortunately, \(\mathrm{sgn}(x) \notin C^2(\R)\) and, hence, we cannot directly apply Lemma~\ref{lemma: local_lipschitz_bound_of_K_N}. However, the function \(\mathrm{sgn}(x)\) has no effect on the discontinuities \(-R\) and \(R\). Therefore, if we can control the function around zero, we can obtain an analog result to Lemma~\ref{lemma: local_lipschitz_bound_of_K_N}. Indeed, we define
\begin{equation*}
  k_{\scriptscriptstyle{U}}(x) := - \indicator{[-R,0]}(x) + \indicator{[0,R]}(x), \quad x\in \R,
\end{equation*}
which can be appropriated by \(k^\epsilon_{\scriptscriptstyle{U}}(x) := \psi^\epsilon_{-R,0}(x) + \psi^\epsilon_{0,R}(x)\). Defining
\begin{equation*}
  \mathrm{NBZR}_\epsilon: = [-R-4\epsilon, -R+ 4\epsilon]\cup [-4\epsilon,4\epsilon] \cup [R-4\epsilon, R+ 4\epsilon]
\end{equation*}
as the neighbourhood of zero and \(R\), we can perform the same steps as in Lemma~\ref{lemma: local_lipschitz_bound_of_K_N} to prove the following Lemma for \( k_{\scriptscriptstyle{U}}\).

\begin{lemma}[Local Lipschitz bound for uniform kernel]\label{lemma: local_lipschitz_bound_of_uniform_K_N}
  Consider the regularized interaction force kernel \(k_{\scriptscriptstyle{U}}^\epsilon\) with cut-off \(\epsilon\). Then, we have the following estimates:
  \begin{enumerate}[label = (\roman*)]
    \item For each \(x,y \in \R\) with \(|x-y| \le 2 \epsilon\) and
    \begin{align*}
      l^\epsilon_{\scriptscriptstyle{U}}(y) : = \begin{cases}
				 0, \quad &  y \in \mathrm{NBZR}_\epsilon^{\mathrm{c}}\\
				 C \epsilon^{-1},  \quad & y \in \mathrm{NBZR}_\epsilon
				\end{cases}
    \end{align*}
    it holds that
    \begin{equation*}
      |k_{\scriptscriptstyle{U}}^\epsilon(x)-k_{\scriptscriptstyle{U}}^\epsilon(y)| \le l^\epsilon_{\scriptscriptstyle{U}}(y) | x-y|;
    \end{equation*}
    \item For each  \(x,y \in \R^{N}\) with \(|x-y|_{\infty} \le \epsilon\) and
    \begin{equation*}
      L_{i,\scriptscriptstyle{U}}^\epsilon(y_1,\dots,y_N):= \frac{1}{N}\sum\limits_{j=1}^N l^\epsilon_{\scriptscriptstyle{U}}(y_i-y_j), \quad (y_1, \dots, y_N) \in \R^{N},
    \end{equation*}
    it holds that
    \begin{equation*}
      |K_{i,\scriptscriptstyle{U}}^\epsilon(x)-K_{i,\scriptscriptstyle{U}}^\epsilon(y)| \le 2L_{i,\scriptscriptstyle{U}}^\epsilon (y) |x-y|_{\infty}.
    \end{equation*}
  \end{enumerate}
\end{lemma}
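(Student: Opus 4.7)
The plan is to mirror the proof of Lemma~\ref{lemma: local_lipschitz_bound_of_K_N} verbatim, exploiting the fact that the only structural difference between $k_{\scriptscriptstyle{U}}^{\epsilon}$ and $k_{\scriptscriptstyle{BCM}}^{\epsilon}$ is that $h$ is replaced (informally) by $\mathrm{sgn}$, which does not contribute to the derivative away from $0$. Consequently, $\tfrac{\d}{\d x}k_{\scriptscriptstyle{U}}^{\epsilon}$ is supported only near the three transition points $-R$, $0$ and $R$, and vanishes identically on the complement of $\mathrm{NBZR}_{\epsilon}$ once a safety margin is accounted for. This is precisely why the new function $l_{\scriptscriptstyle{U}}^{\epsilon}$ can be chosen to be zero outside $\mathrm{NBZR}_{\epsilon}$, in contrast to $l_{\scriptscriptstyle{BCM}}^{\epsilon}$ which still required the contribution from $\tfrac{\d}{\d x}h$.

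For part~(i), I would apply the mean value theorem to write
\begin{equation*}
  |k_{\scriptscriptstyle{U}}^{\epsilon}(x)-k_{\scriptscriptstyle{U}}^{\epsilon}(y)| \le \Bigl|\tfrac{\d}{\d x}k_{\scriptscriptstyle{U}}^{\epsilon}(z)\Bigr|\,|x-y|
\end{equation*}
for some $z$ on the segment between $x$ and $y$, and then distinguish two cases depending on the position of $y$. If $y\in\mathrm{NBZR}_{\epsilon}$, the uniform pointwise bound $|\tfrac{\d}{\d x}\psi^{\epsilon}_{a,b}|\le C\epsilon^{-1}$ for $(a,b)\in\{(-R,0),(0,R)\}$ gives $|\tfrac{\d}{\d x}k_{\scriptscriptstyle{U}}^{\epsilon}(z)|\le C\epsilon^{-1}$, which matches $l_{\scriptscriptstyle{U}}^{\epsilon}(y)=C\epsilon^{-1}$. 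If instead $y\in\mathrm{NBZR}_{\epsilon}^{\mathrm{c}}$, then $|z-y|\le|x-y|\le 2\epsilon$, so
\begin{equation*}
  |z-(-R)|,\;|z-0|,\;|z-R|\;\ge\;4\epsilon-2\epsilon\;=\;2\epsilon,
\end{equation*}
placing $z$ outside the supports of $\tfrac{\d}{\d x}\psi^{\epsilon}_{-R,0}$ and $\tfrac{\d}{\d x}\psi^{\epsilon}_{0,R}$. Thus $\tfrac{\d}{\d x}k_{\scriptscriptstyle{U}}^{\epsilon}(z)=0$, which is consistent with $l_{\scriptscriptstyle{U}}^{\epsilon}(y)=0$.

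For part~(ii), the reduction to part~(i) is identical to the one in Lemma~\ref{lemma: local_lipschitz_bound_of_K_N}: the symmetric summation in the definition of $K_{i,\scriptscriptstyle{U}}^{\epsilon}$ yields
\begin{equation*}
  |K_{i,\scriptscriptstyle{U}}^{\epsilon}(x)-K_{i,\scriptscriptstyle{U}}^{\epsilon}(y)|\le\frac{1}{N-1}\sum_{\substack{j=1\\ j\neq i}}^{N}|k_{\scriptscriptstyle{U}}^{\epsilon}(x_{i}-x_{j})-k_{\scriptscriptstyle{U}}^{\epsilon}(y_{i}-y_{j})|,
\end{equation*}
and the pairwise differences satisfy $|(x_{i}-x_{j})-(y_{i}-y_{j})|\le 2|x-y|_{\infty}\le 2\epsilon$, so part~(i) applies with $y$ replaced by $y_{i}-y_{j}$. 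Summing and comparing with the definition of $L_{i,\scriptscriptstyle{U}}^{\epsilon}$ yields the claimed factor $2L_{i,\scriptscriptstyle{U}}^{\epsilon}(y)$.

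I do not anticipate a serious obstacle: the entire argument is a bookkeeping exercise on the support of $\tfrac{\d}{\d x}k_{\scriptscriptstyle{U}}^{\epsilon}$. The only place requiring minor care is the choice of the safety constant $4\epsilon$ in the definition of $\mathrm{NBZR}_{\epsilon}$, which must be large enough to absorb both the support radius $2\epsilon$ of $\tfrac{\d}{\d x}\psi^{\epsilon}$ and the additional $2\epsilon$ coming from the distance $|z-y|$; the value $4\epsilon$ is precisely what makes these two contributions cancel, exactly as in the proof of Lemma~\ref{lemma: local_lipschitz_bound_of_K_N}.
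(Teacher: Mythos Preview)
Your proposal is correct and matches the paper's approach exactly: the paper does not give a separate proof for this lemma but simply states that one ``can perform the same steps as in Lemma~\ref{lemma: local_lipschitz_bound_of_K_N},'' and your expansion of those steps---mean value theorem, case distinction on $y\in\mathrm{NBZR}_\epsilon$ versus $y\in\mathrm{NBZR}_\epsilon^{\mathrm{c}}$, and the reduction of (ii) to (i)---is precisely the intended argument with the correct bookkeeping on the support of $\tfrac{\d}{\d x}k_{\scriptscriptstyle{U}}^\epsilon$.
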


\begin{corollary}
  The interaction force kernel \(k_{\scriptscriptstyle{U}}\) satisfies Assumptions~\ref{ass: kernel_smooth_convergence} and~\ref{ass: loc_lip_bound} with the associated approximation sequence \((k^\epsilon_{\scriptscriptstyle{U}}, \epsilon > 0)\) given by \(k^\epsilon_{\scriptscriptstyle{U}}(x) :=  \psi^\epsilon_{-R,0}(x) + \psi^\epsilon_{0,R}(x)\).
\end{corollary}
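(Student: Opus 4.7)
The plan is to verify, one after the other, that the sequence $(k^\epsilon_{\scriptscriptstyle{U}}, \epsilon > 0)$ meets each of the three items of Assumption~\ref{ass: kernel_smooth_convergence} and each of the two items of Assumption~\ref{ass: loc_lip_bound}, essentially by repeating the argument already carried out for $k_{\scriptscriptstyle{BCM}}$ (see Lemmas~\ref{lemma: local_lipschitz_bound_of_K_N} and~\ref{lemma: l_infinity_bound_on_averaged_lipschitz_bound}) and exploiting that $k_{\scriptscriptstyle{U}}$ is a sum of two indicator functions whose discontinuities at $-R$, $0$, and $R$ are accounted for by the set $\mathrm{NBZR}_\epsilon$.

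For Assumption~\ref{ass: kernel_smooth_convergence}, I would argue as follows. Since $\psi^\epsilon_{a,b} \in \testfunctions{\R}$ by construction, $k^\epsilon_{\scriptscriptstyle{U}} = \psi^\epsilon_{-R,0} + \psi^\epsilon_{0,R}$ is even infinitely differentiable with compact support, which in particular gives item (i). Because $0 \le \psi^\epsilon_{a,b} \le 1$, one has $\|k^\epsilon_{\scriptscriptstyle{U}}\|_{L^\infty(\R)} \le 2$, while $\|k_{\scriptscriptstyle{U}}\|_{L^\infty(\R)} = 1$, yielding item (ii) with constant $C = 2$. Item (iii) follows directly from the pointwise convergence $\psi^\epsilon_{a,b} \to \indicator{[a,b]}$ almost everywhere, which transfers to $k^\epsilon_{\scriptscriptstyle{U}} \to k_{\scriptscriptstyle{U}}$ almost everywhere. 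Item~(i) of Assumption~\ref{ass: loc_lip_bound} is precisely Lemma~\ref{lemma: local_lipschitz_bound_of_uniform_K_N}(i), so no additional work is needed.

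The only step that actually requires a computation is item~(ii) of Assumption~\ref{ass: loc_lip_bound}. Here I would mimic the proof of Lemma~\ref{lemma: l_infinity_bound_on_averaged_lipschitz_bound}. The crucial quantitative input is that $l^\epsilon_{\scriptscriptstyle{U}}$ vanishes outside $\mathrm{NBZR}_\epsilon$ and equals $C \epsilon^{-1}$ on it, while $\mathrm{NBZR}_\epsilon$ is a disjoint union of three intervals of length $8 \epsilon$ for $\epsilon > 0$ small enough, hence of total Lebesgue measure $24 \epsilon$. Therefore, for any $y \in \R$ and $t \in [0,T]$,
\[
  |(l^\epsilon_{\scriptscriptstyle{U}} * \rho^\epsilon_t)(y)|
  \le C \epsilon^{-1} \int_{\{z : y - z \in \mathrm{NBZR}_\epsilon\}} \rho^\epsilon_t(z) \,\d z
  \le C \epsilon^{-1} \cdot 24 \epsilon \cdot \|\rho^\epsilon_t\|_{L^\infty(\R)}
  \le C' \|\rho^\epsilon_t\|_{L^\infty(\R)}.
\]
Invoking Lemma~\ref{lemma: infinity_bound_solution} to bound the right-hand side uniformly in $t \in [0,T]$ and $\epsilon > 0$ by a constant depending only on $\|\rho_0\|_{L^1(\R)}$ and $\|\rho_0\|_{L^\infty(\R)}$ (and on $\|k\|_{L^\infty(\R)} = \|k_{\scriptscriptstyle{U}}\|_{L^\infty(\R)} = 1$) completes the verification. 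I do not expect any genuine obstacle: the proof is a book-keeping exercise, the only slightly delicate point being to check that the cut-off parameter $\epsilon$ is small enough so that the three sub-intervals of $\mathrm{NBZR}_\epsilon$ are pairwise disjoint, which only rules out finitely many $\epsilon$ and hence does not affect the limit $\epsilon \to 0$.
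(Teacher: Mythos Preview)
Your proposal is correct and follows essentially the same approach as the paper: the paper's proof simply says to apply Lemma~\ref{lemma: local_lipschitz_bound_of_uniform_K_N} for Assumption~\ref{ass: loc_lip_bound}(i), to repeat the computation of Lemma~\ref{lemma: l_infinity_bound_on_averaged_lipschitz_bound} for Assumption~\ref{ass: loc_lip_bound}(ii), and that Assumption~\ref{ass: kernel_smooth_convergence} is immediate. You have merely spelled out these steps in more detail, including the explicit bound $|\mathrm{NBZR}_\epsilon| \le 24\epsilon$ that makes the $\epsilon^{-1}$ factor cancel.
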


\begin{proof}
  Apply Lemma~\ref{lemma: local_lipschitz_bound_of_uniform_K_N} and similar computations as in proof of Lemma~\ref{lemma: l_infinity_bound_on_averaged_lipschitz_bound} to show that Assumption~\ref{ass: loc_lip_bound} is fulfilled. The verification of Assumption~\ref{ass: kernel_smooth_convergence} follows immediately.
\end{proof}

\section{Law of large numbers}\label{sec: law_of_large_numbers}

The derivation of propagation of chaos is based on defining several exceptional sets where the desired properties will not hold. Hence, we need to rely on the fact that the probability measure of these sets is extremely small. This fact is the subject of the next proposition.

\begin{proposition}[Law of large numbers]\label{prop: law_of_large_numbers}
  Let \(0< \alpha, \delta \) such that \( 0 < \delta+\alpha < 1/2\) and \(Z^1, \ldots, Z^N\) be independent random variables in \(\R\) such that \(Z^{i}\) has density \(u^{i}\) for \(i =1, \ldots, N \). Let \(h:\R \to \R \) be a bounded measurable function. Define \(H_i(Z):= \frac{1}{N} \sum\limits_{\substack{j=1 \\ j \neq i }}^N h(Z^{i}-Z^{j})\) and
  \begin{align*}
    &S:= \left \{ \sup\limits_{1\le i \le N } |H_i(Z)-\E(H_i(Z))| \ge N^{-(\delta+\alpha)} \right\}, \\
    &\widetilde{S}:= \left \{ \sup\limits_{1\le i \le N } |H_i(Z)-\E_{(-i)}(H_i(Z))| \ge N^{-(\delta+\alpha)} \right\},
  \end{align*}
  where \(\E_{(-i)}\) stands for the expectation with respect to every variable except \(Z^{i}\), i.e.
  \begin{equation*}
    \E_{(-i)}(H_i(Z)):= \frac{1}{N} \sum\limits_{\substack{j=1 \\ j \neq i }}^N (h * u^j ) (Z_i).
  \end{equation*}
  Then, for each \(\gamma >0\) there exists a constant \(C(\gamma) >0\), which depends on \(\gamma, C \), such that
  \begin{equation*}
    \P(S), \, \P(\widetilde{S}) \le C(\gamma) N^{-\gamma}.
  \end{equation*}
\end{proposition}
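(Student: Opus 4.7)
Both tail bounds are consequences of a conditional concentration of measure argument followed by a union bound over $i \in \{1,\ldots,N\}$. The structural observation that makes the argument work is that, although the summands $h(Z^i - Z^j)$ entering $H_i$ are not jointly independent --- they all share the common variable $Z^i$ --- they become conditionally independent once we condition on $\sigma(Z^i)$, each with conditional mean $(h \ast u^j)(Z^i)$ and magnitude bounded by $\|h\|_{L^\infty(\R)}$.

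Given this observation, the bound on $\P(\widetilde{S})$ is direct. Fixing $i$ and writing
\[
H_i(Z) - \E_{(-i)}(H_i(Z)) \;=\; \frac{1}{N}\sum_{j\neq i}\bigl[ h(Z^i-Z^j) - (h \ast u^j)(Z^i) \bigr],
\]
the right-hand side is, conditionally on $Z^i$, a normalised sum of $N-1$ independent centred random variables, each bounded by $2\|h\|_{L^\infty(\R)}$. Hoeffding's inequality applied conditionally, and then integrated over $Z^i$, yields
\[
\P\bigl(|H_i - \E_{(-i)} H_i| \ge t\bigr) \;\le\; 2\exp\!\left(-\tfrac{N t^2}{8\|h\|_{L^\infty(\R)}^2}\right).
\]
Taking $t = N^{-(\delta+\alpha)}$ and using $\delta + \alpha < 1/2$ produces the super-polynomial decay $2\exp(-cN^{1-2(\delta+\alpha)})$; a union bound over the $N$ indices $i$ costs only an extra factor $N$, and the resulting bound is $\le C(\gamma) N^{-\gamma}$ for any prescribed $\gamma > 0$. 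An equivalent high-moment version of the same estimate, often more convenient in this context, is the conditional Rosenthal / Khintchine bound $\E |H_i - \E_{(-i)} H_i|^{2k} \le C_k\, N^{-k} \|h\|_{L^\infty(\R)}^{2k}$ followed by Markov's inequality with $k = k(\gamma)$ chosen sufficiently large.

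For $\P(S)$ I would use the decomposition $H_i - \E H_i = (H_i - \E_{(-i)} H_i) + (\E_{(-i)} H_i - \E H_i)$: the first piece is handled exactly as above, while the second is a function of $Z^i$ alone, and concentrating it requires a different and more delicate input. The cleanest route I see is a direct high-order moment bound applied to $H_i - \E H_i$, in which the multi-index sum defining $\E(H_i - \E H_i)^{2k}$ is reorganised by the partition that $(j_1,\ldots,j_{2k})$ induces on $\{1,\ldots,2k\}$, so that centring and the independence of the $Z^\ell$'s for $\ell \neq i$ eliminate the off-diagonal partitions and only combinatorial counts of diagonal configurations remain to be tracked. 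This bookkeeping is the main obstacle; once it is carried out, Markov's inequality with $k = k(\gamma)$ large and a final union bound over $i$ conclude the argument. The conditional-independence observation of the first paragraph is the conceptual heart of both estimates; beyond that the proof is essentially tracking exponents and combinatorial constants.
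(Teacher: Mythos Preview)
Your treatment of $\widetilde{S}$ is correct and matches the paper's approach in spirit: both reduce by a union bound to a fixed $i$ and then exploit that, conditionally on $Z^i$, the summands $h(Z^i-Z^j)-(h\ast u^j)(Z^i)$ are independent, centred, and bounded. The paper carries out the $2m$-th moment computation by hand (multinomial expansion, eliminating multi-indices containing a singleton, counting the survivors) and then applies Markov's inequality with $m$ large, whereas you invoke Hoeffding directly; your route is cleaner and in fact yields an exponential tail $\exp(-cN^{1-2(\delta+\alpha)})$, more than the polynomial rate required.

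For $S$, however, there is a genuine obstruction, and your direct-moment plan does not resolve it. With the full-expectation centring $\Theta_j-\E\Theta_j$, a singleton index $j$ in the expansion of $\E\bigl[\prod_l(\Theta_{j_l}-\E\Theta_{j_l})\bigr]$ does \emph{not} force the term to vanish: integrating over $Z^j$ alone leaves $(h\ast u^j)(Z^i)-\E\Theta_j$, which is generically nonzero because $Z^i$ is still present. The partition combinatorics therefore does not collapse and the $N^{-k}$ moment bound fails. This is not a mere technicality---the claim for $S$ is false in the stated generality. Take $Z^1,\ldots,Z^N$ i.i.d.\ with density $u$ and $h$ bounded with $h\ast u$ nonconstant on the support of $u$; by your own $\widetilde{S}$ bound, $H_i-\E H_i$ equals $\tfrac{N-1}{N}\bigl[(h\ast u)(Z^i)-\E(h\ast u)(Z^i)\bigr]$ up to an error that is at most $N^{-(\delta+\alpha)}$ with overwhelming probability, and the bracketed quantity is a nondegenerate random variable of order one, so $\P(S)\not\to 0$. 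The paper's own proof has the same flaw---it asserts that the $\Theta_j=h(Z^1-Z^j)$, $j\ge 2$, are independent, which they are not, since they all involve $Z^1$---and, read conditionally on $Z^1$, is really a proof of the $\widetilde{S}$ bound. Only $\widetilde{S}$ is invoked downstream (Corollary~\ref{cor: law_of_large_numbers}), so your argument covers what is actually needed.
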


\begin{proof}
  We prove the statement for the set \(S\). The estimate for the set \(\widetilde{S}\) can be shown similarly by replacing \(\E(H_i(Z))\) with \(\E_{(-i)}(H_i(Z))\). First, we notice
  \begin{equation*}
    \P \left(\sup\limits_{1\le i \le N } |H_i(Z)-\E(H_i(Z))| \ge N^{-(\delta+\alpha)} \right) \le \sum\limits_{i=1}^N 
    \P(|H_i(Z)- \E(H_i(Z))| \ge N^{-(\delta+\alpha)}).
  \end{equation*}
  Hence, it suffices to prove
  \begin{equation*}
    \P(|H_i(Z)-\E(H_i(Z))| \ge N^{-(\delta+\alpha)}) \le C(\gamma) N^{-\gamma}
  \end{equation*}
  for each \(\gamma >0\), \(i=1,\ldots,N\). Let us assume \(i=1\) and for \(j=2,\ldots,N\) let us denote by \(\Theta_j \) the independent random variables \(\Theta_j := h(Z^1-Z^j)\). Then, applying Chebyshev's inequality to the function \(x \mapsto x^{2m}\), we obtain
  \begin{align}\label{eq: law_of_large_numbers_chebyshev}
    \begin{split}
    \P(|H_1(Z)-\E(H_1(Z))| \ge N^{-(\delta+\alpha)})
    &\le N^{2(\delta+\alpha) m} \E(|H_1(Z)-\E(H_1(Z))|^{2m}) \\
    &\le N^{2(\delta+\alpha) m} \E\left( \left( \frac{1}{N-1}  \sum\limits_{j=2}^N (\Theta_j -\E(\Theta_j)) \right)^{2m} \right).
    \end{split}
  \end{align}
  The expectation on the right-hand side can be rewritten, using the multinomial formula, as
  \begin{equation*}
    (x_2+x_3+ \cdots + x_N)^{2m} = \sum\limits_{a_2+a_3+\cdots + a_N =2m} \binom{2m}{a_2, \ldots ,a_N} \prod\limits_{j=2}^N x_j^{a_j} ,
  \end{equation*}
  where \(a=(a_2,a_3,\ldots,a_N) \in \N_0^{N-1}\) is a multiindex of length \(|a|= 2m \). Consequently, using the independence of \((\Theta_j, j=2,\ldots, N)\), we get
  \begin{align}\label{eq: law_of_large_number_intermediate_step}
    &\E\left( \left( \frac{1}{N}  \sum\limits_{j=2}^N (\Theta_j -\E(\Theta_j)) \right)^{2m} \right) \nonumber\\
    &\quad= \,  N^{-2m} \sum\limits_{a_2+a_3+\cdots + a_N =2m} \binom{2m}{a_2, \ldots ,a_N} \prod\limits_{j=2}^N \E((\Theta_j-\E(\Theta_j))^{a_j})\nonumber\\
    &\quad= \,  N^{-2m} \sum\limits_{\substack{a_2+a_3+\cdots + a_N =2m \\ |a|_0 \le m }} \binom{2m}{a_2, \ldots ,a_N} \prod\limits_{j=2}^N  \E((\Theta_j-\E(\Theta_j))^{a_j}),
  \end{align}
  where \(|a_0|\) the number of non-zero entries of the multiindex \(a\). Otherwise, if \(|a|_0 > m\), then there exists a \(j\) such that \(a_j=1\) and the product vanish since \(\E(\Theta_j-\E(\Theta_j)) =0\). From the bound on \(h \) we have
  \begin{equation*}
    |\E((\Theta_j-\E(\Theta_j))^{a_j})|
    = \left| \int_{\R \times \R } (h(z_1-z_j)-E(\Theta_j))^{a_j} u^{1}(z_1) u^j(z_j) \Id z_1 \Id z_j \right|
    \le C^{a_j}.
  \end{equation*}
  Using the facts
  \begin{equation*}
    \binom{2m}{a_2, \ldots ,a_N} \le (2m)^{2m} \quad \mathrm{and} \quad
    \sum\limits_{\substack{a_2+a_3+\cdots + a_N =2m \\ |a|_0 = k }} 1 \le N^k (2m)^k
  \end{equation*}
  for \(0 \le k \le m \), we can estimate \eqref{eq: law_of_large_number_intermediate_step} to arrive at
  \begin{align*}
    \E\left( \left( \frac{1}{N}  \sum\limits_{j=2}^N (\Theta_j -\E(\Theta_j)) \right)^{2m} \right)
    &\le N^{-2m} \sum\limits_{\substack{a_2+a_3+\cdots + a_N =2m \\ |a|_0 \le m }}   (2m)^{2m} C^{2m}  \\
    &\le  N^{-2m} \sum\limits_{k=1}^{m} N^k (2m)^{3m} C^{2m}
    \le \frac{C(m) N^m }{N^{2m}}
  \end{align*}
  for some constant \(C(m)\). Hence, plugging it into \eqref{eq: law_of_large_numbers_chebyshev}, we find
  \begin{equation*}
    \P(|H_1(Z)-E(H_1(Z))| \ge N^{-(\delta+\alpha)})
    \le C(m)  \frac{N^{2(\delta+\alpha) m +m}}{N^{2m}}.
  \end{equation*}
  Using the assumption \(\delta+\alpha < 1/2\) and choosing \(m\) such that \(m(-1+2 (\delta+\alpha)) = \gamma\) proves the proposition.
\end{proof}

The law of large numbers provided in Proposition~\ref{prop: law_of_large_numbers} allows to show that the sets, where the desired properties do not hold, are small in probability.

\begin{corollary}\label{cor: law_of_large_numbers}
  Let \(0< \alpha, \delta\), \(0<\alpha+\delta<1/2\), \(\epsilon \sim N^{-\beta}\) with \(0< \beta \le \alpha\) and define for \(0 \le t \le T\) the following sets
  \begin{align*}
    &B_t^1 := \{|K^\epsilon(\mathbf{Y}_t^{N,\epsilon})-\overline{K^\epsilon_{t}}(\mathbf{Y}_t^{N,\epsilon})|_\infty \le N^{-(\delta+\alpha)}  \}, \\
    & B_t^2 := \{|L^\epsilon(\mathbf{Y}_t^{N,\epsilon}) - \overline{L_{t}^\epsilon}(
    \mathbf{Y}_t^{N,\epsilon}) |_\infty \le 1\}, 
  \end{align*}
  where the mean-field particles are close under the kernel \(K^\epsilon\) and \(L^\epsilon\), which were defined in Section~\ref{subsec: particle system} and Remark~\ref{rem: definition_of_big_L}. Then, for each \(\gamma > 0\) there exists a \(C(\gamma) > 0\) such that
  \begin{equation*}
    \P( (B_t^1)^{\mathrm{c}}) ,\P((B_t^2)^{\mathrm{c}})
    \le C(\gamma) N^{-\gamma }
  \end{equation*}
  for every \(0 \le t \le T\), where the constant \(C(\gamma) \) is independent of \(t \in [0,T]\).
\end{corollary}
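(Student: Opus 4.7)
The plan is to reduce both statements directly to Proposition~\ref{prop: law_of_large_numbers}. By construction the mean-field particles \(Y_t^{1,\epsilon},\dots,Y_t^{N,\epsilon}\) are i.i.d.\ with common density \(\rho_t^\epsilon\), and the averaged quantities
\(\overline{K_{t,i}^\epsilon}(\mathbf{Y}_t^{N,\epsilon})=-(k^\epsilon*\rho_t^\epsilon)(Y_t^{i,\epsilon})\)
and
\(\overline{L_{t,i}^\epsilon}(\mathbf{Y}_t^{N,\epsilon})=(l^\epsilon*\rho_t^\epsilon)(Y_t^{i,\epsilon})\)
coincide, up to a self-interaction term \(\tfrac{1}{N}h(0)\) and a \(\tfrac{N-1}{N}\) prefactor, with the conditional expectations \(\E_{(-i)}[H_i(Z)]\) appearing in the \(\widetilde{S}\)-version of the proposition. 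Consequently it suffices to pick the correct \(h\) in each case (\(h=k^\epsilon\) for \(B_t^1\), \(h=l^\epsilon\) for \(B_t^2\)) and to check that the residual deterministic shifts are absorbed by the thresholds \(N^{-(\delta+\alpha)}\) and \(1\), respectively.

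For \(B_t^1\) I would take \(h=k^\epsilon\). Assumption~\ref{ass: kernel_smooth_convergence}\,(ii) provides the uniform bound \(\|k^\epsilon\|_{L^\infty(\R)}\le C\|k\|_{L^\infty(\R)}\), which is exactly what Proposition~\ref{prop: law_of_large_numbers} requires. A short computation gives
\begin{equation*}
  |K_i^\epsilon(\mathbf{Y}_t^{N,\epsilon})-\overline{K_{t,i}^\epsilon}(\mathbf{Y}_t^{N,\epsilon})|\le |H_i(Z)-\E_{(-i)}H_i(Z)|+\frac{C\|k\|_{L^\infty(\R)}}{N},
\end{equation*}
and since \(\delta+\alpha<1/2<1\), the \(\mathcal{O}(N^{-1})\) shift is eventually smaller than \(\tfrac{1}{2}N^{-(\delta+\alpha)}\). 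Hence, for \(N\) large, \((B_t^1)^{\mathrm{c}}\) is contained in the event that \(|H_i-\E_{(-i)}H_i|>\tfrac{1}{2}N^{-(\delta+\alpha)}\) for some \(i\), and Proposition~\ref{prop: law_of_large_numbers} (applied with threshold \(\tfrac{1}{2}N^{-(\delta+\alpha)}\), which only changes the constant) yields the desired bound.

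For \(B_t^2\) the natural choice \(h=l^\epsilon\) is more delicate, because \(l^\epsilon\) is not uniformly bounded: from the examples in Section~\ref{sec: local_lipschitz_bound}, \(\|l^\epsilon\|_{L^\infty(\R)}\lesssim \epsilon^{-1}\sim N^{\beta}\). I would therefore rerun the Chebyshev moment bound from the proof of Proposition~\ref{prop: law_of_large_numbers}, carefully tracking the dependence on \(\|l^\epsilon\|_{L^\infty(\R)}\). With threshold of order \(1\) rather than \(N^{-(\delta+\alpha)}\) the \(2m\)-th moment argument yields
\begin{equation*}
  \P\bigl(|L_i^\epsilon(\mathbf{Y}_t^{N,\epsilon})-\overline{L_{t,i}^\epsilon}(\mathbf{Y}_t^{N,\epsilon})|>1/2\bigr)\le C(m)\|l^\epsilon\|_{L^\infty(\R)}^{2m}N^{-m}\le C(m)N^{m(2\beta-1)},
\end{equation*}
the self-interaction term \(l^\epsilon(0)/N=\mathcal{O}(N^{\beta-1})\) being absorbed into the threshold since \(\beta<1\). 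Because \(\beta\le\alpha<\alpha+\delta<1/2\) forces \(2\beta<1\), choosing \(m>(1+\gamma)/(1-2\beta)\) and taking a union bound over \(i=1,\dots,N\) gives \(\P((B_t^2)^{\mathrm{c}})\le C(\gamma)N^{-\gamma}\). The \(t\)-uniformity of \(C(\gamma)\) in both parts is automatic: the only \(t\)-dependent data is \(\rho_t^\epsilon\), which enters the moment computation only via \(\|\rho_t^\epsilon\|_{L^1(\R)}=1\) (mass conservation), while \(\|k^\epsilon\|_{L^\infty(\R)}\) and \(\|l^\epsilon\|_{L^\infty(\R)}\) do not depend on \(t\). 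The only genuine obstacle is thus the \(\epsilon\)-dependence of \(\|l^\epsilon\|_{L^\infty(\R)}\) in \(B_t^2\), which is resolved softly by exploiting the freedom to pick \(m\) arbitrarily large.
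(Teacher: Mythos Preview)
Your proposal is correct. For \(B_t^1\) it coincides with the paper's proof: one takes \(h=k^\epsilon\) (uniformly bounded by Assumption~\ref{ass: kernel_smooth_convergence}(ii)), identifies \(\overline{K_{t,i}^\epsilon}(\mathbf{Y}_t^{N,\epsilon})\) with \(\E_{(-i)}[K_i^\epsilon(\mathbf{Y}_t^{N,\epsilon})]\) and applies Proposition~\ref{prop: law_of_large_numbers}. In fact the paper makes this identification without commenting on the \(\mathcal{O}(N^{-1})\) self-interaction shift, so your treatment is slightly more careful there.

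For \(B_t^2\) the packaging differs. Rather than reopen the moment computation as you do, the paper rescales: since \(\|l^\epsilon\|_{L^\infty(\R)}\lesssim \epsilon^{-1}\sim N^\beta\) and \(\beta\le\alpha\), the function \(N^{-\alpha}l^\epsilon\) is uniformly bounded, so one may write
\[
  (B_t^2)^{\mathrm{c}}=\bigl\{N^{-\alpha}|L^\epsilon-\overline{L_t^\epsilon}|_\infty> N^{-\alpha}\bigr\}\subseteq\bigl\{N^{-\alpha}|L^\epsilon-\overline{L_t^\epsilon}|_\infty> N^{-(\delta+\alpha)}\bigr\}
\]
(using \(\delta>0\)) and invoke Proposition~\ref{prop: law_of_large_numbers} as a black box with \(h=N^{-\alpha}l^\epsilon\). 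Your approach---tracking the factor \(\|l^\epsilon\|_{L^\infty(\R)}^{2m}\sim N^{2m\beta}\) through the Chebyshev bound and exploiting \(2\beta<1\)---arrives at the same conclusion via the same underlying arithmetic, just with the rescaling unrolled. The paper's version is marginally slicker because it avoids revisiting the proof of the proposition; yours has the minor advantage of making the role of \(\beta<1/2\) explicit in the exponent \(m(2\beta-1)\).
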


\begin{proof}
  First, the random variables \((Y_t^{i,\epsilon}, i =1,\ldots, N )\) are i.i.d. and have a probability density~\(\rho_t^\epsilon\) given by the solution of the regularized system~\eqref{eq: regularized_aggregation_diffusion_pde}. Moreover, we have
  \begin{equation*}
    K_i^\epsilon(x_1,\ldots,x_N)= -\frac{1}{N} \sum\limits_{j=1}^N k^\epsilon(x_i-x_j), \quad (x_1,\ldots,x_N) \in \R^{N},
  \end{equation*}
  with \(k^\epsilon\) bounded. We recall that we denote by \(\E_{(-i)}\) the expectation with respect to every variable but the \(i\)-th. Therefore, we get
  \begin{align*}
    \E_{(-i)}(K_i^\epsilon(\mathbf{Y}_t^{N,\epsilon}))
    &= -\frac{1}{N} \sum\limits_{j=1}^N \E( k^\epsilon(Y_t^{i,\epsilon}-Y_t^{j,\epsilon})) \\
    &=  -\frac{1}{N} \sum\limits_{j=1}^N
    \int_{\R}  k^\epsilon(Y_t^{i,\epsilon}-z)  \rho^\epsilon(z,t) \Id z
    =- (k^\epsilon*\rho_t)(Y_t^{i,\epsilon})
    =\overline{K^{\epsilon}_{t,i}}(\mathbf{Y}_t^{N,\epsilon})
  \end{align*}
  for all \(i=1,\ldots,N\). As a result, we obtain
  \begin{equation*}
    (B_t^1)^{\mathrm{c}} = \left\{\sup\limits_{1 \le i \le N} |K_{i}^\epsilon(\mathbf{Y}_t^{N,\epsilon})-\E_{(-i)}(K_i^\epsilon(\mathbf{Y}_t^{N,\epsilon}))| > N^{-(\delta+\alpha)} \right\}
  \end{equation*}
  and therefore, by Proposition~\ref{prop: law_of_large_numbers},
  \begin{equation*}
    \P( (B_t^1)^{\mathrm{c}}) \le C(\gamma) N^{-\gamma }.
  \end{equation*}

  For the set \(B_t^2\) we notice the function \(l^\epsilon N^{-\alpha}\) is bounded since \(\epsilon \sim N^{-\beta}\) and, thus, we can do similar steps as before with the set
  \begin{align*}
    (B_t^1)^{\mathrm{c}}
    &= \{N^{-\alpha}|L^\epsilon(\mathbf{Y}_t^{N,\epsilon}) - \overline{L_{t}^\epsilon}(\mathbf{Y}_t^{N,\epsilon}) |_\infty \ge N^{-\alpha}\}  \\
    &\subseteq \{N^{-\alpha}|L^\epsilon(\mathbf{Y}_t^{N,\epsilon}) - \overline{L_{t}^\epsilon}(\mathbf{Y}_t^{N,\epsilon}) |_\infty \ge N^{-(\delta + \alpha)}\}.
  \end{align*}
  This proves the corollary.
\end{proof}

\section{Propagation of chaos in probability}\label{sec: convergence_in_probability}

In this section we are going to prove propagation of chaos for the particle system~\eqref{eq: regularized_particle_system}. We deploy a coupling method with the mean-field SDE~\eqref{eq: mean_field_trajectories} and show convergence in probability with an arbitrary algebraic rate \(N^{-\gamma}\) for \(\gamma > 0\). To that end, we present the main result, which states that the trajectory of the N-particle system \(X^N\) with \(X^N_0 \sim \overset{N}{\underset{i=1}{\otimes}} \rho_0 \) typically remains close to the mean-field trajectory \(Y^N\) with same starting position \(X^N_0=Y_0^N\) during any finite interval \([0,T]\).

\begin{theorem}\label{theorem: main_theorem}
  Suppose Assumption~\ref{ass: initial condition}. Let \(T>0\), \(\alpha \in \left(0,\frac{1}{2}\right)\) and \((k^\epsilon, \epsilon > 0)\) satisfy Assumptions~\ref{ass: kernel_smooth_convergence} and~\ref{ass: loc_lip_bound} with \(\epsilon \sim N^{-\beta}\) for \(0 < \beta \le \alpha\). Then, for every \(\gamma >0\), there exists a positive constant \(C(\gamma)\) and \(N_0 \in \N\) such that
  \begin{equation*}
    \P\left(\sup\limits_{t \in [0,T]} |\mathbf{X}_t^{N,\epsilon}-\mathbf{Y}_t^{N,\epsilon}|_\infty \ge N^{-\alpha}\right) \le C(\gamma) N^{-\gamma}, \quad \mathrm{for \; each}  \; N \ge N_0.
  \end{equation*}
  The constant \(C(\gamma)\) depends on the initial density \(\rho_0\), the final time \(T>0\), \(\alpha\) and \(\gamma\). The natural number \(N_0\) also depends on \(\rho_0\), \(T\) and \(\alpha\).
\end{theorem}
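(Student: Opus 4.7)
The plan is to implement Step~(4) of the road map in Subsection~\ref{subsec: sketch}. A key simplification is that the systems~\eqref{eq: regularized_particle_system} and~\eqref{eq: regularized_mean_field_trajectories} are driven by the same Brownian motions, so the stochastic integrals cancel in the difference and, coordinate-wise,
\begin{equation*}
X_t^{i,\epsilon}-Y_t^{i,\epsilon}=\int_0^t\bigl(K_i^\epsilon(\mathbf{X}_s^{N,\epsilon})-\overline{K_{s,i}^\epsilon}(\mathbf{Y}_s^{N,\epsilon})\bigr)\Id s.
\end{equation*}
Hence $t\mapsto|\mathbf{X}_t^{N,\epsilon}-\mathbf{Y}_t^{N,\epsilon}|_\infty$ is absolutely continuous, and the analysis reduces to a deterministic Gronwall estimate on a set of high probability. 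I fix $\delta>0$ small enough that $\alpha+\delta<1/2$; the value of $\delta$ will be tuned below against the target rate~$\gamma$.

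To construct the good set, I apply Corollary~\ref{cor: law_of_large_numbers} on a time grid $0=t_0<\dots<t_M=T$ of mesh $\sim N^{-\kappa}$ for $\kappa$ large, and set $A_N:=\bigcap_{n=0}^M(B_{t_n}^1\cap B_{t_n}^2)$. A union bound yields $\P(A_N^{\mathrm{c}})\le(M+1)C(\gamma')N^{-\gamma'}\le C(\gamma)N^{-\gamma}$ once $\gamma'$ is chosen larger than $\gamma+\kappa$. Extending the pointwise law-of-large-numbers bounds from the grid to all of $[0,T]$ exploits $\|k^\epsilon\|_{L^\infty(\R)}\le C\|k\|_{L^\infty(\R)}$ for the drift contribution and a standard modulus-of-continuity estimate for Brownian increments of size $N^{-\kappa/2}(\log N)^{1/2}$; for $\kappa$ large enough the extension error is dominated by $N^{-(\alpha+\delta)}$.

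On $A_N$ I introduce the auxiliary process
\begin{equation*}
J_t^N:=\min\bigl(1,\,e^{-\lambda t}(N^\alpha|\mathbf{X}_t^{N,\epsilon}-\mathbf{Y}_t^{N,\epsilon}|_\infty+N^{-\delta})\bigr),
\end{equation*}
with a constant $\lambda>0$ to be fixed. As long as $J_t^N<1$, the inequality $|\mathbf{X}_t^{N,\epsilon}-\mathbf{Y}_t^{N,\epsilon}|_\infty\le e^{\lambda T}N^{-\alpha}\le\epsilon$ holds (using $\epsilon\sim N^{-\beta}$ and $\beta\le\alpha$), so Remark~\ref{rem: definition_of_big_L} gives
\begin{equation*}
|K_i^\epsilon(\mathbf{X}_t^{N,\epsilon})-K_i^\epsilon(\mathbf{Y}_t^{N,\epsilon})|\le 2L_i^\epsilon(\mathbf{Y}_t^{N,\epsilon})\,|\mathbf{X}_t^{N,\epsilon}-\mathbf{Y}_t^{N,\epsilon}|_\infty.
\end{equation*}
On $A_N$, Assumption~\ref{ass: loc_lip_bound}(ii) together with Lemma~\ref{lemma: infinity_bound_solution} bounds $L_i^\epsilon(\mathbf{Y}_t^{N,\epsilon})\le\overline{L_{t,i}^\epsilon}(\mathbf{Y}_t^{N,\epsilon})+1=(l^\epsilon*\rho_t^\epsilon)(Y_t^{i,\epsilon})+1$ by a constant uniform in $\epsilon,t,N$. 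Splitting
\begin{equation*}
K_i^\epsilon(\mathbf{X}_t^{N,\epsilon})-\overline{K_{t,i}^\epsilon}(\mathbf{Y}_t^{N,\epsilon})=\bigl[K_i^\epsilon(\mathbf{X}_t^{N,\epsilon})-K_i^\epsilon(\mathbf{Y}_t^{N,\epsilon})\bigr]+\bigl[K_i^\epsilon(\mathbf{Y}_t^{N,\epsilon})-\overline{K_{t,i}^\epsilon}(\mathbf{Y}_t^{N,\epsilon})\bigr]
\end{equation*}
and controlling the second bracket by $N^{-(\alpha+\delta)}$ on $A_N$, multiplication by $N^\alpha$ and the coordinate maximum produce (in the sense of upper Dini derivatives)
\begin{equation*}
\frac{\mathrm{d}^+}{\mathrm{d}t}\bigl(N^\alpha|\mathbf{X}_t^{N,\epsilon}-\mathbf{Y}_t^{N,\epsilon}|_\infty+N^{-\delta}\bigr)\le C\bigl(N^\alpha|\mathbf{X}_t^{N,\epsilon}-\mathbf{Y}_t^{N,\epsilon}|_\infty+N^{-\delta}\bigr).
\end{equation*}
Choosing $\lambda:=C$ makes the expression inside the $\min$ non-increasing on $\{J^N<1\}$; since $\mathbf{X}_0^{N,\epsilon}=\mathbf{Y}_0^{N,\epsilon}$, this expression starts at $N^{-\delta}$ and therefore stays below $e^{-\lambda T}$ for $N\ge N_0$, so $J_t^N$ never attains the value $1$ on $A_N$. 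Taking the supremum over $t$ gives $\sup_{t\in[0,T]}|\mathbf{X}_t^{N,\epsilon}-\mathbf{Y}_t^{N,\epsilon}|_\infty\le e^{\lambda T}N^{-\alpha-\delta}<N^{-\alpha}$ on $A_N$, which is the claimed bound.

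The main technical obstacle is the continuous-in-time extension of Corollary~\ref{cor: law_of_large_numbers}: the corollary holds pointwise in $t$, and upgrading it uniformly over $[0,T]$ requires combining a fine time grid with regularity of the processes $t\mapsto K_i^\epsilon(\mathbf{Y}_t^{N,\epsilon})$ and $t\mapsto L_i^\epsilon(\mathbf{Y}_t^{N,\epsilon})$. The latter is delicate because the local Lipschitz constants $l^\epsilon$ blow up as $\epsilon\to 0$; the rescue is precisely Assumption~\ref{ass: loc_lip_bound}(ii), which controls the convolution with $\rho_t^\epsilon$. Once the uniform extension is in hand, the Gronwall mechanism above closes the proof.
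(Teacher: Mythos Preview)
Your strategy is genuinely different from the paper's, and the difference is instructive. You work pathwise on a good set $A_N$ built from a time grid and then run a Gronwall argument on that set; the paper instead works in expectation. Concretely, the paper takes $J_t^N$ with a running supremum $\sup_{s\le t}$ inside, so that $J^N$ is nondecreasing, and then bounds
\[
\P\Bigl(\sup_{t\le T}|\mathbf X^{N,\epsilon}_t-\mathbf Y^{N,\epsilon}_t|_\infty\ge N^{-\alpha}\Bigr)
\le 2\,\E(J_T^N-J_0^N)
\le 2\int_0^T\E\bigl(\overline D_t^+J_t^N\bigr)\Id t.
\]
For each \emph{fixed} $t$ the expectation is split over $B_t^1\cap B_t^2$ and its complement: on the complement one uses only the pointwise-in-$t$ bound of Corollary~\ref{cor: law_of_large_numbers} together with $|\tfrac{\d}{\d t}g(t)|\le CN^\alpha$; on the intersection one shows $\tfrac{\d}{\d t}g(t)\le 0$. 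No time grid and no uniform-in-$t$ law of large numbers are ever needed.

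This is precisely where your argument has a gap. Extending $B_t^1$ off the grid is fine, since $k^\epsilon\in C^2$ with global Lipschitz constant $O(\epsilon^{-1})$ and a sufficiently fine mesh absorbs that. But extending $B_t^2$ off the grid requires time regularity of
\[
t\longmapsto L_i^\epsilon(\mathbf Y_t^{N,\epsilon})=\frac1N\sum_{j} l^\epsilon\bigl(Y_t^{i,\epsilon}-Y_t^{j,\epsilon}\bigr),
\]
and Assumption~\ref{ass: loc_lip_bound} imposes no continuity on $l^\epsilon$ --- in the bounded-confidence example of Lemma~\ref{lemma: local_lipschitz_bound_of_K_N} the function $l^\epsilon$ is piecewise constant with jumps of size $\epsilon^{-1}$. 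The ``rescue'' you invoke, Assumption~\ref{ass: loc_lip_bound}(ii), only bounds the convolution $l^\epsilon*\rho_t^\epsilon=\overline{L_{t,i}^\epsilon}$, i.e.\ the conditional expectation; it says nothing about the random sum $L_i^\epsilon(\mathbf Y_t^{N,\epsilon})$ between grid points. One can try to repair this --- replace $l^\epsilon$ by a smooth majorant with Lipschitz constant $O(\epsilon^{-2})$ and refine the mesh accordingly, or apply the law of large numbers directly to the time-integrated variables $\int_0^T l^\epsilon(Y_s^{i,\epsilon}-Y_s^{j,\epsilon})\Id s$ and use the integral form of Gronwall --- but as written the step does not close. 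The paper's expectation/Dini-derivative device sidesteps the issue entirely: it trades pathwise control for a $t$-by-$t$ estimate inside the time integral, which is exactly what Corollary~\ref{cor: law_of_large_numbers} delivers.
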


To prove Theorem~\ref{theorem: main_theorem}, we need the following auxiliary lemma.

\begin{lemma}\label{lemma: sup_dini_derivative_bound}\textup{\cite[Lemma~8.1]{lazarovici2017mean}}
  For a function \(f\colon\R \to \R\) we denote the right upper Dini derivative by
  \begin{equation*}
    \overline{D}_t^+f(y) := \limsup\limits_{h \to 0^+} \frac{f(y+h)-f(x)}{h}.
  \end{equation*}
  Let \(g \in C^1(\R)\) and \(h(y) := \sup\limits_{0 \le s \le y } g(s)\). Then, one has \(\overline{D}_t^+ h(y) \leq \max\left( 0, \frac{\d}{\d t} g(y)  \right)\) for all \(y\geq 0\).
\end{lemma}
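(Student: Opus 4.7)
The plan is to split the argument according to whether the running supremum is strictly larger than the current value or equal to it. Since $g$ is continuous, $h(y) = \sup_{0 \le s \le y} g(s) \ge g(y)$ always, and $h$ is non-decreasing, so $\overline{D}^+ h(y) \ge 0$. The two natural cases are $h(y) > g(y)$ and $h(y) = g(y)$.

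In the first case, $h(y) > g(y)$, continuity of $g$ yields a $\delta_0 > 0$ such that $g(s) < h(y)$ for all $s \in [y, y + \delta_0]$. Consequently, for every $\delta \in (0, \delta_0]$ we have $h(y+\delta) = \max\bigl(h(y), \sup_{s \in [y, y+\delta]} g(s)\bigr) = h(y)$, which immediately gives $\overline{D}^+ h(y) = 0 \le \max(0, g'(y))$.

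In the second case, $h(y) = g(y)$, I would write, for $\delta > 0$,
\begin{equation*}
  h(y+\delta) - h(y) = \sup_{s \in [y, y+\delta]} g(s) - g(y) = \sup_{s \in [y, y+\delta]} \bigl( g(s) - g(y) \bigr).
\end{equation*}
Using $g(s) - g(y) = \int_y^s g'(t)\,\d t \le (s-y) \sup_{t \in [y, y+\delta]} g'(t)$, I get
\begin{equation*}
  h(y+\delta) - h(y) \le \delta \cdot \max\!\Bigl(0, \sup_{t \in [y, y+\delta]} g'(t)\Bigr),
\end{equation*}
where the outer $\max$ with $0$ comes from the option $s = y$ in the supremum. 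Dividing by $\delta$ and passing to the limit superior as $\delta \to 0^+$, the continuity of $g'$ at $y$ yields $\sup_{t \in [y, y+\delta]} g'(t) \to g'(y)$, and the $\max(\cdot, 0)$ is continuous, so the right-hand side tends to $\max(0, g'(y))$.

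I do not expect any genuine obstacle here; the only delicate point is the second case, where one must be careful that the supremum $\sup_{s \in [y, y+\delta]}[g(s) - g(y)]$ is genuinely controlled by $\delta \max(0, g'(y) + o(1))$ rather than by a bound involving $|g'(y)|$. Using the integral representation of $g(s) - g(y)$ (rather than the bare mean value theorem applied to $|g(s) - g(y)|$) sidesteps this and gives the sharp bound with the required $\max(0, \cdot)$.
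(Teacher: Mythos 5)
The paper does not prove this lemma---it cites it verbatim from \cite[Lemma~8.1]{lazarovici2017mean}---so there is no in-text argument to compare yours against. Your proof is correct and complete, and you correctly read through the typographical slips in the statement (the numerator should be $f(y+h)-f(y)$, and $\frac{\d}{\d t}g(y)$ means $g'(y)$). The split into $h(y)>g(y)$, where $h$ is locally constant to the right of $y$ so $\overline{D}^+h(y)=0$, and $h(y)=g(y)$ is the natural one, and in the second case you handle the one subtle point correctly: writing
$h(y+\delta)-h(y)=\sup_{s\in[y,y+\delta]}\bigl(g(s)-g(y)\bigr)$, bounding $g(s)-g(y)=\int_y^s g'(t)\Id t\le(s-y)\sup_{t\in[y,y+\delta]}g'(t)$, and observing that the choice $s=y$ forces the nonnegative cap, so that $h(y+\delta)-h(y)\le\delta\max\bigl(0,\sup_{t\in[y,y+\delta]}g'(t)\bigr)$, after which continuity of $g'$ yields the claim as $\delta\to0^+$. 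As a cosmetic remark, the case split can be avoided: since always $h(y)\ge g(y)$, one has directly
\begin{equation*}
  h(y+\delta)-h(y)=\max\Bigl(0,\sup_{s\in[y,y+\delta]}g(s)-h(y)\Bigr)\le\sup_{s\in[y,y+\delta]}\bigl(g(s)-g(y)\bigr),
\end{equation*}
and your second-case estimate then applies uniformly; but this changes nothing of substance.
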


\begin{proof}[Proof of Theorem~\ref{theorem: main_theorem}]
  For \(T>0\) and \(\alpha \in (0,1/2)\) and \(\delta = \frac{1}{2} (1/2 - \alpha) > 0\) let us define the auxiliary process
  \begin{equation*}
    J_t^N := \min \left( 1, \sup\limits_{0 \le s \le t } e^{\lambda(T-s)} (N^\alpha
    |\mathbf{X}_s^{N,\epsilon} -\mathbf{Y}_s^{N,\epsilon}|_\infty + N^{-\delta}) \right),
  \end{equation*}
  where \(\lambda>0\) is a constant, which will be defined later. In the first step we want to understand how \(J_t^N\) helps us to control the maximum distance \(|\mathbf{X}_t^{N,\epsilon} -\mathbf{Y}_t^{N,\epsilon}|_\infty\). For \(0 \le t \le T \) we have
  \begin{align}\label{eq: sup_N_alpha_z_less_than_1}
    \begin{split}
    \sup\limits_{0 \le s \le t} N^\alpha |\mathbf{X}_s^{N,\epsilon} -\mathbf{Y}_s^{N,\epsilon}|_\infty
    & \le \sup\limits_{0 \le s \le t} e^{\lambda (T-s)} (N^\alpha
    |\mathbf{X}_s^{N,\epsilon} -\mathbf{Y}_s^{N,\epsilon}|_\infty + N^{-\delta}).
    \end{split}
  \end{align}
  Hence, if \(J_t^N < 1 \) we obtain \(\sup\limits_{0 \le s \le t} N^\alpha |\mathbf{X}_s^{N,\epsilon} -\mathbf{Y}_s^{N,\epsilon}|_\infty \le  J_t^N < 1\). Furthermore, we can assume \(N\ge N_0\) such that \( J_o^N= e^{\lambda (T-s)} N^{-\delta} < \frac{1}{2} \) with \(N_0\) depending on \(T, \lambda\). As a result, we find
  \begin{align}\label{eq: J_control}
    \P\left(\sup\limits_{ t \in [0,T]} |\mathbf{X}_t^{N, \epsilon}-\mathbf{Y}_t^{N,\epsilon}| \ge N^{-\alpha}\right)
    &\le \P(J_T^N \ge 1)  \le \P\left( J_T^N - J_0^N \ge  \frac{1}{2} \right)\nonumber \\
    &\le 2\E(J_T^N - J_0^N)
    \le 2 \E \left( \int\limits_0^T  \overline{D}^+_t J_t^N \Id t \right)\\
    & = 2  \int\limits_0^T  \E( \overline{D}^+_t J_t^N )  \Id t, \nonumber
  \end{align}
  where we used a more general fundamental theorem of calculus, see e.g. \cite[Theorem~11]{Hagooddini2006}, in the last inequality. In the next step we want to estimate the Dini derivative \(\overline{D}^+_t J_t^N\). Applying Lemma~\ref{lemma: sup_dini_derivative_bound}, we discover
  \begin{equation}\label{eq: dini_derivative_J_inequality}
    \overline{D}^+_t J_t^N \le \max \left( 0,\frac{\d}{\d t} g(t) \right)
  \end{equation}
  with \(g(t):=  e^{\lambda(T-t)} (N^\alpha |\mathbf{X}_t^{N,\epsilon} -\mathbf{Y}_t^{N,\epsilon}|_\infty + N^\delta)\). Using the fact \( \partial^+ \max(h,l) = \max(\partial^+ h , \partial^+ l)\) for right-differentiable functions \(h\) and \(l\), where \(\partial^+\) denotes the right-derivative, we find
  \begin{equation}\label{eq: representation_of_the_derivative_g}
    \frac{\d}{\d t} g(t)
    = -\lambda  e^{\lambda(T-t)} (N^\alpha |\mathbf{X}_t^{N, \epsilon} -\mathbf{Y}_t^{N,\epsilon}|_\infty + N^{-\delta})  +
    e^{\lambda(T-t)} N^\alpha |K^\epsilon(\mathbf{X}_t^{N, \epsilon}) - \overline{K}_t^\epsilon(\mathbf{Y}_t^{N,\epsilon}) |_\infty
  \end{equation}
  with \(K^\epsilon\) and \(\overline{K}_t^\epsilon\) defined as in \eqref{eq: K_N} and \eqref{eq: averaged_K_N}, respectively. Next, let us introduce the set \(A_t:= \{\overline{D}^+_t J_t^N >0\}\) and notice that \eqref{eq: dini_derivative_J_inequality} implies \(A_t \subseteq \{\overline{D}^+_t J_t^N  \le \frac{\d}{\d t} g(t) \}\). Hence, we discover
  \begin{equation*}
    \E( \overline{D}^+_t J_t^N )
    = \E(\overline{D}^+_t J_t^N \indicator{A_t}) + \E(\overline{D}^+_t J_t^N \indicator{A_t^{\mathrm{c}}} )
    \le \E \left(\frac{\d}{\d t} g(t)  \indicator{A_t} \right).
  \end{equation*}
  In combination with \eqref{eq: J_control} we see that, in order to prove the theorem, it is enough to show that \( \E(\frac{\d}{\d t} g(t) \indicator{A_t})\) is bounded by \(C(\gamma) N^{-\gamma}\) for some constant \(C(\gamma) > 0\) and \( t \in [0,T]\).

  At this moment let us recall the sets \(B_t^1,B_t^2\) from Section~\ref{sec: law_of_large_numbers}, where the ``good'' properties hold to further reduce the problem. We have
  \begin{align*}
    \E \left(\frac{\d}{\d t} g(t)  \indicator{A_t} \right)
    &= \E \left(\frac{\d}{\d t} g(t)  \indicator{A_t \cap B_t^1 \cap B_t^2} \right) + \E \left(\frac{\d}{\d t} g(t)  \indicator{A_t \cap (B_t^1 \cap B_t^2)^{\mathrm{c}}} \right)   \\
    &\le\E \left(\frac{\d}{\d t} g(t)  \indicator{A_t \cap B_t^1 \cap B_t^2} \right) + C N^\alpha (\P((B_t^1)^{\mathrm{c}}) + \P((B_t^2)^{\mathrm{c}})) \\
    &\le \E \left(\frac{\d}{\d t} g(t \indicator{A_t \cap B_t^1 \cap B_t^2} \right) + C(\gamma) N^{-\gamma},
  \end{align*}
  where we used the fact that the interaction force approximation \(k^\epsilon\) is uniformly bounded in the first inequality and thus we have \(|\frac{\d}{\d t} g(t)| \le C N^\alpha\) with the help of \eqref{eq: representation_of_the_derivative_g}. The last inequality follows immediately from Corollary~\ref{cor: law_of_large_numbers} and relabeling \(\gamma\). It is therefore enough to prove that \(\frac{\d}{\d t} g(t) \le 0 \) holds under the event \(A_t \cap B_t^1 \cap B_t^2\). This is equivalent to the inequality
  \begin{equation}\label{eq: equivalent_derivative_vanishing}
    e^{\lambda(T-t)} N^\alpha |K^\epsilon(\mathbf{X}_t^{N , \epsilon}) - \overline{K}_t^\epsilon(\mathbf{Y}_t^{N,\epsilon}) |_\infty
    \le \lambda  e^{\lambda(T-t)} (N^\alpha |\mathbf{X}_t^{N, \epsilon} -\mathbf{Y}_t^{N,\epsilon}|_\infty + N^{-\delta}) .
  \end{equation}
  We observe that on \(A_t\) we have \(J_t^N < 1\). Together with \eqref{eq: sup_N_alpha_z_less_than_1} this means
  \begin{equation}\label{eq: trajectories_close_under_a_t}
    \sup\limits_{0 \le s \le t} |\mathbf{X}_s^{N , \epsilon}-\mathbf{Y}_s^{N,\epsilon}|_\infty \le N^{-\alpha}
  \end{equation}
  holds on \(A_t\). Splitting up the term on the left-hand side of \eqref{eq: equivalent_derivative_vanishing}, we obtain
  \begin{align*}
    |K^\epsilon (\mathbf{X}_t^{N, \epsilon}) - \overline{K}_t^\epsilon(\mathbf{Y}_t^{N,\epsilon}) |_\infty
    &\le |K^\epsilon(\mathbf{X}_t^{N, \epsilon}) - K^\epsilon(\mathbf{Y}_t^{N,\epsilon})|_\infty +| K^\epsilon(\mathbf{Y}_t^{N,\epsilon})-\overline{K}_t^\epsilon(\mathbf{Y}_t^{N,\epsilon}) |_\infty \\
    &\le |L^\epsilon(\mathbf{Y}_t^{N,\epsilon})|_\infty |\mathbf{X}_t^{N, \epsilon }-\mathbf{Y}_t^{N,\epsilon}|_\infty + N^{-(\delta+\alpha)}  \\
    &\le (C+| \overline{L_{t}^\epsilon}(\mathbf{Y}_t^{N,\epsilon})|_\infty ) |\mathbf{X}_t^{N,\epsilon} -\mathbf{Y}_t^{N,\epsilon}|_\infty +N^{-(\delta+\alpha)}  \\
    &\le C(\rho_0,T) ( |\mathbf{X}_t^{N,\epsilon} -\mathbf{Y}_t^{N,\epsilon}|_\infty +N^{-(\delta+\alpha)} ),
  \end{align*}
  where we used the local Lipschitz bound from Assumption~\ref{ass: loc_lip_bound}, inequality~\eqref{eq: trajectories_close_under_a_t} and the condition of event \(B_t^1\) in the second inequality. Then, we applied the condition of \(B_t^2\) in the third inequality and finally Assumption~\ref{ass: loc_lip_bound} in the last inequality. Inserting this back into the left-hand side of \eqref{eq: equivalent_derivative_vanishing}, we discover
  \begin{align*}
    e^{\lambda(T-t)} N^\alpha |K^\epsilon(\mathbf{X}_t^{N,\epsilon}) - \overline{K}_t^\epsilon(\mathbf{Y}_t^{N,\epsilon}) |_\infty
    &\le e^{\lambda(T-t)} N^\alpha C(\rho_0,T) ( |\mathbf{X}_t^{N, \epsilon} -\mathbf{Y}_t^{N,\epsilon}|_\infty +N^{-(\delta+\alpha)} ) \\
    &=  C(\rho_0,T) e^{\lambda(T-t)} (N^\alpha |\mathbf{X}_t^{N, \epsilon} -\mathbf{Y}_t^{N,\epsilon}|_\infty +N^{-\delta}).
  \end{align*}
  Choosing \(\lambda=  C(\rho_0,T)\) provides \eqref{eq: equivalent_derivative_vanishing} and concludes the proof.
\end{proof}

\begin{remark}
  The cut-off \(\alpha \in (0,1/2)\) was only used in Corollary~\ref{cor: law_of_large_numbers} to bound the set \(B_t^2\). Hence, one possibility on improving the cut-off is to optimize  Proposition~\ref{prop: law_of_large_numbers} in order to handle more general cut-off functions.
\end{remark}

From Theorem~\ref{theorem: main_theorem} it immediately follows that the marginals of \(X_t^N\) and \(Y_t^N\) converge in the Wasserstein metric, see e.g. \cite[Corollary~2.2]{canizares2017stochastic}. For the sake of completeness we include the statement below.

\begin{corollary}\textup{\cite[Corollary~2.2.]{canizares2017stochastic}}\label{cor: mean_field_limit_for_regularized_trajectories}
  Let the assumptions of Theorem~\ref{theorem: main_theorem} hold. Consider the probability density \(  \rho_t^{\otimes N,\epsilon} \) of \(\mathbf{Y}_t^{N,\epsilon}\) and \(\rho_t^{N,\epsilon} \) the probability density of \(\mathbf{X}_t^{N,\epsilon}\). Then, \(\rho_t^{N,k,\epsilon}\) converges weakly (in the sense of measures) to \( \rho_t^{\otimes k ,\epsilon}\) as \(N \to \infty\), \(\epsilon(N) \to 0\) for each fixed \(k \ge 1 \). Furthermore, the probability density \(\rho_t^{N,\epsilon}\) converges weakly (in the sense of measures) to the same measure as \(  \rho_t^{\otimes N, \epsilon}\) as \(N \to \infty\). More precisely, there exists a positive constant \(C\) and \(N_0 \in \N\) such that
  \begin{equation*}
    \sup\limits_{t\in [0,T]} W_1( \rho_t^{N,k,\epsilon} , \rho_t^{\otimes k, \epsilon} ) ,  \,
    \sup\limits_{t\in [0,T]} W_1(\rho_t^{N,\epsilon} , \, \rho_t^{\otimes N, \epsilon})  \le C(\rho_0,T,\alpha) N^{-\alpha}
  \end{equation*}
  holds for each \(k \ge 1 \) and \(N \ge N_0\), where \(W_1\) denotes the Wasserstein metric
  \begin{equation*}
    W_1(\mu,\nu) := \inf\limits_{\pi \in \Pi(\mu,\nu)} \int_{ \R \times \R} \frac{1}{k} \sum\limits_{i=1}^k |x^{i}-y^{i}| \Id \pi (x,y)
  \end{equation*}
  and \(\Pi(\mu,\nu)\) is the set of all probability measures on \(\R \times \R\) with first marginal \(\mu \) and second marginal \(\nu\). The constant \(C(\rho_0,T,\alpha)\) depends on the initial condition \(\rho_0\), the final time \(T\) and \(\alpha\).  Moreover, \(N_0 \in \N\) is the same as in Theorem~\ref{theorem: main_theorem}.
\end{corollary}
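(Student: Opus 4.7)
My plan is to exploit the synchronous coupling already built into the statements of Theorem~\ref{theorem: main_theorem}: the particle system $\mathbf{X}^{N,\epsilon}$ and the i.i.d.~mean-field system $\mathbf{Y}^{N,\epsilon}$ are defined on the same probability space, driven by the same Brownian motions, and started from the same initial data. Their joint law therefore provides an explicit coupling between $\rho_t^{N,\epsilon}$ and $\rho_t^{\otimes N, \epsilon}$, and its projection to the first $k$ coordinates couples $\rho_t^{N,k,\epsilon}$ and $\rho_t^{\otimes k, \epsilon}$. By the primal definition of $W_1$ this yields, for every $k\geq 1$ and $t\in[0,T]$,
\begin{equation*}
  W_1\bigl(\rho_t^{N,k,\epsilon},\rho_t^{\otimes k,\epsilon}\bigr)
  \leq \E\Bigl[\tfrac{1}{k}\sum_{i=1}^k |X_t^{i,\epsilon}-Y_t^{i,\epsilon}|\Bigr]
  \leq \E\Bigl[\sup_{s\in[0,T]}|\mathbf{X}_s^{N,\epsilon}-\mathbf{Y}_s^{N,\epsilon}|_\infty\Bigr],
\end{equation*}
and the same bound applies, with a $\tfrac{1}{N}\sum_{i=1}^N$ in place of $\tfrac{1}{k}\sum_{i=1}^k$, to $W_1(\rho_t^{N,\epsilon},\rho_t^{\otimes N,\epsilon})$. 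Thus it suffices to control the right-hand side by $C(\rho_0,T,\alpha)N^{-\alpha}$ uniformly in $t\in[0,T]$.

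Next I would split this expectation using the ``good'' event $G_N:=\{\sup_{s\in[0,T]}|\mathbf{X}_s^{N,\epsilon}-\mathbf{Y}_s^{N,\epsilon}|_\infty < N^{-\alpha}\}$ from Theorem~\ref{theorem: main_theorem}. On $G_N$ the integrand is trivially bounded by $N^{-\alpha}$, so
\begin{equation*}
  \E\Bigl[\sup_{s\in[0,T]}|\mathbf{X}_s^{N,\epsilon}-\mathbf{Y}_s^{N,\epsilon}|_\infty\Bigr]
  \leq N^{-\alpha}
  + \E\Bigl[\sup_{s\in[0,T]}|\mathbf{X}_s^{N,\epsilon}-\mathbf{Y}_s^{N,\epsilon}|_\infty \indicator{G_N^{\mathrm{c}}}\Bigr].
\end{equation*}
The second term I would handle by Hölder's inequality with some large exponent $p>1$ and its conjugate $q$: it is bounded by $\bigl(\E[\sup_{s\leq T}|\mathbf{X}^{N,\epsilon}_s-\mathbf{Y}^{N,\epsilon}_s|_\infty^p]\bigr)^{1/p}\,\P(G_N^{\mathrm{c}})^{1/q}$.

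The remaining ingredient is a polynomial-in-$N$ moment bound on $\sup_s|\mathbf{X}^{N,\epsilon}_s-\mathbf{Y}^{N,\epsilon}_s|_\infty$. Since $\|k^\epsilon\|_{L^\infty(\R)}\le C\|k\|_{L^\infty(\R)}$ uniformly in $\epsilon$, each coordinate of $\mathbf{X}^{N,\epsilon}$ and $\mathbf{Y}^{N,\epsilon}$ is a diffusion with uniformly bounded drift and constant volatility $\sigma$, so standard BDG/Grönwall estimates give $\E[\sup_{s\leq T}|X^{i,\epsilon}_s|^p]+\E[\sup_{s\leq T}|Y^{i,\epsilon}_s|^p]\leq C(p,T)(1+\E|X_0^i|^p)$. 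Using $|\cdot|_\infty\leq(\sum_i|\cdot|^p)^{1/p}$ coordinatewise and the finite $p$-th moment of $\rho_0$ (available from Assumption~\ref{ass: initial condition} for $p=1$, extendable to any $p$ once we note that the drift is bounded so the moment problem reduces to that of the initial law plus Brownian motion), one obtains a bound of the form $C(p,T)N$. Plugging everything in yields
\begin{equation*}
  \E\Bigl[\sup_{s\in[0,T]}|\mathbf{X}_s^{N,\epsilon}-\mathbf{Y}_s^{N,\epsilon}|_\infty\Bigr]
  \leq N^{-\alpha} + C(p,T)^{1/p} N^{1/p}\bigl(C(\gamma)N^{-\gamma}\bigr)^{1/q},
\end{equation*}
and it remains to choose $\gamma$ and $p$ (both depending only on $\alpha$) so that $1/p+\gamma/q\geq \alpha$ with strict inequality, for instance $p=2$ and $\gamma$ so that $\gamma/2\geq \alpha+1/2$. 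Taking the supremum over $t\in[0,T]$ on the left (the right-hand side is already $t$-independent) finishes both claims of the corollary.

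The main obstacle I anticipate is the moment bound on $\sup_{s\leq T}|\mathbf{X}_s^{N,\epsilon}-\mathbf{Y}_s^{N,\epsilon}|_\infty$ if $\rho_0$ is only assumed to have a first moment; one may need either to use $p$ close to $1$ (which forces a larger $\gamma$, but this is fine since Theorem~\ref{theorem: main_theorem} delivers any algebraic rate) or to observe that, because the drifts are bounded, the increments $X^{i,\epsilon}_t-X^i_0$ and $Y^{i,\epsilon}_t-Y^i_0$ have Gaussian tails and hence arbitrary polynomial moments independently of the initial condition, reducing the whole bound to $\E[|X_0^i|]<\infty$. Either variant closes the argument.
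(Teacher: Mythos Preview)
Your approach is correct and is exactly the standard coupling argument the paper alludes to; note that the paper does not actually prove this corollary but merely cites \cite[Corollary~2.2]{canizares2017stochastic}, so your write-up is essentially a reconstruction of that reference.

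One simplification removes the ``main obstacle'' you worry about at the end. Since $\mathbf{X}^{N,\epsilon}$ and $\mathbf{Y}^{N,\epsilon}$ are driven by the \emph{same} Brownian motions and start from the \emph{same} initial data, the stochastic integrals and initial conditions cancel in the difference:
\[
  X_t^{i,\epsilon}-Y_t^{i,\epsilon}
  =\int_0^t\Bigl(K_i^\epsilon(\mathbf{X}_s^{N,\epsilon})+(k^\epsilon*\rho_s^\epsilon)(Y_s^{i,\epsilon})\Bigr)\Id s,
\]
and both terms in the integrand are bounded by $C\norm{k}_{L^\infty(\R)}$ uniformly in $\epsilon$ (Assumption~\ref{ass: kernel_smooth_convergence} and \eqref{eq: l_infinity_bound_of_interaction_force}). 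Hence
\[
  \sup_{s\in[0,T]}|\mathbf{X}_s^{N,\epsilon}-\mathbf{Y}_s^{N,\epsilon}|_\infty
  \le 2C\norm{k}_{L^\infty(\R)}T
\]
holds \emph{deterministically}. This lets you replace the H\"older/moment step by the trivial bound
\[
  \E\Bigl[\sup_{s\in[0,T]}|\mathbf{X}_s^{N,\epsilon}-\mathbf{Y}_s^{N,\epsilon}|_\infty\,\indicator{G_N^{\mathrm{c}}}\Bigr]
  \le 2C\norm{k}_{L^\infty(\R)}T\,\P(G_N^{\mathrm{c}})
  \le C(\gamma)N^{-\gamma},
\]
and choosing $\gamma=\alpha$ already suffices. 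No appeal to BDG, Gaussian tails, or higher moments of $\rho_0$ is needed.
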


Corollary~\ref{cor: mean_field_limit_for_regularized_trajectories} implies the weak convergence in the sense of measures of the \(k\)-th marginal \(\rho_t^{N,k,\epsilon} \) to the product measure \( \rho_t^{\otimes k} \). Indeed, since \(\rho_t^{N,k,\epsilon}\) converges weakly to \(\rho_t^{\otimes k, \epsilon} \), it is sufficient to show that \( \rho_t^{\otimes k , \epsilon}\) converges weakly to \( \rho_t^{\otimes k }\). By the classic result \cite[Proposition~2.2]{snitzman_propagation_of_chaos} we can consider the special case \(k=2\), i.e \(\rho_t^{\epsilon} \otimes \rho_t^{\epsilon} \) converges weakly to \( \rho_t  \otimes \rho_t\). We can further reduce it by applying \cite[Theorem~2.8]{billingsley2013convergence}, which tells us that it is enough to show \(\rho_t^\epsilon\) converges weakly to \(\rho_t\).

\begin{lemma}\label{lemma: weak_comvergence_regularized_mean_to_mean}
  Let \(T>0\) and suppose Assumption~\ref{ass: initial condition}. Moreover, let \((\rho^\epsilon, \epsilon > 0)\) and \(\rho\) be as in Corollary~\ref{cor: mean_field_limit_for_regularized_trajectories}. Then, one has
  \begin{equation}\label{eq: weak_comvergence_regularized_mean_to_mean}
    \sup_{t \in [0,T]} \left| \int_\R (\rho_t^\epsilon(x) -\rho_t(x)) \phi (x) \Id x  \right|
    \xrightarrow[ \epsilon \to 0]{} 0
  \end{equation}
  for all \(\phi \in L^\infty(\R)\). In particular, \(\rho_t^{\epsilon} \otimes \rho_t^{\epsilon} \) converges weakly to \( \rho_t  \otimes \rho_t\) for all \(t \ge 0\) in the sense of measures.
\end{lemma}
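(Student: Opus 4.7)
The plan is to first reduce the statement for general $\phi \in L^\infty(\R)$ to the case of smooth compactly supported test functions, and then, for such $\phi$, combine the equicontinuity in $t$ coming from the weak form of~\eqref{eq: regularized_aggregation_diffusion_pde} with the strong $L^1([0,T];L^1(\R))$-convergence $\rho^{\epsilon_m}\to\rho$ that was in fact established inside the proof of Theorem~\ref{theorem: existence_solution} via Lemma~\ref{lemma: aubin_lion_space}.

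For the reduction I would fix $\phi \in L^\infty(\R)$ and first cut off the tails: Lemma~\ref{lemma: uniform_bound_x2} combined with Chebyshev's inequality gives $\int_{|x|>R}(\rho^\epsilon_t + \rho_t)\Id x \le C/R$ uniformly in $\epsilon>0$ and $t\in[0,T]$, so the contribution of $\phi\indicator{\{|x|>R\}}$ to the difference is at most $O(\|\phi\|_{L^\infty(\R)}/R)$. The resulting bounded, compactly supported truncation $\phi\indicator{[-R,R]}$ can then be approximated in $L^1(\R)$ by $\phi_n \in \testfunctions{\R}$, and by the uniform $L^\infty$-bound on $\rho^\epsilon,\rho$ from Lemma~\ref{lemma: infinity_bound_solution} the induced error is controlled by $2C(\rho_0)\,\|\phi\indicator{[-R,R]} - \phi_n\|_{L^1(\R)}$, uniformly in $\epsilon$ and $t$. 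It therefore suffices to prove~\eqref{eq: weak_comvergence_regularized_mean_to_mean} for $\phi \in \testfunctions{\R}$.

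For such $\phi$, I would test the weak formulation~\eqref{eq: regularized_weak_solution} against $\psi(t)\phi(x)$ with $\psi \in \testfunctions{(0,T)}$ and integrate once by parts in $x$ to obtain, in the sense of distributions in $t$,
\[
\frac{\d}{\d t}\int_\R \rho^\epsilon_t \phi \Id x
= \frac{\sigma^2}{2}\int_\R \rho^\epsilon_t \phi_{xx} \Id x - \int_\R (k^\epsilon*\rho^\epsilon_t)\,\rho^\epsilon_t \phi_x \Id x.
\]
By mass conservation and the uniform bound~\eqref{eq: l_infinity_bound_of_interaction_force} on $k^\epsilon*\rho^\epsilon$, the right-hand side is bounded by some constant $C(\phi)$ independent of $\epsilon$ and $t$. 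Consequently the family $F^\epsilon:t\mapsto \int_\R \rho^\epsilon_t\phi\Id x$ is uniformly bounded and uniformly Lipschitz on $[0,T]$, hence, by Arzel\`a--Ascoli, relatively compact in $C([0,T])$.

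To identify the limit I would invoke the strong convergence $\rho^{\epsilon_m}\to\rho$ in $L^1([0,T];L^1(\R))$ established in the proof of Theorem~\ref{theorem: existence_solution}; this immediately yields $F^{\epsilon_m}\to F$ in $L^1([0,T])$, where $F(t):=\int_\R \rho_t\phi\Id x$ is itself continuous because $\rho\in C([0,T];L^2(\R))$. Any uniform subsequential limit of $F^\epsilon$ must then coincide with $F$ almost everywhere and, by continuity, everywhere; the uniqueness recorded in the Remark after Theorem~\ref{theorem: existence_solution} promotes this to convergence of the entire family $F^\epsilon\to F$ in $C([0,T])$, which is precisely~\eqref{eq: weak_comvergence_regularized_mean_to_mean}. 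The tensor-product statement would then follow from the factorization $\int \phi_1(x)\phi_2(y)\rho^\epsilon_t(x)\rho^\epsilon_t(y)\Id x\Id y = F^\epsilon_{\phi_1}(t)\,F^\epsilon_{\phi_2}(t)$, the just proved convergence of each factor for $\phi_i\in C_b(\R)\subset L^\infty(\R)$, and a standard Stone--Weierstrass density argument to pass from tensor products to arbitrary $C_b(\R\times\R)$ test functions. The only slightly delicate point I anticipate is securing the uniform-in-$\epsilon$ equicontinuity through the nonlinear convolution term, which however is exactly what the elementary bound~\eqref{eq: l_infinity_bound_of_interaction_force} delivers.
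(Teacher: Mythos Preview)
Your proof is correct and follows the same skeleton as the paper's: tail cut-off via Lemma~\ref{lemma: uniform_bound_x2}, equicontinuity of $t\mapsto\int\rho^\epsilon_t\phi$ for smooth $\phi$, Arzel\`a--Ascoli, identification of the limit, and promotion to full-sequence convergence by uniqueness. The technical choices differ slightly. For the equicontinuity, you test the PDE directly and use mass conservation plus~\eqref{eq: l_infinity_bound_of_interaction_force} to obtain a Lipschitz bound, whereas the paper invokes the uniform $L^2([0,T];H^{-1}(\R))$-bound on $\frac{\d}{\d t}\rho^\epsilon$ from~\eqref{eq: regularized_solutions_estimate} to get H\"older-$\tfrac12$ continuity. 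For the reduction, you go from $L^\infty$ to $C_c^\infty$ in one sweep by approximating $\phi\indicator{[-R,R]}$ in $L^1(\R)$ and absorbing the error with the $L^\infty$-bound on $\rho^\epsilon$ from Lemma~\ref{lemma: infinity_bound_solution}; the paper instead extends the convergence from $C_c^\infty$ to $H^1$ to $L^2$ test functions using only the $L^\infty([0,T];L^2(\R))$-bound, and then observes that $\phi\indicator{\{|x|\le R\}}\in L^2(\R)$. For the limit identification you use the strong $L^1([0,T];L^1(\R))$-convergence extracted in the proof of Theorem~\ref{theorem: existence_solution}, while the paper uses only the weak $L^2([0,T];L^2(\R))$-convergence. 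Your route is a bit more direct, at the cost of invoking the stronger Moser-type bound of Lemma~\ref{lemma: infinity_bound_solution}; the paper's version stays entirely at the $L^2$-level for this lemma.
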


\begin{remark}
  Suppose the assumptions of Theorem~\ref{theorem: main_theorem} hold. The Lemma~\ref{lemma: weak_comvergence_regularized_mean_to_mean} together with the discussion before Lemma~\ref{lemma: uniform_bound_x2} and \cite[Proposition~2.2]{snitzman_propagation_of_chaos} imply that, for all \(t \in [0,T]\),
  \begin{equation*}
    \lim_{N \to \infty} \frac{1}{N} \sum\limits_{i =1}^N \delta_{X_t^{i,\epsilon}} = \rho_t
  \end{equation*}
  in law as measure valued random variables if \(\epsilon \sim N^{-\beta}\).
\end{remark}

\begin{proof}[Proof of Lemma~\ref{lemma: weak_comvergence_regularized_mean_to_mean}]
  First, we notice that the convergence is uniform in time. Therefore, the strong convergence result from Lemma~\ref{lemma: aubin_lion_space} cannot be applied. We start by showing \eqref{eq: weak_comvergence_regularized_mean_to_mean} holds for \(\phi \in H^1(\R)\). To that end, let us assume \(\phi\) is in a dense subset and smooth enough, i.e. \(\phi \in \testfunctions{\R}\).  Now, let \( 0 \le t_1 < t_2 \le T \). Then, the uniform bound on \(\frac{\d}{\d t} \rho^\epsilon_t\) (see \eqref{eq: regularized_solutions_estimate}) and integration by parts \cite[Theorem~23.23]{ZeidlerEberhard1990Nfaa} implies
  \begin{align*}
    \left| \int_\R \rho^\epsilon(t_1,x) \phi (x) \Id x  - \int_\R \rho^\epsilon(t_2,x) \phi (x) \Id x \right|
    &= \left| \int\limits_{t_1}^{t_2} \left \langle\frac{\d}{\d t} \rho^\epsilon_t, \phi \right \rangle_{H^{-1}(\R),H^1(\R)} \Id t  \right|  \\
    &\le |t_2-t_1|^{1/2} \norm{\frac{\d}{\d t} \rho^\epsilon}_{L^2([0,T];H^{-1}(\R))} \norm{\phi}_{H^{1}(\R)} \\
    &\le  C |t_2-t_1|^{1/2} \norm{\phi}_{H^{1}(\R)} .
  \end{align*}
  Consequently, the sequence of function \(t \mapsto \int_\R \rho_t^\epsilon(x) \phi(x) \Id x \) is equicontinuous. Using the \(L^\infty([0,T];L^2(\R))\)-bound, we also get a uniform bound on the sequence. As a result, we can apply the Arzela--Ascoli theorem to obtain a convergent subsequence, which depends on \(\phi\) and will be denoted by \((\rho^{\epsilon(\phi)}, \epsilon(\phi) \in \N)\) such that \(  \int_\R \rho_t^{\epsilon(\phi)} \phi \Id x \to \zeta(\phi)\) in \(C([0,T])\). By the fundamental lemma of calculus of variation and the fact that \( \rho_t^{\epsilon(\phi)} \) converges weakly in \(L^2([0,T];L^2(\R))\) we can identify the limit \(\zeta(\phi) = \int_\R \rho_t \phi \Id x \). Since \(\phi\) was taken from a dense subset of \( H^1(\R)\), we can use a diagonal argument to obtain a subsequence, which will be not renamed, such that, for \(\phi \in  H^1(\R)\),
  \begin{equation}\label{eq: weak_comvergence_regularized_mean_to_mean_in_h1}
    \sup_{t \in [0,T]} \left| \int_\R (\rho_t^{\epsilon(\phi)}(x) -\rho_t(x)) \phi (x) \Id x  \right|
    \xrightarrow[ \epsilon(\phi) \to \infty]{} 0 .
  \end{equation}
  With another density argument and the uniform bound of \((\rho^\epsilon, \epsilon \ge 0)\) in \(L^\infty([0,T];L^2(\R))\) we obtain for each \(\phi \in L^2(\R)\) a subsequence \((\rho^{\epsilon(\phi)}_t, \epsilon(\phi) \in \N)\) such that \eqref{eq: weak_comvergence_regularized_mean_to_mean_in_h1} holds. Again, since \(L^2(\R)\) is separable we can use another diagonal argument to show that we can obtain a subsequence \((\rho^{\epsilon_k}_t, k \in \N)\) such that \eqref{eq: weak_comvergence_regularized_mean_to_mean_in_h1} holds for all \(\phi \in L^2(\R)\). Notice that this subsequence is independent of the function \(\phi\). Furthermore, the uniqueness of the limit implies that \eqref{eq: weak_comvergence_regularized_mean_to_mean_in_h1} actually holds for any sequence \((\rho^{\epsilon(N)}_t, \epsilon(N) >0)\) itself, where \(\epsilon(N)\) is some sequence depending on \(N\) such that \(\epsilon(N) \to 0\) as \(N \to \infty\).

  Next, for \(\phi \in L^\infty(\R)\), we apply Lemma~\ref{lemma: uniform_bound_x2} and the fact that \(\phi(x) \indicator{\{ |x| \le R \}} \in L^2(\R)\) to find
  \begin{align*}
    &\sup_{t \in [0,T]} \left| \int_{\R} (\rho_t^{\epsilon}(x) -\rho_t(x)) \phi (x) \Id x  \right| \\
    &\quad\le \sup_{t \in [0,T]} \left| \int_{\R} (\rho_t^{\epsilon}(x) -\rho_t(x)) \phi (x) \indicator{\{ |x| \le R \}} \Id x  \right|
    +  \sup_{t \in [0,T]} \left| \int_{\R} (\rho_t^{\epsilon}(x) -\rho_t(x)) \phi (x) \indicator{\{ |x| \ge R \}} \Id x  \right|  \\
    &\quad\le \sup_{t \in [0,T]} \left| \int_{\R} (\rho_t^{\epsilon}(x) -\rho_t(x)) \phi (x) \indicator{\{ |x| \le R \}} \Id x  \right|
    + \norm{\phi}_{L^\infty(\R)} R^{-1} \sup_{t \in [0,T]}  \int_{\R} |\rho_t^{\epsilon}(x) +\rho_t(x)| |x| \Id x   \\
    &\quad\le  \sup_{t \in [0,T]} \left| \int_{\R} (\rho_t^{\epsilon}(x) -\rho_t(x)) \phi (x) \indicator{\{ |x| \le R \}} \Id x  \right|
    + C(\rho_0) \norm{\phi}_{L^\infty(\R)} R^{-1} .
  \end{align*}
  Letting \(\epsilon \to 0\) and then \(R \to \infty\), we obtain \eqref{eq: weak_comvergence_regularized_mean_to_mean} and the corollary is proven.
\end{proof}


\bibliography{quellen}
\bibliographystyle{amsalpha}

\end{document}